\pgfplotsset{compat=1.10}
\numberwithin{equation}{section}
\newcommand{\extp}{\@ifnextchar^\@extp{\@extp^{\,}}}
\def\extp^#1{\mathop{\bigwedge\nolimits^{\!#1}}}
\theoremstyle{plain}
\newtheorem{theorem}{Theorem}[section]
\newtheorem{proposition}{Proposition}[section]
\newtheorem{lemma}{Lemma}[section]
\theoremstyle{definition}
\newtheorem{remark}{Remark}[section]
\begin{document}

\title{The Rohlin invariant and $\mathbb{Z}/2$-valued invariants of homology spheres}

\author{Ricard Riba}
\address{Universitat Autònoma de Barcelona, Departament de Matemàtiques, Bellaterra, Spain}
\email{riba@mat.uab.cat}
\thanks{This work was partially supported by MEC grant MTM2016-80439-P}

\subjclass[2010]{57M27, 20J05}

\keywords{Torelli group, Luft group, Rohlin invariant, homology spheres, Heegaard splittings.}

\date{\today}


\begin{abstract}
In this paper we prove that the Rohlin invariant is the unique invariant inducing a homomorphism on the Torelli group. Using this result we generalize the construction of invariants of homology $3$-spheres from families of trivial 2-cocycles on the Torelli group given by Pitsch to include invariants with values on an abelian group with $2$-torsion.
\end{abstract}

\maketitle

\section{Introduction}

Let $\Sigma_g$ be an oriented surface of genus $g$ standardly embedded in the oriented $3$-sphere $\mathbf{S}^3$. Denote by $\Sigma_{g,1}$ the complement of the interior of a small disk embedded in $\Sigma_g$. 
The surface $\Sigma_g$ separates $\mathbf{S}^3$ into two genus $g$ handlebodies $\mathbf{S}^3=\mathcal{H}_g\cup -\mathcal{H}_g$ with opposite induced orientation.
Denote by $\mathcal{M}_{g,1}$ the mapping class group of $\Sigma_{g,1},$ ie. the group of isotopy classes of orientation-preserving diffeomorphisms of $\Sigma_g$ which are the identity on the disk modulo isotopies which again fix that small disk point-wise.
The embedding of $\Sigma_g$ in $\mathbf{S}^3$ determines three natural subgroups of $\mathcal{M}_{g,1}$: the subgroup $\mathcal{B}_{g,1}$ of mapping classes that extends to the inner handlebody
$\mathcal{H}_g$, the subgroup $\mathcal{A}_{g,1}$ of mapping classes that extends to the outer handlebody $-\mathcal{H}_g,$ and their intersection $\mathcal{AB}_{g,1}$.

By the theory of Heegaard splittings we know that any element in $\mathcal{S}^3,$ the set of diffeomorphism classes of compact, closed and oriented smooth homology $3$-spheres, can be obtained by cutting $\mathbf{S}^3$ along $\Sigma_g$ for some $g$ and glueing back the two handlebodies by some element of the Torelli group $\mathcal{T}_{g,1},$ which is the group formed by the elements of the mapping class group $\mathcal{M}_{g,1}$ that act trivially on the first homology group of the surface $\Sigma_{g}.$

In \cite{pitsch}, Pitsch proved that the lack of injectivity of this construction is controlled by the subgroups $\mathcal{TB}_{g,1}=\mathcal{T}_{g,1}\cap \mathcal{B}_{g,1},$ $\mathcal{TA}_{g,1}=\mathcal{T}_{g,1}\cap \mathcal{A}_{g,1},$ and the conjugation by elements of the group $\mathcal{AB}_{g,1}$. More precisely the injection $\Sigma_{g,1} \hookrightarrow \Sigma_{g+1,1}$ induces a natural injective stabilization map $\mathcal{T}_{g,1}\hookrightarrow \mathcal{T}_{g+1,1}$, which is compatible with the definitions of the above subgroups and one gets a well-defined bijective map:
\begin{equation}
\label{bij_S3-intro}
\lim_{g\to \infty}(\mathcal{TA}_{g,1}\backslash\mathcal{T}_{g,1}/\mathcal{TB}_{g,1})_{\mathcal{AB}_{g,1}} \xrightarrow{\sim}  \mathcal{S}^3.
\end{equation}
As a consequence of this bijection, every invariant $F$ of homology $3$-spheres with values in an arbitrary abelian group $A$ can be viewed as a family of functions $\lbrace F_g\rbrace_g$ from the Torelli groups $\mathcal{T}_{g,1}$ to the abelian group $A$ satisfying the following properties:

\begin{enumerate}[i)]
\item $F_{g+1}(x)=F_g(x) \quad \text{for every }x\in \mathcal{T}_{g,1},$
\item $F_g(\xi_a x\xi_b)=F_g(x) \quad \text{for every } x\in \mathcal{T}_{g,1},\;\xi_a\in \mathcal{TA}_{g,1},\;\xi_b\in \mathcal{TB}_{g,1},$
\item $F_g(\phi x \phi^{-1})=F_g(x)  \quad \text{for every }  x\in \mathcal{T}_{g,1}, \; \phi\in \mathcal{AB}_{g,1}.$
\end{enumerate}

Using this reformulation of invariants of homology $3$-spheres in terms of family of functions on the Torelli group, in the aforementioned paper Pitsch gave a tool to construct invariants of integral homology $3$-spheres, with values in any abelian group without $2$-torsion from a family of trivial $2$-cocycles on the Torelli group $\mathcal{T}_{g,1}$ with an $\mathcal{AB}_{g,1}$-invariant trivialization and satisfying the following conditions:

\begin{enumerate}[(1)]
\item The $2$-cocycles $\{C_g\}_g$ are compatible with the stabilization map,
\item The $2$-cocycles $\{C_g\}_g$ are invariant under conjugation by elements in $\mathcal{AB}_{g,1},$
\item If $\phi\in \mathcal{TA}_{g,1}$ or $\psi \in \mathcal{TB}_{g,1}$ then $C_g(\phi, \psi)=0.$
\end{enumerate}

In this paper we generalize this construction of invariants of homology $3$-spheres from trivial $2$-cocycles to include values in an abelian group with $2$-torsion.

The main difficulty to generalize this construction is the existence of non-zero homomorphisms on the Torelli group that satisfy the aforementioned properties i)-iii) and therefore reassemble to invariants of homology $3$-spheres.
Nevertheless we show that the homomorphisms induced by the Rohlin invariant are the unique ones with such properties.

\begin{theorem}
\label{teo-roh-intro}
Let $A$ be an abelian group and $A_2$ the subgroup of $2$-torsion elements. Provided $g\geq 4,$ the BCJ-homomorphism $\sigma: \mathcal{T}_{g,1}\rightarrow \mathfrak{B}_3$ composed with the projection $\pi_g:\mathfrak{B}_3\rightarrow\mathfrak{B}_0\cong \mathbb{Z}/2$ and the injection $\varepsilon^x:\mathfrak{B}_0\rightarrow A_2$ defined by sending $\overline{1}$ to $x,$ induces an isomorphism
\begin{align*}
\Lambda: A_2 & \longrightarrow Hom(\mathcal{T}_{g,1},A)^{\mathcal{AB}_{g,1}} \\
x & \longmapsto \mu^x_g= \varepsilon^{x}\circ \pi_g \circ\sigma.
\end{align*}
Moreover the family of homomorphisms $\{\mu_g^x\}_g$ reassemble into the Rohlin invariant.
\end{theorem}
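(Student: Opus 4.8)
\emph{Step 1: reduction to a coinvariant computation.} Since $A$ is abelian, every homomorphism $\mathcal{T}_{g,1}\to A$ is invariant under conjugation by $\mathcal{T}_{g,1}$ and factors through $H_1(\mathcal{T}_{g,1};\mathbb{Z})$; and as $\mathcal{T}_{g,1}$ acts trivially on its own abelianization, the conjugation action of $\mathcal{AB}_{g,1}$ on $H_1(\mathcal{T}_{g,1};\mathbb{Z})$ factors through the image of $\mathcal{AB}_{g,1}$ in $\mathrm{Sp}(2g,\mathbb{Z})$, which is the subgroup $G\cong\mathrm{GL}(g,\mathbb{Z})$ stabilizing the two Lagrangians $L,L^{*}$ with $H=L\oplus L^{*}$ coming from the handlebodies (see \cite{pitsch}). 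Hence
\[
\mathrm{Hom}(\mathcal{T}_{g,1},A)^{\mathcal{AB}_{g,1}}\;\cong\;\mathrm{Hom}\bigl(H_1(\mathcal{T}_{g,1};\mathbb{Z})_{G},\,A\bigr).
\]
Each $\mu^{x}_g$ is a composite of homomorphisms, and its $\mathcal{AB}_{g,1}$-invariance follows from the $\mathrm{Sp}$-equivariance of $\sigma$ together with the fact that $G$ fixes the quadratic refinement $q_g$ of the mod-$2$ intersection form determined by the embedding $\Sigma_g\subset\mathbf{S}^{3}$ (the unique one vanishing on $L$ and on $L^{*}$): indeed $\pi_g=\mathrm{ev}_{q_g}$ and $G$ preserves $L,L^{*}$. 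Injectivity of $\Lambda$ is then immediate, since $\pi_g\circ\sigma$ is onto (Johnson's surjectivity of $\sigma$ for $g\ge 3$ and $\pi_g(1)=1$), so $\mu^{x}_g=0$ forces $x=0$. It remains to identify $H_1(\mathcal{T}_{g,1};\mathbb{Z})_{G}$ with $\mathbb{Z}/2$ and the quotient map with $\pi_g\circ\sigma$.

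\emph{Step 2: passing to $\mathfrak{B}_3$.} I would next invoke Johnson's description of $H_1(\mathcal{T}_{g,1};\mathbb{Z})$ in the form of an $\mathrm{Sp}$-equivariant short exact sequence $0\to\bigwedge^{3} H\xrightarrow{\,j\,}H_1(\mathcal{T}_{g,1};\mathbb{Z})\xrightarrow{\,\sigma\,}\mathfrak{B}_3\to 0$, where $j$ identifies $\bigwedge^{3} H$ with $\ker\sigma$ via twice the Johnson homomorphism. Taking $G$-coinvariants (right exact) and using $(\bigwedge^{3} H)_{G}=0$ for $g\ge 4$ yields $H_1(\mathcal{T}_{g,1};\mathbb{Z})_{G}\cong(\mathfrak{B}_3)_{G}$, compatibly with $\sigma$. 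This is exactly where $g\ge 4$ enters: $\bigwedge^{3} H$ decomposes as $\bigwedge^{3}L\oplus(\bigwedge^{2}L\otimes L^{*})\oplus(L\otimes\bigwedge^{2}L^{*})\oplus\bigwedge^{3}L^{*}$, on each summand $-I\in G$ acts by $-1$, so the coinvariants are $2$-torsion and agree with those of the mod-$2$ reductions, each of which has trivial $\mathrm{GL}(g,\mathbb{F}_2)$-coinvariants exactly when $g\ge 4$ (at $g=3$, $\bigwedge^{3}(\mathbb{F}_2^{3})$ is the trivial module and the argument collapses).

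\emph{Step 3: $(\mathfrak{B}_3)_{G}\cong\mathbb{Z}/2$ — the main obstacle.} From the degree filtration $\mathfrak{B}_0\subset\mathfrak{B}_1\subset\mathfrak{B}_2\subset\mathfrak{B}_3$, with graded pieces $\mathbb{F}_2$, $H_{\mathbb{F}_2}$, $\bigwedge^{2}H_{\mathbb{F}_2}$, $\bigwedge^{3}H_{\mathbb{F}_2}$ whose $G$-coinvariants are respectively $\mathbb{F}_2$, $0$, $\mathbb{F}_2$, $0$ (the degree-$2$ contribution being the trace on the $L\otimes L^{*}$-summand, carried by the $\mathrm{Sp}$-invariant symplectic form $\omega$), one gets $|(\mathfrak{B}_3)_{G}|\le 4$; since $\pi_g(1)=1$, $\pi_g$ already gives a surjection $(\mathfrak{B}_3)_{G}\twoheadrightarrow\mathbb{Z}/2$. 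So the task is to rule out a further $G$-invariant functional on $\mathfrak{B}_3$: given such a $\psi\colon\mathfrak{B}_3\to\mathbb{F}_2$, replace it by $\psi+\psi(1)\pi_g$ to assume $\psi(1)=0$; then $\psi$ kills $\mathfrak{B}_1$ and induces a $G$-invariant functional on $\mathfrak{B}_2/\mathfrak{B}_1=\bigwedge^{2}H_{\mathbb{F}_2}$, which is $0$ or $\omega$. If it is $0$, then $\psi$ descends to $\mathfrak{B}_3/\mathfrak{B}_2=\bigwedge^{3}H_{\mathbb{F}_2}$, which has no nonzero $G$-invariant functional for $g\ge 4$ (its coinvariants vanish), so $\psi=0$. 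If it is $\omega$, a direct computation with transvections rules it out: writing $\bar x\in\mathfrak{B}_1$ for Johnson's generator attached to $x\in H_{\mathbb{F}_2}$, apply $\gamma\in G$ with $\gamma(a_2)=a_1+a_2$, $\gamma(b_1)=b_1+b_2$ to the monomial $\bar a_1\bar a_2\bar b_1$; from $\psi\circ\gamma=\psi$ and the fact that $\psi$ agrees on $\mathfrak{B}_2$ with the pullback of $\omega$ one obtains $\psi(\bar a_1\bar a_2\bar b_2)=1$, hence after relabelling the symplectic basis $\psi(\bar a_2\bar a_3\bar b_3)=1$; but applying $\gamma'\in G$ with $\gamma'(a_1)=a_1+a_2$ to $\bar a_1\bar a_3\bar b_3$ forces $\psi(\bar a_2\bar a_3\bar b_3)=0$ — a contradiction. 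Therefore $(\mathfrak{B}_3^{*})^{G}=\mathbb{F}_2\cdot\pi_g$, so $(\mathfrak{B}_3)_{G}\cong\mathbb{Z}/2$ with quotient map $\pi_g$ under $(\mathfrak{B}_3)_{G}\cong\mathfrak{B}_0$. I expect this step — and especially excluding the case where $\psi$ restricts to $\omega$ — to be the real work, since $(\mathfrak{B}_2)_{G}$ is genuinely $(\mathbb{Z}/2)^{2}$ and one must see the extra class die in $\mathfrak{B}_3$.

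\emph{Step 4: conclusion.} Let $f\colon\mathcal{T}_{g,1}\to A$ be $\mathcal{AB}_{g,1}$-invariant. It factors through $H_1(\mathcal{T}_{g,1};\mathbb{Z})$; since $f\circ j$ is a $G$-invariant homomorphism out of $\bigwedge^{3} H$ it vanishes, so $f=\bar f\circ\sigma$ for a unique $\bar f\colon\mathfrak{B}_3\to A$. As $\mathfrak{B}_3$ is $2$-torsion, $\bar f$ takes values in $A_2$; being $G$-invariant it factors through $(\mathfrak{B}_3)_{G}=\mathfrak{B}_0$, i.e. $\bar f=\varepsilon^{x}\circ\pi_g$ with $x:=\bar f(\overline{1})\in A_2$. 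Hence $f=\varepsilon^{x}\circ\pi_g\circ\sigma=\mu^{x}_g=\Lambda(x)$, so $\Lambda$ is onto, and with Step 1 it is an isomorphism. Finally, $\pi_g\circ\sigma$ is the Birman–Craggs homomorphism of the embedding $\Sigma_g\subset\mathbf{S}^{3}$: classically it sends $\phi\in\mathcal{T}_{g,1}$ to the mod-$2$ Rohlin invariant of the homology sphere obtained by regluing $\mathbf{S}^{3}$ along $\Sigma_g$ via $\phi$, it vanishes on $\mathcal{TA}_{g,1}$ and $\mathcal{TB}_{g,1}$ (regluing by a map extending over a handlebody returns $\mathbf{S}^{3}$), and it is compatible with the stabilization maps; so $\{\mu^{x}_g\}_g$ satisfies properties i)–iii) and, under the bijection \eqref{bij_S3-intro}, reassembles into $\varepsilon^{x}$ composed with the Rohlin invariant — in particular, for $A=\mathbb{Z}/2$ and $x=\overline{1}$, into the Rohlin invariant itself.
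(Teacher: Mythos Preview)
Your argument is correct, and its backbone --- reduce $\mathrm{Hom}(\mathcal{T}_{g,1},A)^{\mathcal{AB}_{g,1}}$ to $\mathrm{Hom}\bigl(H_1(\mathcal{T}_{g,1};\mathbb{Z})_{GL_g(\mathbb{Z})},A\bigr)$, then show via Johnson's description of $H_1(\mathcal{T}_{g,1};\mathbb{Z})$ and a Boolean-algebra coinvariant computation that this is $\mathbb{Z}/2$ --- is the same as the paper's. The execution differs in two places worth noting.

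First, in Step~2 you use the sequence $0\to\Lambda^{3}H\xrightarrow{\,2\cdot\,}H_1(\mathcal{T}_{g,1};\mathbb{Z})\xrightarrow{\,\sigma\,}\mathfrak{B}_3\to 0$, while the paper uses the ``other'' Johnson sequence $0\to\mathfrak{B}_2\to H_1(\mathcal{T}_{g,1};\mathbb{Z})\to\Lambda^{3}H\to 0$; both yield $H_1(\mathcal{T}_{g,1})_{G}\cong(\mathfrak{B}_3)_{G}$ once $(\Lambda^{3}H)_{G}=0$. Your Step~3 then computes $(\mathfrak{B}_3)_{G}$ dually, by classifying $G$-invariant functionals and killing the spurious $\omega$-class with two transvections, whereas the paper (Lemma~\ref{lem_B_3-inv}) works directly in the coinvariant quotient, degree by degree; the two calculations are equivalent in content.

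The more substantial divergence is in Step~4. To show that $\{\mu^{x}_g\}_g$ reassembles into the Rohlin invariant, the paper first proves $H_1(\mathcal{TB}_{g,1};\mathbb{Z})_{\mathcal{AB}_{g,1}}=0=H_1(\mathcal{TA}_{g,1};\mathbb{Z})_{\mathcal{AB}_{g,1}}$ (Lemma~\ref{lem_TB,TA}, which in turn rests on all of Section~\ref{section_luft}: transitivity of $\mathcal{B}_{g,1}$ on CBP-twists, the Luft group, and a lantern argument), deduces that $\mu^{x}_g$ vanishes on $\mathcal{TA}_{g,1}$ and $\mathcal{TB}_{g,1}$, and then identifies $\pi_g\circ\sigma$ with $R_g$ by uniqueness. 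You instead identify $\pi_g$ with evaluation at the quadratic refinement $q_g$ coming from $\Sigma_g\subset\mathbf{S}^{3}$, so that $\pi_g\circ\sigma$ is literally the Birman--Craggs homomorphism for this embedding, hence equals the Rohlin invariant by its very definition; vanishing on $\mathcal{TA}_{g,1},\mathcal{TB}_{g,1}$ is then tautological (regluing by such a map returns $\mathbf{S}^{3}$). Your route is considerably shorter for Theorem~\ref{teo-roh-intro} in isolation, bypassing Section~\ref{section_luft} and Lemmas~\ref{lem_TB,TA}--\ref{lema-LTB-zero}. The paper's longer route is not wasted, however: Lemma~\ref{lem_TB,TA} is used again, and essentially, in the proof of Theorem~\ref{teo_gen_tool-intro}, where one must show that an \emph{arbitrary} $\mathcal{AB}_{g,1}$-invariant trivialization $q_g+\mu^{x}_g$ vanishes on $\mathcal{TA}_{g,1}$ and $\mathcal{TB}_{g,1}$ --- and there no Birman--Craggs shortcut is available.
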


This result allows us to generalize the construction of invariants of homology $3$-spheres given by Pitsch in \cite{pitsch} to include values on an abelian group with $2$-torsion.

\begin{theorem}
\label{teo_gen_tool-intro}
Let $A$ be an abelian group and $A_2$ the subgroup of $2$-torsion elements.
For each $x\in A_2$ and $g\geq 4,$ a family of cocycles $\{C_g\}_{g\geq 3}$ on the Torelli group $\mathcal{T}_{g,1}$ satisfying conditions (1)-(3) provides a compatible familiy of trivializations $F_g+\mu_g^x: \mathcal{T}_{g,1}\rightarrow A$ that reassemble into an invariant of integral homology $3$-spheres
$$\lim_{g\to \infty}F_g+\mu_g^x:\mathcal{S}^3\rightarrow A$$
if and only if the following two conditions hold:
\begin{enumerate}[(i)]
\item The associated cohomology classes $[C_g]\in H^2(\mathcal{T}_{g,1};A)$ are trivial,
\item The associated torsors $\rho(C_g)\in H^1(\mathcal{AB}_{g,1},Hom(\mathcal{T}_{g,1},A))$ are trivial.
\end{enumerate}
\end{theorem}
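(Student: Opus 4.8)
The plan is to carry out Pitsch's obstruction-theoretic construction of \cite{pitsch} in the presence of $2$-torsion, using Theorem~\ref{teo-roh-intro} to control the new indeterminacy that then appears. Recall that a \emph{trivialization} of $C_g$ is a $1$-cochain $F_g\colon\mathcal{T}_{g,1}\to A$ with $\delta F_g=C_g$; one exists exactly when $[C_g]=0$, and then the set of trivializations is a torsor under $\mathrm{Hom}(\mathcal{T}_{g,1},A)$. By condition~(2), conjugation sends a trivialization $F_g$ to another trivialization $\phi\cdot F_g$ for $\phi\in\mathcal{AB}_{g,1}$, the assignment $\phi\mapsto\phi\cdot F_g-F_g$ is a $1$-cocycle whose class is $\rho(C_g)\in H^1(\mathcal{AB}_{g,1},\mathrm{Hom}(\mathcal{T}_{g,1},A))$ independently of $F_g$, and $\rho(C_g)=0$ iff some trivialization is $\mathcal{AB}_{g,1}$-invariant. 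Since $\mu_g^x$ is a homomorphism, $\delta(F_g+\mu_g^x)=\delta F_g$; and since by Theorem~\ref{teo-roh-intro} the family $\{\mu_g^x\}_g$ reassembles into the Rohlin invariant, it satisfies properties i)--iii) of the Introduction, as any invariant of $\mathcal{S}^3$ viewed on the Torelli groups does. Hence $\{F_g+\mu_g^x\}_g$ is a compatible family of trivializations reassembling into an invariant of $\mathcal{S}^3$ \emph{if and only if} $\{F_g\}_g$ is a family of trivializations satisfying i)--iii), and it suffices to characterise when such a family exists.

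For necessity: if $\{F_g+\mu_g^x\}_g$ reassembles into an invariant, then each $F_g$ trivializes $C_g$, so $[C_g]=0$ --- condition~(i); and property iii) for $F_g+\mu_g^x$ together with the $\mathcal{AB}_{g,1}$-invariance of $\mu_g^x$ forces $F_g$ to be $\mathcal{AB}_{g,1}$-invariant, so $\rho(C_g)=0$ --- condition~(ii).

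For sufficiency, assume (i) and (ii). By (i) each $C_g$ admits a trivialization and, since $\rho(C_g)=0$ by (ii), an $\mathcal{AB}_{g,1}$-invariant one $F_g$, which I then correct within its $\mathcal{AB}_{g,1}$-invariant representatives to make the family bi-invariant and compatible with stabilization. Condition~(3) gives $C_g(\xi_a,x)=C_g(x,\xi_b)=0$ for $\xi_a\in\mathcal{TA}_{g,1}$, $\xi_b\in\mathcal{TB}_{g,1}$, whence $F_g(\xi_a x\xi_b)=F_g(\xi_a)+F_g(x)+F_g(\xi_b)$ and $F_g|_{\mathcal{TA}_{g,1}}$, $F_g|_{\mathcal{TB}_{g,1}}$ are homomorphisms; using Johnson's computation of $H_1(\mathcal{T}_{g,1})$ and the images in it of $H_1(\mathcal{TA}_{g,1})$, $H_1(\mathcal{TB}_{g,1})$ together with the $\mathcal{AB}_{g,1}$-invariance of $F_g$, I produce an $\mathcal{AB}_{g,1}$-invariant homomorphism $h_g$ annihilating both restrictions, with residual freedom $\mathrm{Hom}(\mathcal{T}_{g,1},A)^{\mathcal{AB}_{g,1}}\cong A_2$ by Theorem~\ref{teo-roh-intro}. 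By condition~(1) restriction along $\mathcal{T}_{g,1}\hookrightarrow\mathcal{T}_{g+1,1}$ carries an $\mathcal{AB}_{g+1,1}$-invariant bi-invariant trivialization of $C_{g+1}$ to one of $C_g$, and any two differ by an element of $\mathrm{Hom}(\mathcal{T}_{g,1},A)^{\mathcal{AB}_{g,1}}$; as $\mu_{g+1}^x$ restricts to $\mu_g^x$ this is an inverse system isomorphic to the constant system $A_2$, its $\varprojlim^{1}$ vanishes, and the trivializations can be chosen coherently in $g$. The resulting family $\{F_g\}_g$ satisfies i)--iii); it is unique up to adding $\{\mu_g^x\}_g$ with $x\in A_2$, and for each such $x$ the family $\{F_g+\mu_g^x\}_g$ reassembles, via \eqref{bij_S3-intro}, into an invariant $\lim_{g\to\infty}F_g+\mu_g^x\colon\mathcal{S}^3\to A$.

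I expect the main obstacle to be the bi-invariance step --- correcting an $\mathcal{AB}_{g,1}$-invariant trivialization, by an $\mathcal{AB}_{g,1}$-invariant homomorphism, so that it vanishes on $\mathcal{TA}_{g,1}$ and $\mathcal{TB}_{g,1}$ at once. This requires a careful analysis of the first homology of the Torelli group and of its two handlebody subgroups under the conjugation action of $\mathcal{AB}_{g,1}$; it is precisely where the $2$-torsion-free hypothesis entered Pitsch's argument, and where Theorem~\ref{teo-roh-intro} is needed to show that the leftover ambiguity is exactly the $A_2$-family of Rohlin-type homomorphisms $\{\mu_g^x\}$, which does not obstruct reassembly but instead parametrises the invariants that are produced.
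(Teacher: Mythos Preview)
Your overall architecture matches the paper: reduce to finding an $\mathcal{AB}_{g,1}$-invariant trivialization, use Theorem~\ref{teo-roh-intro} to identify the residual freedom as $A_2$, make the family compatible with stabilization, and then verify well-definedness on the double cosets. Necessity is fine.

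The gap is in the bi-invariance step, and it is exactly where you flagged the difficulty. Your plan is to \emph{correct} an $\mathcal{AB}_{g,1}$-invariant trivialization $F_g$ by an $\mathcal{AB}_{g,1}$-invariant homomorphism $h_g\colon\mathcal{T}_{g,1}\to A$ so that $(F_g-h_g)$ vanishes on $\mathcal{TA}_{g,1}$ and $\mathcal{TB}_{g,1}$, invoking ``the images of $H_1(\mathcal{TA}_{g,1})$, $H_1(\mathcal{TB}_{g,1})$ in $H_1(\mathcal{T}_{g,1})$''. This cannot work as stated: by Theorem~\ref{teo-roh-intro} every $\mathcal{AB}_{g,1}$-invariant homomorphism on $\mathcal{T}_{g,1}$ is some $\mu_g^x$, and these already vanish on $\mathcal{TA}_{g,1}$ and $\mathcal{TB}_{g,1}$; so no correction is possible unless the restrictions are zero to begin with. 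Moreover, $F_g|_{\mathcal{TA}_{g,1}}$ is an $\mathcal{AB}_{g,1}$-invariant homomorphism from $\mathcal{TA}_{g,1}$, not from $\mathcal{T}_{g,1}$, and there is no reason for it to factor through the image of $H_1(\mathcal{TA}_{g,1})$ in $H_1(\mathcal{T}_{g,1})$ --- Johnson's computation of the latter does not control $\mathrm{Hom}(\mathcal{TA}_{g,1},A)^{\mathcal{AB}_{g,1}}$.

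What the paper does instead is prove directly that $H_1(\mathcal{TB}_{g,1};\mathbb{Z})_{\mathcal{AB}_{g,1}}=0=H_1(\mathcal{TA}_{g,1};\mathbb{Z})_{\mathcal{AB}_{g,1}}$ (Lemma~\ref{lem_TB,TA}). This immediately forces $F_g|_{\mathcal{TA}_{g,1}}=0=F_g|_{\mathcal{TB}_{g,1}}$ for \emph{any} $\mathcal{AB}_{g,1}$-invariant trivialization, with no correction needed. The proof of Lemma~\ref{lem_TB,TA} is the real work of the paper: it passes through the short exact sequence $1\to\mathcal{LTB}_{g,1}\to\mathcal{TB}_{g,1}\to IA\to 1$, reduces to showing $H_1(\mathcal{LTB}_{g,1})_{\mathcal{TB}_{g,1}\cdot\mathcal{AB}_{g,1}}=0$, and establishes this via the structure theory of contractible bounding pair twists (Propositions~\ref{prop_trans_B}, \ref{prop_CBP_1}) together with a lantern-relation argument. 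None of this machinery is visible in your plan, and it is not recoverable from Johnson's description of $H_1(\mathcal{T}_{g,1})$ alone.
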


\textbf{Outlines of this work}

In Section 2 we review some definitions about the mapping class group, the symplectic representation, the Boolean algebra, the Birman-Craggs-Johnson-homomoprhism and handlebodies. We also compute the $GL_g(\mathbb{Z})$-coinvariants of the Boolean algebras of degree $2,$ $3$ and give a basic 3-dimensional topology lemma about handlebodies.
In Section 3 we recall the definitions of contractible bounding pair twists and the Luft group. Moreover, we exhibit some results about the handlebody subgroup $\mathcal{B}_{g,1}$ and the Luft-Torelli group $\mathcal{LTB}_{g,1}$.
In Section 4 we prove that $H_1(\mathcal{T}_{g,1};\mathbb{Z})_{\mathcal{AB}_{g,1}}$ is isomorphic to $\mathbb{Z}/2,$ and we also show that $H_1(\mathcal{TB}_{g,1};\mathbb{Z})_{\mathcal{AB}_{g,1}}$ and  $H_1(\mathcal{TA}_{g,1};\mathbb{Z})_{\mathcal{AB}_{g,1}}$ are zero. At the end of this section, using the aforementioned computations, we prove Theorem \ref{teo-roh-intro}.
Finally, in Section 5 we give the proof of Theorem
\ref{teo_gen_tool-intro}.

\section{Preliminaries}

\subsection{The Symplectic representation}
For a given integer $g\geq 1$ consider the basis $\{ a_1,\ldots , a_g, b_1,\ldots, b_g \}$ of $H_1(\Sigma_{g,1};\mathbb{Z})$ given by the homology class of the respective curves $\{ \alpha_1,\ldots , \alpha_g, \beta_1,\ldots, \beta_g \}$ depicted in Fig.~\ref{fig:homology_basis}. Transverse intersection of oriented paths on $\Sigma_{g,1}$ induces a symplectic form $\omega$ on $H_1(\Sigma_{g,1};\mathbb{Z})$, with $\omega(b_i,a_i)=-\omega(a_i,b_i)=1$ and zero otherwise. Moreover, both sets of displayed homology classes $\{ a_i \ | \ 1 \leq i \leq g\}$ and $\{ b_i \ | \ 1 \leq i \leq g \}$ form a symplectic basis, and in particular generate supplementary transverse Lagrangians $A$ and $B$. As a symplectic space we write $H_1(\Sigma_{g,1};\mathbb{Z}) = A \oplus B$.

\begin{figure}[H]
\begin{center}
\begin{tikzpicture}[scale=.7]
\draw[very thick] (-4.5,-2) -- (5,-2);
\draw[very thick] (-4.5,2) -- (5,2);
\draw[very thick] (-4.5,2) arc [radius=2, start angle=90, end angle=270];

\draw[very thick] (-4.5,0) circle [radius=.4];
\draw[very thick] (-2,0) circle [radius=.4];
\draw[very thick] (2,0) circle [radius=.4];

\draw[thick, dotted] (-0.5,0) -- (0.5,0);

\draw[thick] (1.2,0) to [out=90,in=180] (2,0.8);
\draw[thick] (2.8,0) to [out=90,in=0] (2,0.8);
\draw[thick] (1.2,0) to [out=-90,in=180] (2,-0.8) to [out=0,in=-90] (2.8,0);

\draw[thick] (-5.3,0) to [out=90,in=180] (-4.5,0.8);
\draw[thick] (-3.7,0) to [out=90,in=0] (-4.5,0.8);
\draw[thick] (-5.3,0) to [out=-90,in=180] (-4.5,-0.8) to [out=0,in=-90] (-3.7,0);

\draw[thick] (-2.8,0) to [out=90,in=180] (-2,0.8);
\draw[thick] (-1.2,0) to [out=90,in=0] (-2,0.8);
\draw[thick] (-2.8,0) to [out=-90,in=180] (-2,-0.8) to [out=0,in=-90] (-1.2,0);

\draw[thick,dashed] (-2,0.4) to [out=70,in=-70] (-2,2);
\draw[thick] (-2,0.4) to [out=110,in=-110] (-2,2);

\draw[thick,dashed] (-4.5,0.4) to [out=70,in=-70] (-4.5,2);
\draw[thick] (-4.5,0.4) to [out=110,in=-110] (-4.5,2);

\draw[thick,dashed] (2,0.4) to [out=70,in=-70] (2,2);
\draw[thick] (2,0.4) to [out=110,in=-110] (2,2);

\node at (-5.1,1.2) {$\beta_1$};
\node [below] at (-4.5,-0.8) {$\alpha_1$};
\node at (-2.6,1.2) {$\beta_2$};
\node at (1.4,1.2) {$\beta_g$};
\node [below] at (-2,-0.8) {$\alpha_2$};
\node [below] at (2,-0.8) {$\alpha_g$};

\draw[thick,pattern=north west lines] (5,-2) to [out=130,in=-130] (5,2) to [out=-50,in=50] (5,-2);

\draw [<-](1.2,0.01) -- (1.2,0);
\draw [<-](-2.8,0.01) -- (-2.8,0);
\draw [<-](-5.3,0.01) -- (-5.3,0);

\draw [->](-4.65,1.21) -- (-4.65,1.2);
\draw [->](-2.15,1.21) -- (-2.15,1.2);
\draw [->](1.85,1.21) -- (1.85,1.2);

\end{tikzpicture}
\end{center}
\caption{Symplectic basis of $H_1(\Sigma_{g,1};\mathbb{Z})$.}
\label{fig:homology_basis}
\end{figure}
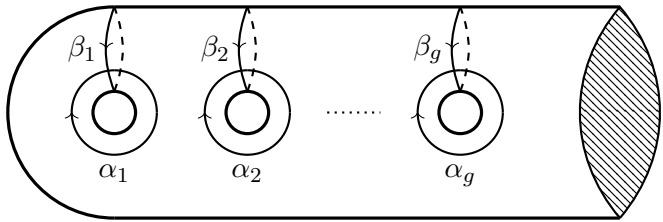

The symplectic form $\omega$ is preserved by the natural action of the mapping class group on the first homology of $\Sigma_{g,1}$ and gives rise to the symplectic representation
\[
\Psi: \mathcal{M}_{g,1}\longrightarrow Sp_{2g}(\mathbb{Z}),
\]
which is known to be a surjective map (cf. \cite{burk}).
In particular, by \cite[Lemma 3]{pitsch}, the action of $\mathcal{AB}_{g,1}$ on $H_1(\Sigma_{g,1};\mathbb{Z})$ factors through $GL_g(\mathbb{Z}),$
where an element $G\in GL_g(\mathbb{Z})$ acts on $H_1(\Sigma_{g,1};\mathbb{Z})$ via the action of the matrix $\left(\begin{smallmatrix}
G & 0 \\
0 &{}^tG^{-1}
\end{smallmatrix}\right)\in Sp_{2g}(\mathbb{Z}).$

To understand better $\Psi$ we show what it does to Dehn twists: given $k\in \mathbb{Z}$ and two oriented simple closed
curves $x$ and $y$ in $\Sigma_{g,1}$ with respective homology classes $[x]$ and $[y],$ the action of the Dehn twist $T^k_x$ on $H_1(\Sigma_{g,1};\mathbb{Z})$ is given by:
$$\Psi(T^k_x)([y])=[y]+k\;\omega([y],[x])[x].$$

To avoid too heavy notation in all subsequent sections we will sometimes abbreviate $H:=H_1(\Sigma_{g,1};\mathbb{Z})$ and $H_2:=H_1(\Sigma_{g,1};\mathbb{Z}/2).$

\subsection{The Boolean algebra and the BCJ-homomorphism}

The Boolean polynomial algebra $\mathfrak{B}=\mathfrak{B}_{g}$ is a $\mathbb{Z}/2$-algebra (with unit $1$) with a generator $\overline{x}$ for each $x\in H_1(\Sigma_{g,1};\mathbb{Z}/2)$ and subject to the relations:
\begin{enumerate}[(a)]
\item $\overline{x+y}=\overline{x}+\overline{y}+ \omega(x, y) \text{ mod } 2,$
\item $\overline{x}^2=\overline{x}.$
\end{enumerate}
The relation (b) implies that $p^2=p$ for any $p\in \mathfrak{B}$ and also that if $\{e_i \mid i\in \{ 1,\ldots 2g\}\}$ is a basis for $H_1(\Sigma_{g,1};\mathbb{Z}/2)$ then the set of all monomials
$\overline{e}_{i_1}\overline{e}_{i_2}\ldots \overline{e}_{i_r}$ with $0\leq r\leq 2g$ and $1\leq i_1< i_2< \ldots < i_r\leq 2g$ is a $\mathbb{Z}/2$-basis for $\mathfrak{B}.$ Denote by  $\mathfrak{B}_k=\mathfrak{B}^k_{g}$ the subspace generated by all monomials of ``degree'' $\leq k.$

In \cite{jo1}, Johnson constructed a surjective homomorphism $\sigma: \mathcal{T}_{g,1}\rightarrow \mathfrak{B}_3,$ called the Birman-Craggs-Johnson homomorphism (abbreviated BCJ-homomorphism), which may be described as follows.
Consider the $\mathbb{Z}/2$-basis of $\mathfrak{B}_3$ given by
$$
\left\{\overline{1},\overline{a}_i,\overline{b}_i,\overline{a}_i\overline{a}_j,\overline{b}_i\overline{b}_j, \overline{a}_i\overline{b}_j, \overline{a}_i\overline{b}_i,\overline{a}_i\overline{a}_j\overline{a}_k,\overline{b}_i\overline{b}_j\overline{b}_k, \overline{a}_i\overline{b}_j\overline{b}_k,\overline{a}_i\overline{a}_j\overline{b}_k,\overline{a}_i\overline{b}_i\overline{b}_j, \overline{a}_i\overline{b}_i\overline{a}_j \right\},$$
where $i,j,k\in \{1,\ldots, g\}$ are pairwise distinct.
Consider the curves depicted in the following figure:

\begin{figure}[H]
\begin{center}
\begin{tikzpicture}[scale=.7]
\draw[very thick] (-4.5,-2) -- (7,-2);
\draw[very thick] (-4.5,2) -- (7,2);
\draw[very thick] (-4.5,2) arc [radius=2, start angle=90, end angle=270];
\draw[very thick] (-4.5,0) circle [radius=.4];
\draw[thick, dotted] (-3.5,0) -- (-3,0);
\draw[very thick] (-2,0) circle [radius=.4];
\draw[very thick] (2,0) circle [radius=.4];
\draw[thick, dotted] (3.5,0) -- (3,0);
\draw[very thick] (4.5,0) circle [radius=.4];

\draw[thick, dashed] (0,2) to [out=-70,in=70] (0,-2);
\draw[thick ] (0,2) to [out=-110,in=110] (0,-2);

\draw[thick,dashed] (2,0.4) to [out=70,in=-70] (2,2);
\draw[thick] (2,0.4) to [out=110,in=-110] (2,2);

\draw[thick,dashed] (2,-0.4) to [out=-70,in=70] (2,-2);
\draw[thick] (2,-0.4) to [out=-110,in=110] (2,-2);

\node[right] at (2.1,1.5) {$\beta$};
\node[right] at (2.1,-1.5) {$\beta'$};
\node at (0,0) {$\gamma$};

\node at (-4.5,0) {\tiny{1}};
\node at (-2,0) {\tiny{k}};
\node at (2,0) {\tiny{k+1}};
\node at (4.5,0) {\tiny{g}};

\draw[thick,pattern=north west lines] (7,-2) to [out=130,in=-130] (7,2) to [out=-50,in=50] (7,-2);

\end{tikzpicture}
\end{center}
\caption{A bounding simple closed curve and a bounding pair in $\Sigma_{g,1}$.}
\end{figure}
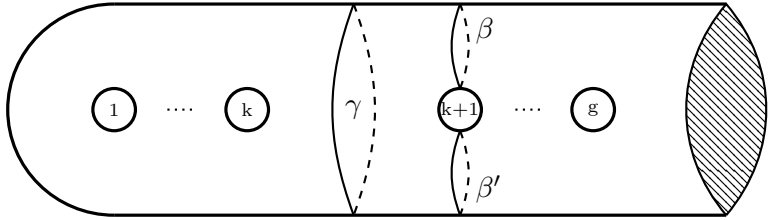

The BCJ-homomorphism is given on a BP-map $T_\beta T_{\beta'}^{-1}$ by (cf. \cite[Lemma 12b]{jo1})
\[\sigma(T_\beta T_{\beta'}^{-1})= \sum^k_{i=1}\overline{c}_i\overline{d}_i(\overline{e}+\overline{1}),\]
where $e$ is the homology class of $\beta,$ and $\{c_i,d_i\}_i$ is a symplectic basis of a subsurface $\Sigma_{k,1}$ of $\Sigma_{g,1}$ with boundary component $\gamma,$ such that $\gamma \cup \beta \cup \beta'$ is the boundary of a subsurface with genus zero in $\Sigma_{g,1}.$ And on a Dehn twist about a bounding simple closed curve $\gamma$ of genus $k,$ by (cf. \cite[Lemma 12a]{jo1})
$$\sigma(T_\gamma)= \sum^k_{i=1}\overline{c}_i\overline{d}_i,$$
where $\{c_i,d_i\}_i$ is the symplectic basis of the subsurface of genus $k$ bounded by $\gamma.$

By \cite[Lemma 13]{jo1}, the BCJ-homomorphism $\sigma$ is $\mathcal{M}_{g,1}$-equivariant. The action of $\mathcal{M}_{g,1}$ on $\mathfrak{B}_3$ factors through $Sp_{2g}(\mathbb{Z})$ via the symplectic representation $\Psi:\mathcal{M}_{g,1}\rightarrow Sp_{2g}(\mathbb{Z})$ where $Sp_{2g}(\mathbb{Z})$ acts on $\mathfrak{B}_3$ via its action on $H_1(\Sigma_{g,1},\mathbb{Z})$ modulo $2$. In other words, for a given $f\in \mathcal{M}_{g,1}$ and three elements $z_i,z_j,z_k$ of $H_1(\Sigma_{g,1},\mathbb{Z}/2),$ the action is given by:
$$f\cdot(\overline{z}_i\overline{z}_j\overline{z}_k)=\overline{\Psi(f)\cdot z_i}\;\overline{\Psi(f)\cdot z_j}\;\overline{\Psi(f)\cdot z_k}.$$

\begin{lemma}
\label{lem_B_3-inv}
Provided $g\geq 3,$ the isomorphisms
$(\mathfrak{B}_2)_{GL_g(\mathbb{Z})}\cong \langle \overline{1}, \overline{a}_1 \overline{b}_1\rangle \cong(\mathbb{Z}/2)^2$ hold, and provided $g\geq 4$ the isomorphism $(\mathfrak{B}_3)_{GL_g(\mathbb{Z})}\cong \langle \overline{1}\rangle=\mathbb{Z}/2$ also holds.
\end{lemma}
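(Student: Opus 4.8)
The plan is to combine the degree filtration with the right-exactness of the coinvariants functor, reducing the statement to coinvariants of exterior powers of $H_2$, and then to detect the surviving classes by explicit equivariant functionals. First observe that since $\mathfrak{B}$ depends only on $H_2$, the $\mathcal{AB}_{g,1}$-action on $\mathfrak{B}_3$, which by \cite[Lemma 3]{pitsch} already factors through $GL_g(\mathbb{Z})$ acting via $\left(\begin{smallmatrix}G&0\\0&{}^tG^{-1}\end{smallmatrix}\right)$, in fact factors through $GL_g(\mathbb{Z}/2)$; hence all the coinvariants in question are $GL_g(\mathbb{Z}/2)$-coinvariants and one may freely use elementary (transvection) matrices mod $2$. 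Next, for each $k$ one has the short exact sequence of $GL_g(\mathbb{Z}/2)$-modules $0\to\mathfrak{B}_{k-1}\to\mathfrak{B}_k\to\bigwedge^{k}H_2\to 0$, where the quotient sends a degree-$k$ monomial to the corresponding wedge; applying coinvariants gives exact sequences $(\mathfrak{B}_{k-1})_{GL_g}\to(\mathfrak{B}_k)_{GL_g}\to(\bigwedge^{k}H_2)_{GL_g}\to 0$. Writing $H_2=V\oplus V^{*}$ with $V=A\otimes\mathbb{Z}/2$ and $V^{*}=B\otimes\mathbb{Z}/2$ (its dual as a $GL_g(\mathbb{Z}/2)$-module), and each of $V,V^{*}$ a submodule of $H_2$, one decomposes $\bigwedge^{k}H_2=\bigoplus_{p+q=k}\bigwedge^{p}V\otimes\bigwedge^{q}V^{*}$ as $GL_g(\mathbb{Z}/2)$-modules, so $(\bigwedge^{k}H_2)_{GL_g}=\bigoplus_{p+q=k}(\bigwedge^{p}V\otimes\bigwedge^{q}V^{*})_{GL_g}$.

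The heart of the argument is then two computations. First, $(\bigwedge^{p}V\otimes\bigwedge^{q}V^{*})_{GL_g}=0$ for all $(p,q)$ with $p+q\le 3$ except $(0,0)$ (which gives $\mathbb{Z}/2$) and $(1,1)$. Each such vanishing follows from one device: a monomial like $\overline{a}_{i_1}\cdots\overline{a}_{i_p}\overline{b}_{j_1}\cdots\overline{b}_{j_q}$ is exhibited as a coboundary $G\cdot X+X$, where $G=I+E_{\ell m}$ is an elementary matrix and $X$ is the monomial obtained by replacing one index by a fresh index $\ell$; expanding $G\cdot X$ with the relation $\overline{x+y}=\overline{x}+\overline{y}+\omega(x,y)$ — whose $\omega$-terms all vanish here because $G$ preserves each Lagrangian — and using $\overline{x}^2=\overline{x}$ isolates the desired monomial modulo already-killed ones. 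Freshness of $\ell$ forces the genus bounds: the $p+q=2$ cases need $g\ge 3$, the $p+q=3$ cases need $g\ge 4$. Second, $(V\otimes V^{*})_{GL_g}\cong\mathbb{Z}/2$ for $g\ge 3$: the same device reduces every $\overline{a}_i\overline{b}_j$ with $i\ne j$ to zero, permutation matrices identify all $\overline{a}_i\overline{b}_i$, so the coinvariants are a quotient of $\mathbb{Z}/2$ generated by $[\overline{a}_1\overline{b}_1]$. Feeding this into the exact sequences: for $g\ge 3$ one gets $(\bigwedge^{0})_{GL_g}=\mathbb{Z}/2$, $(\bigwedge^{1}H_2)_{GL_g}=0$, $(\bigwedge^{2}H_2)_{GL_g}=\mathbb{Z}/2$, hence $\#(\mathfrak{B}_2)_{GL_g}\le 4$; and for $g\ge 4$ also $(\bigwedge^{3}H_2)_{GL_g}=0$, so $(\mathfrak{B}_3)_{GL_g}$ is a quotient of $(\mathfrak{B}_2)_{GL_g}$.

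To pin the answers down I would produce $GL_g(\mathbb{Z})$-equivariant maps realizing the asserted quotients. For $\overline{1}$: let $q_0$ be the (unique) quadratic form with $q_0(a_i)=q_0(b_i)=0$ and set $\mathrm{ev}(\overline{x})=q_0(x)$; this extends to a ring homomorphism $\mathfrak{B}\to\mathbb{Z}/2$ because $q_0(x)^2=q_0(x)$ and $q_0(x+y)=q_0(x)+q_0(y)+\omega(x,y)$, and it is $GL_g$-equivariant because $q_0$ is characterised by vanishing on the two Lagrangians and $\omega$ is $GL_g$-invariant. Since $\mathrm{ev}(\overline{1})=1$, the class $[\overline{1}]$ is nonzero in every $(\mathfrak{B}_k)_{GL_g}$. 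For $\overline{a}_1\overline{b}_1$ in $\mathfrak{B}_2$: split $\mathfrak{B}_2=(V\otimes V^{*})\oplus C$ as $GL_g$-modules, where $V\otimes V^{*}=\langle\overline{a}_i\overline{b}_j\rangle$ and $C=\langle\overline{1},\overline{a}_i,\overline{b}_i,\overline{a}_i\overline{a}_j,\overline{b}_i\overline{b}_j\rangle$ (one checks both summands are stable, using that $G$ acts linearly and separately on the $\overline{a}_i$ and on the $\overline{b}_i$), then compose with the trace $V\otimes V^{*}\to\mathbb{Z}/2$, $\overline{a}_i\overline{b}_j\mapsto\delta_{ij}$, extended by zero on $C$; together with $\mathrm{ev}$ this gives a surjection $(\mathfrak{B}_2)_{GL_g}\twoheadrightarrow(\mathbb{Z}/2)^2$ matching the upper bound, whence $(\mathfrak{B}_2)_{GL_g}=\langle\overline{1},\overline{a}_1\overline{b}_1\rangle\cong(\mathbb{Z}/2)^2$. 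Finally, for $\mathfrak{B}_3$ the point is that $\overline{a}_1\overline{b}_1$ now dies: taking $G$ the transvection with $a_1\mapsto a_1+a_2$, $b_2\mapsto b_2+b_1$ and the remaining basis vectors fixed, and $X=\overline{a}_1\overline{b}_1\overline{b}_2$, a direct expansion gives $G\cdot X+X=\overline{a}_1\overline{b}_1+\overline{a}_2\overline{b}_1\overline{b}_2+\overline{a}_2\overline{b}_1$, and the last two summands already vanish in $(\mathfrak{B}_3)_{GL_g}$ by the first computation; hence $(\mathfrak{B}_3)_{GL_g}$ is a quotient of $\langle\overline{1}\rangle\cong\mathbb{Z}/2$, and by $\mathrm{ev}$ it equals $\mathbb{Z}/2$, for $g\ge 4$.

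The main obstacle I anticipate is the lower bounds rather than the upper bounds: over $\mathbb{F}_2$ a nontrivial module may have nonzero coinvariants, so one cannot simply invoke irreducibility or general position — the equivariant functionals $\mathrm{ev}$ and the trace must be constructed and their equivariance verified explicitly, and one must be careful that the $\omega$-terms in the defining relation really do vanish for the $GL_g$-action (they do not for the full $Sp_{2g}(\mathbb{Z})$). A secondary subtlety is bookkeeping the exact genus thresholds in the transvection arguments and recognising that the answer genuinely drops from $(\mathbb{Z}/2)^2$ to $\mathbb{Z}/2$ on passing from degree $2$ to degree $3$, which is what makes the extra degree-$3$ coboundary above necessary.
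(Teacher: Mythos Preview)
Your proof is correct and follows essentially the same route as the paper's: both kill the unwanted monomials via coboundaries $G\cdot X - X$ for elementary matrices $G=Id+E_{ij}$, both detect $\overline{a}_1\overline{b}_1$ in $(\mathfrak{B}_2)_{GL_g}$ via the $\omega$-pairing on $\mathfrak{B}_2/\mathfrak{B}_1$ (your ``trace'' is exactly the paper's map $p$), and both observe that a single degree-$3$ transvection relation forces $\overline{a}_1\overline{b}_1=0$ in $(\mathfrak{B}_3)_{GL_g}$ (you use $X=\overline{a}_1\overline{b}_1\overline{b}_2$, the paper uses $X=\overline{a}_1\overline{a}_2\overline{b}_2$, with the same $G$).

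Your packaging is somewhat more systematic: you organise the upper bounds through the filtration $0\to\mathfrak{B}_{k-1}\to\mathfrak{B}_k\to\bigwedge^k H_2\to 0$ and the Lagrangian decomposition $\bigwedge^k H_2=\bigoplus_{p+q=k}\bigwedge^pV\otimes\bigwedge^qV^{*}$, which makes the genus thresholds transparent and separates the problem into clean $GL_g$-pieces, whereas the paper simply lists monomial generators degree by degree and kills them in turn. You also supply the equivariant evaluation $\mathrm{ev}$ at the quadratic form $q_0$ to certify $[\overline{1}]\neq 0$, a point the paper leaves implicit; this closes a small gap, since the paper only checks $\overline{a}_1\overline{b}_1\neq 0$ but not that $\overline{1}$ and $\overline{a}_1\overline{b}_1$ are independent in $(\mathfrak{B}_2)_{GL_g}$. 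Neither approach is materially shorter, but yours makes clearer why nothing further can survive.
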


\begin{proof}
Since the action of $GL_g(\mathbb{Z}) \subset Sp_{2g}(\mathbb{Z})$ on the Boolean algebra $\mathfrak{B}_3$ does not increase the degree, we will compute coinvariants bottom up. Denote by $E_{ij}$ the matrix of size $g\times g$ with $1$ at position $(i,j)$ and zero elsewhere. Keep in mind that the symmetric group $\mathfrak{S}_g \subset GL_g(\mathbb{Z})$ acts on $H$ by permuting the indices of the generating set $\{a_i,b_i\}_{1 \leq i \leq g}$. 
 \vspace{0.2cm}

\textbf{In degree 0} we only have one element $\overline{1}$, and it is clearly invariant.

\textbf{In degree 1} the coinvariant module is generated by the elements $\overline{a}_1,\overline{b}_1$.
    
    Apply the element $G= Id_g + E_{21}$, to get:
    \[
    G \cdot \overline{a}_1 = \overline{a_1 +a_2}=\overline{a}_1 +\overline{a}_2.
    \]
    
    So in the coinvariants quotient $\overline{a}_2=0$, but this is the class of $\overline{a}_1$.
    
    The same computation shows that in the quotient $\overline{b}_1=0$.
    
\textbf{In degree 2} the coinvariants module is generated by the products
\[\overline{a}_1\overline{a}_2,\quad \overline{a}_1\overline{b}_2,\quad  \overline{a}_1\overline{b}_1, \quad\overline{b}_1\overline{b}_2.\] 
    
    We proceed now as before. \begin{itemize}
    	\item Acting by $G =Id_g + E_{32}$ on $\overline{a}_1\overline{a}_2$ gives:
    	\[
    	G\cdot \overline{a}_1\overline{a}_2 = \overline{a}_1 (\overline{a_2 +a_3}) = \overline{a}_1\overline{a}_2 + \overline{a}_1\overline{a}_3.
    	\]
    	So in the coinvariants quotient $\overline{a}_1\overline{a}_3=0,$ but this is the class of $\overline{a}_1 \overline{a}_2$.
    	
    	The same holds true for $\overline{b}_1\overline{b}_2$.
    	\item Acting by $G =Id_g + E_{21}$ on $\overline{a}_1\overline{b}_2$ gives:
    	\[
    	G \cdot \overline{a}_1\overline{b}_2 = (\overline{a_1 + a_2})(\overline{-b_1 +b_2}) = -\overline{a}_1\overline{b}_1 + \overline{a}_1\overline{b}_2 - \overline{a}_2\overline{b}_1 + \overline{a}_2\overline{b}_2.
    	\]
    	Since in the coinvariants quotient $\overline{a}_1\overline{b}_1=\overline{a}_2\overline{b}_2,$ we get that $\overline{a}_2\overline{b}_1 = 0$ in the coinvariants quotient. But this is the class of $\overline{a}_1\overline{b}_2.$
    \end{itemize}

    Thus we are left with $\overline{a}_1\overline{b}_1$. We show that this element is not zero in the coinvariants module:
    Consider the $GL_g(\mathbb{Z})$-invariant homomorphism $p:\mathfrak{B}_2\rightarrow \mathfrak{B}_2/\mathfrak{B}_1\xrightarrow{\overline{\omega}} \mathbb{Z}/2,$ where $\overline{\omega}$ sends an element $\overline{x}\overline{y}$ of $\mathfrak{B}_2/\mathfrak{B}_1$ to $\omega(x,y) \; \text{mod }2.$ Applying this homomorphism to $\overline{a}_1\overline{b}_1$ we get that $p(\overline{a}_1\overline{b}_1)=1\neq 0$ and therefore  $\overline{a}_1\overline{b}_1\neq 0.$
    
    All this shows that $(\mathfrak{B}_2)_{GL_g(\mathbb{Z})}\cong \langle \overline{1}, \overline{a}_1 \overline{b}_1\rangle \cong(\mathbb{Z}/2)^2.$
    
\textbf{In degree 3} the coinvariants module is generated by the products
$$\overline{a}_1\overline{a}_2\overline{a}_3,\quad \overline{a}_1\overline{a}_2\overline{b}_3,\quad \overline{a}_1\overline{a}_2\overline{b}_2,\quad \overline{a}_1\overline{b}_1\overline{b}_2,\quad \overline{a}_1\overline{b}_2\overline{b}_3,\quad \overline{b}_1\overline{b}_2\overline{b}_3.$$

Here is where we need  $g \geq 4$. 
    \begin{itemize}
    	\item Acting by $G = Id_g + E_{41}$ on $\overline{a}_1\overline{a}_2\overline{a}_3$ gives
    	\[
    	G \cdot \overline{a}_1\overline{a}_2\overline{a}_3 = (\overline{a_1 + a_4} )\overline{a}_2 \overline{a}_3 = \overline{a}_1\overline{a}_2\overline{a}_3 + \overline{a}_4\overline{a}_2\overline{a}_3.
    	\]
    	So in the coinvariants quotient $\overline{a}_4\overline{a}_2\overline{a}_3 = 0 = \overline{a}_1\overline{a}_2\overline{a}_3$. The action by the same element shows in the same way that in the coinvariants quotient $\overline{a}_1\overline{a}_2\overline{b}_3 =0$, $\overline{a}_1\overline{a}_2\overline{b}_2 =0$, and similarly $\overline{a}_1\overline{b}_1\overline{b}_2 =0 = \overline{a}_1\overline{b}_2\overline{b}_3 =\overline{b}_1\overline{b}_2\overline{b}_3$. 
    	\item Acting by  $G = Id_g + E_{21}$ on $\overline{a}_1\overline{a}_2\overline{b}_2$, whose transpose-inverse is $Id_g -E_{12}$, we have:
    	\[
    	G\cdot \overline{a}_1\overline{a}_2\overline{b}_2 = (\overline{a_1 + a_2})\overline{a}_2(\overline{b_2-b_1})= \overline{a}_1\overline{a}_2\overline{b}_2  - \overline{a}_1\overline{a}_2\overline{b}_1 + \overline{a}_2\overline{b}_2 -\overline{a}_2\overline{b}_1.
    	\]
    	So in the coinvariants quotient $- \overline{a}_1\overline{a}_2\overline{b}_1 + \overline{a}_2\overline{b}_2 -\overline{a}_2\overline{b}_1 =0$, but we already know that  in the quotient $ \overline{a}_1\overline{a}_2\overline{b}_1  = 0 = \overline{a}_2\overline{b}_1$, so finally $\overline{a}_2\overline{b}_2 = 0  = \overline{a}_1\overline{b}_1$.
    \end{itemize}	 
All this shows that $(\mathfrak{B}_3)_ {GL_g(\mathbb{Z})}$ is $\mathbb{Z}/2$, generated by $\overline{1}$.
\end{proof}

\subsection{Handlebodies}

Let $B_1,\ldots ,B_n$ be a collection of closed $3$-balls and $D_1,\ldots , D_m,$ $D'_1,\ldots ,D'_m$ be a collection of pairwise disjoint disks in $\bigcup \partial B_i.$ For each $1\leq i\leq m,$ consider a homeomorphism $\phi_i:D_i\rightarrow D'_i$. Denote $H$ the result of gluing along $\phi_1,$ then gluing along $\phi_2,$ and so on.
We say that $H$ is a \textit{handlebody} if after the final gluing if $H$ is connected. Equivalently, we say that a $3$-manifold with boundary $H$ is a handlebody if there exists a collection $\{D_1,\ldots ,D_m \}$ of properly embedded essential disks (i.e. properly embedded disk whose boundary does not bound a disk in the boundary of $H$) such that the complement of a regular neighbourhood of $\bigcup D_i$ is a collection of balls. Such family of disks is called a \textit{system of disks} of $H$ and we say that this system of disks is \textit{minimal} if its complement is connected.
The existence of such family of disks is ensured by \cite[Lemma~2.2]{jes}.

We now state a basic $3$-dimensional topology lemma that is a consequence of the loop theorem (cf. \cite[Theorem 3.1]{hatch}) and of which we will omit the proof.

\begin{lemma}
\label{lem_CBP_disks}
Let $D_\beta,$ $D_{\beta'}$ be two essential proper embedded disks in $\mathcal{H}_g$ with respective boundaries $\beta,$ $\beta'$ such that the union of $\beta$ and $\beta'$ bounds a subsurface of $\Sigma_{g,1}$ with two boundary components. There exist $g-1$ essential proper embedded disks $D_{\beta_1},\ldots ,D_{\beta_{g-1}}$ in $\mathcal{H}_g,$ with boundaries $\beta_1,\ldots ,\beta_{g-1}$ respectively, such that
$$Int(\mathcal{H}_g)-N\left( D_\beta \cup D_{\beta'} \cup D_{\beta_1} \cup \ldots \cup D_{\beta_{g-1}}\right)$$
is the disjoint union of two open $3$-balls.
\end{lemma}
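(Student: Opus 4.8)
The plan is to cut $\mathcal{H}_g$ open along $D_\beta\cup D_{\beta'}$, recognise what is left as a disjoint union of two handlebodies whose genera add up to $g-1$, take a minimal system of meridian disks in each of them, and glue everything back. First I would arrange $D_\beta\cap D_{\beta'}=\emptyset$: since $\beta$ and $\beta'$ are disjoint, the intersection consists of circles, and surgering $D_\beta$ along an innermost such circle on $D_{\beta'}$ (it bounds a subdisk of $D_{\beta'}$, and the two parallel disks cobound a ball by irreducibility of $\mathcal{H}_g$) produces a disk isotopic to $D_\beta$, still essential with boundary $\beta$, meeting $D_{\beta'}$ in fewer circles; iterating, assume $D_\beta\cap D_{\beta'}=\emptyset$.

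Set $M:=\mathcal{H}_g$ cut along $D_\beta\cup D_{\beta'}$. Its boundary is $\partial\mathcal{H}_g$ cut along $\beta\cup\beta'$, with the four new circles capped off by copies of $D_\beta$ and $D_{\beta'}$. Since $\beta$ and $\beta'$ are non-separating and homologous (a bounding pair) and cobound the subsurface $F$ of the hypothesis, cutting $\partial\mathcal{H}_g$ along $\beta\cup\beta'$ produces exactly two connected pieces, $F$ of genus $h$ and its complement $F'$ of genus $g-h-1$, each with two boundary circles; hence $\partial M$ is a closed genus-$h$ surface together with a closed genus-$(g-h-1)$ surface (note $g-h-1\geq 1$, since $\beta$ and $\beta'$ are not isotopic). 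The crux is that $M$ is a disjoint union of handlebodies, so — its boundary having two components — $M=P\sqcup P'$ with $P,P'$ handlebodies of genus $h$ and $g-h-1$. This is where the loop theorem enters: each component of $M$ is compact, orientable and irreducible (a $2$-sphere in it bounds a ball in $\mathcal{H}_g$, and an innermost-disk argument places that ball inside $M$ because $\partial(D_\beta\cup D_{\beta'})\subset\partial\mathcal{H}_g$), has connected boundary, and has free fundamental group (by van Kampen, cutting a handlebody along a disk replaces $\pi_1$ by a free factor or by $\pi_1\ast\mathbb{Z}$); and a compact orientable irreducible $3$-manifold with nonempty connected boundary and free $\pi_1$ is a handlebody — if its boundary were incompressible it would be $\pi_1$-injective, embedding a closed surface group into a free group, which is impossible, so by the loop theorem there is an essential compressing disk, and one inducts on $-\chi$ of the boundary. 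The hard part is precisely this structural statement; everything else is bookkeeping (and the Euler characteristic count $h+(g-h-1)=g-1$ is forced, so it also pins down the number of disks needed).

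Finally, choose minimal disk systems $E_1,\dots,E_h$ of $P$ and $E'_1,\dots,E'_{g-h-1}$ of $P'$, each cutting its handlebody into a single $3$-ball. After an isotopy pushing their boundaries off the cap-disks (possible since those caps are disks in the ambient boundary surfaces) we may assume $\partial E_i\subset F$ and $\partial E'_j\subset F'$, so that after regluing $P$ and $P'$ along $D_\beta\cup D_{\beta'}$ these $h+(g-h-1)=g-1$ disks are disjoint from one another and from $D_\beta$ and $D_{\beta'}$. Each is essential in $\mathcal{H}_g$: if $\partial E_i$ bounded a disk $\Delta$ in $\partial\mathcal{H}_g$, then either $\Delta\subset F$, contradicting that $E_i$ is essential in $P$, or $\Delta$ contains the positive-genus surface $F'$, which is absurd; similarly for the $E'_j$ using that $F$ has positive genus. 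Cutting $\mathcal{H}_g$ along all $g+1$ disks is the same as cutting $P$ along $\{E_i\}$ and $P'$ along $\{E'_j\}$, which yields exactly two open $3$-balls; taking $D_{\beta_1},\dots,D_{\beta_{g-1}}$ to be $E_1,\dots,E_h,E'_1,\dots,E'_{g-h-1}$ proves the lemma.
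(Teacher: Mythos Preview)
The paper does not actually prove this lemma; it only records that it is ``a basic $3$-dimensional topology lemma that is a consequence of the loop theorem'' and explicitly omits the argument. Your proof is a correct and complete realisation of that hint: disjoining the two disks by an innermost-circle surgery, cutting, identifying the resulting pieces as handlebodies via the loop-theorem characterisation (compact, orientable, irreducible, free $\pi_1$, connected boundary), and then taking minimal meridian systems in each piece is exactly the intended route.
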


\section{The Luft Group and CBP-twists}
\label{section_luft}
Denote by $\mathcal{L}_{g,1}$ the kernel of the map $\mathcal{B}_{g,1} \twoheadrightarrow Aut(\pi_1(\mathcal{H}_g))$, the \textit{Luft group,} which was identified by Luft in \cite{luft} as the ``twist group'' of the handlebody $\mathcal{H}_g$, and
by $\mathcal{LTB}_{g,1}$ the intersection $\mathcal{L}_{g,1} \cap \mathcal{TB}_{g,1}$, the Luft-Torelli group. In \cite{pitsch}, Pitsch characterized $\mathcal{LTB}_{g,1}$ as the group generated by contractible bounding pair twists (abbreviated CBP-twists). A CBP-twist is a map of the form $T_\beta T_{\beta'}^{-1},$ where $\beta$ and $\beta'$ are two homologous non-isotopic and disjoint simple closed curves on $\Sigma_{g,1}$ such that each one is not null-homologous in $H_1(\Sigma_{g,1};\mathbb{Z})$ and bounds a properly embedded disk in $\mathcal{H}_g.$ In all what follows we refer as CBP-twists of genus $k$ to those CBP-twists $T_{\beta}T_{\beta'}^{-1}$ such that the union of $\beta$ and $\beta$ bounds a subsurface of $\Sigma_{g,1}$ of genus $k$ with two boundary components.

Consider the short exact sequence given in \cite[Lemma 1]{pitsch},
\begin{equation*}
\xymatrix@C=7mm@R=10mm{1 \ar@{->}[r] & \mathcal{TB}_{g,1} \ar@{->}[r] & \mathcal{B}_{g,1} \ar@{->}[r]^-{\Psi} & GL_g(\mathbb{Z})\ltimes S_g(\mathbb{Z}) \ar@{->}[r] & 1 .}
\end{equation*}
Restricting this short exact sequence to the Luft group $\mathcal{L}_{g,1},$ we get that
\begin{proposition}
\label{prop_surj_Luft}
There is a short exact sequence
\begin{equation}
\label{ses_L}
\xymatrix@C=7mm@R=10mm{1 \ar@{->}[r] & \mathcal{LTB}_{g,1} \ar@{->}[r] & \mathcal{L}_{g,1} \ar@{->}[r]^-{\Psi} & S_g(\mathbb{Z}) \ar@{->}[r] & 1 .}
\end{equation}
\end{proposition}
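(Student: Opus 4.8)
The plan is to produce the short exact sequence in \eqref{ses_L} by restricting the short exact sequence
$1 \to \mathcal{TB}_{g,1}\to \mathcal{B}_{g,1}\xrightarrow{\Psi} GL_g(\mathbb{Z})\ltimes S_g(\mathbb{Z})\to 1$
of \cite[Lemma 1]{pitsch} to the subgroup $\mathcal{L}_{g,1}\subseteq \mathcal{B}_{g,1}$. Three things must be checked: (a) the kernel of $\Psi$ restricted to $\mathcal{L}_{g,1}$ is exactly $\mathcal{LTB}_{g,1}$; (b) the image of $\mathcal{L}_{g,1}$ under $\Psi$ lands inside the subgroup $S_g(\mathbb{Z})$; and (c) this restricted map onto $S_g(\mathbb{Z})$ is surjective. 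Step (a) is immediate from the definitions, since $\ker(\Psi|_{\mathcal{L}_{g,1}}) = \mathcal{L}_{g,1}\cap \ker(\Psi|_{\mathcal{B}_{g,1}}) = \mathcal{L}_{g,1}\cap \mathcal{TB}_{g,1} = \mathcal{LTB}_{g,1}$.

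For step (b), I would argue that an element $\phi\in\mathcal{L}_{g,1}$, lying in the kernel of $\mathcal{B}_{g,1}\twoheadrightarrow \operatorname{Aut}(\pi_1(\mathcal{H}_g))$, in particular acts trivially on $H_1(\mathcal{H}_g;\mathbb{Z})$. Under the standard identification $H_1(\Sigma_{g,1};\mathbb{Z}) = A\oplus B$, where $A$ is the Lagrangian generated by the $a_i$ and is precisely the kernel of $H_1(\Sigma_{g,1};\mathbb{Z})\to H_1(\mathcal{H}_g;\mathbb{Z})$, an element of $\mathcal{B}_{g,1}$ has image of the block form $\left(\begin{smallmatrix} G & * \\ 0 & {}^tG^{-1}\end{smallmatrix}\right)$ with $G\in GL_g(\mathbb{Z})$ recording the induced action on $H_1(\mathcal{H}_g;\mathbb{Z}) \cong B$ and the upper-right block lying in $S_g(\mathbb{Z})$ (symmetric matrices, coming from the action on $A$); this is exactly the description of $\Psi(\mathcal{B}_{g,1}) = GL_g(\mathbb{Z})\ltimes S_g(\mathbb{Z})$ from \cite[Lemma 1]{pitsch}. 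If $\phi$ acts trivially on $\pi_1(\mathcal{H}_g)$, then in particular $G = \mathrm{Id}_g$, so $\Psi(\phi) = \left(\begin{smallmatrix}\mathrm{Id}_g & N\\ 0 & \mathrm{Id}_g\end{smallmatrix}\right)$ with $N\in S_g(\mathbb{Z})$, i.e. $\Psi(\phi)\in S_g(\mathbb{Z})$. Hence $\Psi(\mathcal{L}_{g,1})\subseteq S_g(\mathbb{Z})$, and together with the kernel computation this gives exactness at $\mathcal{L}_{g,1}$ and at $\mathcal{LTB}_{g,1}$.

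The genuine content — and the step I expect to be the main obstacle — is surjectivity onto $S_g(\mathbb{Z})$ in step (c). The idea is to exhibit, for each generator of $S_g(\mathbb{Z})$ as an abelian group, a lift to $\mathcal{L}_{g,1}$. The symmetric matrices $S_g(\mathbb{Z})$ are generated by the diagonal matrices $E_{ii}$ and the symmetric matrices $E_{ij}+E_{ji}$ for $i\neq j$. For $E_{ii}$ one uses the Dehn twist $T_{\alpha_i}$ along the curve $\alpha_i$: this curve bounds a disk in $\mathcal{H}_g$ (it is a meridian), so $T_{\alpha_i}\in \mathcal{B}_{g,1}$, and a Dehn twist along a curve bounding a disk in $\mathcal{H}_g$ lies in the Luft group $\mathcal{L}_{g,1}$ since it acts trivially on $\pi_1(\mathcal{H}_g)$; from the Dehn twist formula $\Psi(T_{\alpha_i})$ fixes the $a_j$ and sends $b_i\mapsto b_i \pm a_i$, which is the transvection corresponding to $\pm E_{ii}$. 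For the off-diagonal generators $E_{ij}+E_{ji}$, one takes a Dehn twist along the curve $\alpha_{ij}$ which is the boundary of a regular neighbourhood of $\alpha_i\cup\alpha_j$ together with an arc joining them — equivalently, a curve in the homology class $a_i+a_j$ that bounds a disk in $\mathcal{H}_g$ (the boundary connect sum of the two meridian disks) — so again the twist lies in $\mathcal{L}_{g,1}$, and its symplectic action is the transvection in the Lagrangian direction $a_i+a_j$, giving $E_{ii}+E_{jj}+E_{ij}+E_{ji}$; combining with the diagonal lifts already obtained produces $E_{ij}+E_{ji}$. Since these transvections generate $S_g(\mathbb{Z})$, the restricted map is onto, completing the proof. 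The only subtlety to address carefully is that each such curve is genuinely non-separating and bounds an embedded disk in $\mathcal{H}_g$, and that the corresponding twist kills the $\pi_1(\mathcal{H}_g)$-action and not merely the homology action; this is where one invokes that twisting along a disk-bounding curve is, by Luft's description of $\mathcal{L}_{g,1}$ as the twist group of $\mathcal{H}_g$, automatically in $\mathcal{L}_{g,1}$.
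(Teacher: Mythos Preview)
Your approach is essentially identical to the paper's: restrict the short exact sequence of \cite[Lemma~1]{pitsch} to $\mathcal{L}_{g,1}$, observe that elements of $\mathcal{L}_{g,1}$ act trivially on $H_1(\mathcal{H}_g)$ so $\Psi(\mathcal{L}_{g,1})\subset S_g(\mathbb{Z})$, and then lift the generators $E_{ii}$ and $E_{ij}+E_{ji}$ of $S_g(\mathbb{Z})$ by explicit Dehn twists along curves bounding disks in $\mathcal{H}_g$. One caveat on conventions: in this paper it is the curves $\beta_i$ (not $\alpha_i$) that bound meridian disks in $\mathcal{H}_g$, so the kernel of $H_1(\Sigma_{g,1})\to H_1(\mathcal{H}_g)$ is $B$ rather than $A$ and the $S_g(\mathbb{Z})$ block sits in the lower-left; after swapping $\alpha\leftrightarrow\beta$ and $A\leftrightarrow B$ throughout, your lifts $T_{\beta_i}^{\pm1}$ and $T_{\gamma_{ij}}$ (your ``$\alpha_{ij}$'') are exactly those the paper uses.
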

 
\begin{proof}
Notice that $\mathcal{L}_{g,1}$ is contained in the kernel of $\mathcal{B}_{g,1} \twoheadrightarrow Aut(H_1(\mathcal{H}_g))=GL_g(\mathbb{Z}).$ Then $\Psi(\mathcal{L}_{g,1})\subset S_g(\mathbb{Z}).$ Now we prove that $\Psi:\mathcal{L}_{g,1}\rightarrow S_g(\mathbb{Z})$ is surjective. 
Recall that $S_g(\mathbb{Z})$ is generated by the family of matrices $\{E_{ii} \mid 1\leq i \leq g\}\cup\{SE_{ij} \mid 1\leq i<j \leq g\},$
where $E_{ij}$ denotes the matrix with $1$ at the position $(i,j)$ and $0$'s elsewhere, and $SE_{ij}=E_{ij}+E_{ji}$ for $i\neq j$.
Thus it is enough to find a preimage for each $E_{ii}$ and $SE_{ij}.$
Consider $\beta_i,$ $\beta_j$ and $\gamma_{ij}$ the curves given in the following figure:

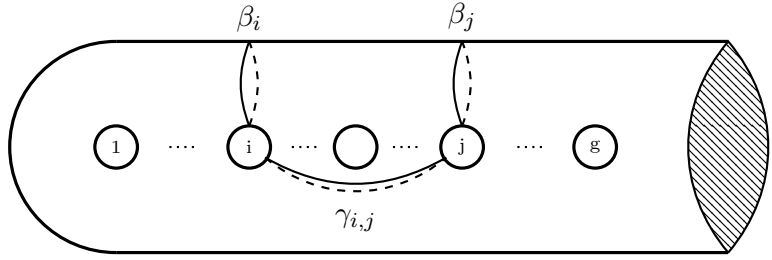
\begin{figure}[H]
\begin{center}
\begin{tikzpicture}[scale=.7]
\draw[very thick] (-4.5,-2) -- (7,-2);
\draw[very thick] (-4.5,2) -- (7,2);
\draw[very thick] (-4.5,2) arc [radius=2, start angle=90, end angle=270];
\draw[very thick] (-4.5,0) circle [radius=.4];
\draw[thick, dotted] (-3.5,0) -- (-3,0);
\draw[very thick] (-2,0) circle [radius=.4];
\draw[thick, dotted] (-1.2,0) -- (-0.7,0);
\draw[thick, dotted] (0.7,0) -- (1.2,0);
\draw[very thick] (2,0) circle [radius=.4];
\draw[thick, dotted] (3.5,0) -- (3,0);
\draw[very thick] (4.5,0) circle [radius=.4];

\draw[very thick] (0,0) circle [radius=.4];

\draw[thick] (-1.7,-0.2) to [out=-30,in=210] (1.7,-0.2);
\draw[thick, dashed] (-1.7,-0.2) to [out=-40,in=220] (1.7,-0.2);
\node [below] at (0,-1) {$\gamma_{i,j}$};

\draw[thick,dashed] (-2,0.4) to [out=70,in=-70] (-2,2);
\draw[thick] (-2,0.4) to [out=110,in=-110] (-2,2);

\draw[thick,dashed] (2,0.4) to [out=70,in=-70] (2,2);
\draw[thick] (2,0.4) to [out=110,in=-110] (2,2);

\node[above] at (-2,2) {$\beta_i$};
\node[above] at (2,2) {$\beta_j$};

\node at (-4.5,0) {\tiny{1}};
\node at (-2,0) {\tiny{i}};
\node at (2,0) {\tiny{j}};
\node at (4.5,0) {\tiny{g}};

\draw[thick,pattern=north west lines] (7,-2) to [out=130,in=-130] (7,2) to [out=-50,in=50] (7,-2);

\end{tikzpicture}
\end{center}
\caption{Curves involved in the lift of $SE_{ij}$ to $\mathcal{L}_{g,1}$.}
\end{figure}

Notice that the curves $\beta_k,$ $\gamma_{ij}$ are contractible in $\mathcal{H}_g.$ Then $T_{\beta_k},$ $T_{\beta_i}^{-1}T_{\gamma_{ij}}T_{\beta_j}^{-1}$ are elements of the Luft group $\mathcal{L}_{g,1}$ with the following images through the symplectic representation:
$$\Psi(T_{\beta_k}^{-1})=\left(\begin{matrix}
Id & 0 \\
E_{kk} & Id
\end{matrix}\right), \qquad
\Psi(T_{\beta_i}^{-1}T_{\gamma_{ij}}T_{\beta_j}^{-1})=\left(\begin{matrix}
Id & 0 \\
SE_{ij} & Id
\end{matrix}\right).$$
\end{proof}

\begin{proposition}
\label{prop_prod_B}
For every $h\in \mathcal{B}_{g,1},$ there exist elements $l\in \mathcal{L}_{g,1},$ $f\in \mathcal{AB}_{g,1}$ and $\xi_b\in \mathcal{TB}_{g,1}$ such that
$h=\xi_b f l,$ i.e.
$$\mathcal{B}_{g,1}=\mathcal{TB}_{g,1}\cdot \mathcal{AB}_{g,1} \cdot\mathcal{L}_{g,1}.$$
\end{proposition}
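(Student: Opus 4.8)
The plan is a short diagram chase built on the two short exact sequences already at hand. Write $P:=GL_g(\mathbb{Z})\ltimes S_g(\mathbb{Z})$ for the image of $\mathcal{B}_{g,1}$ under $\Psi$ (the sequence of \cite[Lemma 1]{pitsch}), and let $q\colon P\twoheadrightarrow GL_g(\mathbb{Z})$ be the projection onto the Levi factor, whose kernel is the normal abelian subgroup $S_g(\mathbb{Z})$. Recall from the preliminaries that $\mathcal{AB}_{g,1}$ acts on $H$ through the block-diagonal matrices $\left(\begin{smallmatrix} G & 0\\ 0 & {}^tG^{-1}\end{smallmatrix}\right)$, so $\Psi(\mathcal{AB}_{g,1})$ is contained in the Levi factor $GL_g(\mathbb{Z})\subseteq P$. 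The crucial input — discussed in the last paragraph — is that this inclusion is an equality, i.e. $\Psi$ restricts to a surjection $\mathcal{AB}_{g,1}\twoheadrightarrow GL_g(\mathbb{Z})$.

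Granting that, let $h\in\mathcal{B}_{g,1}$. Choose $f\in\mathcal{AB}_{g,1}$ with $\Psi(f)=q(\Psi(h))\in GL_g(\mathbb{Z})\subseteq P$ (so $\Psi(f)$ has trivial $S_g(\mathbb{Z})$-component). Then $q\bigl(\Psi(f^{-1}h)\bigr)=q(\Psi(f))^{-1}q(\Psi(h))=1$, hence $\Psi(f^{-1}h)\in\ker q=S_g(\mathbb{Z})$. By Proposition~\ref{prop_surj_Luft}, $\Psi|_{\mathcal{L}_{g,1}}\colon \mathcal{L}_{g,1}\to S_g(\mathbb{Z})$ is onto, so we may pick $l\in\mathcal{L}_{g,1}$ with $\Psi(l)=\Psi(f^{-1}h)$, i.e. $\Psi(h)=\Psi(f)\Psi(l)$. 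Since $f\in\mathcal{AB}_{g,1}\subseteq\mathcal{B}_{g,1}$ and $l\in\mathcal{L}_{g,1}\subseteq\mathcal{B}_{g,1}$, the element $\xi_b:=h\,l^{-1}f^{-1}$ lies in $\mathcal{B}_{g,1}$, and $\Psi(\xi_b)=\Psi(h)\Psi(l)^{-1}\Psi(f)^{-1}=\Psi(f)\Psi(l)\Psi(l)^{-1}\Psi(f)^{-1}=1$; therefore $\xi_b\in\mathcal{B}_{g,1}\cap\ker\Psi=\mathcal{TB}_{g,1}$. This gives $h=\xi_b\,f\,l$, which is the desired factorization, and hence $\mathcal{B}_{g,1}=\mathcal{TB}_{g,1}\cdot\mathcal{AB}_{g,1}\cdot\mathcal{L}_{g,1}$.

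Thus the only real work is the surjectivity $\mathcal{AB}_{g,1}\twoheadrightarrow GL_g(\mathbb{Z})$, which I expect to be the main obstacle, since the preliminaries record only that the $\mathcal{AB}_{g,1}$-action \emph{factors through} $GL_g(\mathbb{Z})$. To establish it I would exhibit, for a standard generating set of $GL_g(\mathbb{Z})$, diffeomorphisms of $\Sigma_{g,1}$ extending over both $\mathcal{H}_g$ and $-\mathcal{H}_g$ that realize them on $H_1$: permuting the handles realizes the permutation matrices; a "flip" of a single handle realizes $\mathrm{diag}(-1,1,\dots,1)$, supplying the missing determinant $-1$; and a handle slide of one handle over another (visibly extending to either handlebody) realizes on $H_1$ a block-diagonal symplectic matrix whose $GL_g$-part is an elementary matrix $Id_g+E_{ij}$. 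As permutations, that one determinant $-1$ matrix, and the elementary matrices generate $GL_g(\mathbb{Z})$, surjectivity follows; if this fact is instead quoted from the literature, the proposition reduces to the chase of the previous paragraph. (Note also that, since $\mathcal{TB}_{g,1}\trianglelefteq\mathcal{B}_{g,1}$, the placement of the $\mathcal{TB}_{g,1}$-factor in the product is immaterial, so the elementwise statement $h=\xi_b f l$ and the set equality are equivalent.)
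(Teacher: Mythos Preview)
Your proof is correct and follows essentially the same route as the paper's: both use the short exact sequence with kernel $\mathcal{TB}_{g,1}$ and image $GL_g(\mathbb{Z})\ltimes S_g(\mathbb{Z})$, lift the $GL_g(\mathbb{Z})$-part of $\Psi(h)$ to $\mathcal{AB}_{g,1}$ and the $S_g(\mathbb{Z})$-part to $\mathcal{L}_{g,1}$ via Proposition~\ref{prop_surj_Luft}, and then read off $\xi_b\in\mathcal{TB}_{g,1}$ from the kernel. The only difference is that the surjectivity $\Psi(\mathcal{AB}_{g,1})=GL_g(\mathbb{Z})$, which you flag as the main obstacle and sketch via handle permutations, flips, and slides, is simply quoted in the paper from \cite[Lemma~3]{pitsch}; with that citation in hand the proof collapses to exactly your diagram chase.
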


\begin{proof}
Consider the short exact sequence
\begin{equation}
\label{ses_B}
\xymatrix@C=7mm@R=10mm{1 \ar@{->}[r] & \mathcal{TB}_{g,1} \ar@{->}[r] & \mathcal{B}_{g,1} \ar@{->}[r]^-{\Psi} & GL_g(\mathbb{Z})\ltimes S_g(\mathbb{Z}) \ar@{->}[r] & 1 }
.\end{equation}
By \cite[Lemma 3]{pitsch} and Proposition \ref{prop_surj_Luft} we know that $\Psi(\mathcal{AB}_{g,1})\cong GL_g(\mathbb{Z})$ and
$\Psi(\mathcal{L}_{g,1})\cong S_g(\mathbb{Z}).$
Then, given $h\in \mathcal{B}_{g,1}$ there exist $f\in \mathcal{AB}_{g,1},$ $l\in \mathcal{L}_{g,1}$ such that
$\Psi(h)=\Psi(fl),$
and by the short exact sequence \eqref{ses_B}, there exists an element $\xi_b\in \mathcal{TB}_{g,1}$ such that $h=\xi_b f l.$

\end{proof}

\begin{proposition}
\label{prop_trans_B}
The group $\mathcal{B}_{g,1}$ acts transitively on CBP-twists of a given genus.
\end{proposition}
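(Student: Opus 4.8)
The plan is to apply the change of coordinates principle inside the handlebody $\mathcal{H}_g$, with Lemma~\ref{lem_CBP_disks} as the essential geometric input. First I would reduce the statement: since an orientation‑preserving $\varphi\in\mathcal{B}_{g,1}$ conjugates $T_\beta T_{\beta'}^{-1}$ to $T_{\varphi(\beta)}T_{\varphi(\beta')}^{-1}$, it suffices to show that for any two CBP‑twists $T_\beta T_{\beta'}^{-1}$ and $T_\delta T_{\delta'}^{-1}$ of the same genus $k$ there is $\varphi\in\mathcal{B}_{g,1}$ with $\varphi(\beta)$ isotopic to $\delta$ and $\varphi(\beta')$ isotopic to $\delta'$, the isotopies fixing the boundary disk $\Delta=\partial\Sigma_{g,1}$. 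Such a $\varphi$ will be obtained as the boundary restriction $\Phi|_{\Sigma_g}$ of a homeomorphism $\Phi$ of $\mathcal{H}_g$ that is orientation‑preserving, the identity on $\Delta$, and sends the proper disk $D_\beta$ to $D_\delta$ and $D_{\beta'}$ to $D_{\delta'}$; by the very definitions of $\mathcal{B}_{g,1}$ and of a CBP‑twist this is exactly what is needed.

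Next I would analyse the cut manifold associated with a genus‑$k$ CBP‑twist $T_\beta T_{\beta'}^{-1}$. Fix disjoint essential proper disks $D_\beta,D_{\beta'}\subset\mathcal{H}_g$ with boundaries $\beta,\beta'$ and the genus‑$k$ subsurface $S\subset\Sigma_{g,1}$ with $\partial S=\beta\sqcup\beta'$. Because $[\beta]+[\beta']=[\partial S]=0$ in $H_1(\Sigma_g;\mathbb{Z}/2)$ and $H_2(\mathcal{H}_g;\mathbb{Z}/2)=0$, the properly embedded surface $D_\beta\sqcup D_{\beta'}$ is null‑homologous rel boundary mod $2$, hence separates $\mathcal{H}_g$; so cutting $\mathcal{H}_g$ along $D_\beta\cup D_{\beta'}$ gives a disjoint union $V\sqcup W$, with $\partial V$ equal to $S$ with its two boundary circles capped (a closed genus‑$k$ surface) and $\partial W$ equal to $\Sigma_g\setminus\mathring S$ capped (genus $g-1-k$), the disk $\Delta$ lying in $\partial W$. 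Now apply Lemma~\ref{lem_CBP_disks} to extend $\{D_\beta,D_{\beta'}\}$ to disks $D_{\beta_1},\dots,D_{\beta_{g-1}}$ cutting $\mathcal{H}_g$ into two balls. Each $D_{\beta_i}$, being connected and disjoint from $D_\beta\cup D_{\beta'}$, lies entirely in $V$ or in $W$; since the two resulting balls are obtained by cutting $V$ and $W$ separately and both are connected, each of $V,W$ is cut into a single ball, so $V$ is a genus‑$k$ handlebody and $W$ a genus‑$(g-1-k)$ handlebody, and $\mathcal{H}_g$ is recovered from $V\sqcup W$ by regluing the two copies of $D_\beta$ and the two copies of $D_{\beta'}$.

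The key point is then that the data $(\mathcal{H}_g;V,W,D_\beta,D_{\beta'},\Delta)$ depends, up to orientation‑preserving homeomorphism, only on $g$ and $k$: $V\cong\mathcal{H}_k$ and $W\cong\mathcal{H}_{g-1-k}$ are standard, the regluing loci form an ordered pair of disjoint disks in $\partial V$ and in $\partial W$, $\Delta$ is one further disjoint disk in $\partial W$, and the mapping class group of a surface acts transitively on ordered tuples of disjoint disks, each such rearrangement extending over the corresponding handlebody; one checks these moves can be taken orientation‑preserving and the identity on $\Delta$. Carrying both CBP‑twists to this common model and composing yields the homeomorphism $\Phi$, hence $\varphi\in\mathcal{B}_{g,1}$ with $\varphi(\beta)$ isotopic to $\delta$ and $\varphi(\beta')$ isotopic to $\delta'$, so $\varphi\,(T_\beta T_{\beta'}^{-1})\,\varphi^{-1}=T_\delta T_{\delta'}^{-1}$, which proves transitivity.

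I expect the main obstacle to be the structural step: upgrading $V$ and $W$ from merely being compact irreducible $3$‑manifolds with prescribed boundary genus to being honest handlebodies — this is precisely where Lemma~\ref{lem_CBP_disks} is indispensable — together with the careful bookkeeping of the change‑of‑coordinates moves (disjointness of $D_\beta,D_{\beta'},\Delta$, control of orientations, and the marking by $\Delta$ so that the resulting $\varphi$ genuinely represents an element of $\mathcal{M}_{g,1}$ and not merely of the mapping class group of the closed surface).
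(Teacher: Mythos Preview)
Your argument is correct and uses the same key input, Lemma~\ref{lem_CBP_disks}, but the route differs from the paper's. The paper extends each CBP pair $\{D_\beta,D_{\beta'}\}$ and $\{D_\zeta,D_{\zeta'}\}$ to a full system of $g+1$ meridian disks via Lemma~\ref{lem_CBP_disks}, applies the change--of--coordinates principle on $\Sigma_{g,1}$ to the two resulting curve systems to produce a surface homeomorphism $\phi$, and then quotes an external result (\cite[Lemma~2.9]{jes}) to extend $\phi$ over $\mathcal{H}_g$. You instead cut $\mathcal{H}_g$ along $D_\beta\cup D_{\beta'}$, use Lemma~\ref{lem_CBP_disks} to certify that the two pieces are honest handlebodies of genera $k$ and $g-1-k$, and then build the handlebody homeomorphism $\Phi$ directly from the uniqueness of this glued model. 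In effect you are reproving the relevant special case of \cite[Lemma~2.9]{jes} by hand. Your approach is more self--contained and makes the role of Lemma~\ref{lem_CBP_disks} very transparent; the paper's is shorter but relies on the external extension lemma and on the (implicit) verification that the two full curve systems on $\Sigma_{g,1}$ have the same combinatorial type. The bookkeeping you flag at the end---matching $\Phi_V$ and $\Phi_W$ on the gluing disks, preserving orientation, and fixing $\Delta$---is genuine but routine, since moving disjoint disks in $\partial\mathcal{H}_m$ can be achieved by an ambient isotopy of the boundary, which extends over the handlebody via a collar.
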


\begin{proof}
Let $T_\zeta T^{-1}_{\zeta'},$ $T_\beta T_{\beta'}^{-1}$ be CBP-twists of genus $k$ on $\Sigma_{g,1}.$ We prove that there exists $\psi\in\mathcal{B}_{g,1}$ such that $\psi(\beta)=\zeta,$ $\psi(\beta')=\zeta'$ getting that
$$\psi(T_\beta T_{\beta'}^{-1})\psi^{-1}=T_{\psi(\beta)} T_{\psi(\beta')}^{-1}=T_\zeta T_{\zeta'}^{-1}.$$

Since $T_\beta T_{\beta'}^{-1},$ $T_\zeta T_{\zeta'}^{-1}$ are CBP-twists of genus $k,$ there exist properly embedded disks $D_{\beta},D_{\beta'}, D_{\zeta},D_{\zeta'}$ in $\mathcal{H}_g$ with respective boundaries $\beta, \beta', \zeta, \zeta'.$ Then,
by Lemma \ref{lem_CBP_disks}, there exist $g-1$ essential proper embedded disks $D_{\beta_1},\ldots ,D_{\beta_{g-1}}$ (resp. $D_{\zeta_1},\ldots ,D_{\zeta_{g-1}}$) in $\mathcal{H}_g,$ with boundaries $\beta_1,\ldots ,\beta_{g-1}$ (resp. $\zeta_1,\ldots ,\zeta_{g-1}$), such that
\begin{align*}
& Int(\mathcal{H}_g)-N\left( D_\beta \cup D_{\beta'} \cup D_{\beta_1} \cup \ldots \cup D_{\beta_{g-1}}\right) \\
\big(\text{resp.}\quad & Int(\mathcal{H}_g)-N\left( D_\zeta \cup D_{\zeta'} \cup D_{\zeta_1} \cup \ldots \cup D_{\zeta_{g-1}}\right)\;\big)
\end{align*}
is the disjoint union of two open $3$-balls.
 
Since $T_\zeta T^{-1}_{\zeta'},$ $T_\beta T_{\beta'}^{-1}$ are BP-maps of the same genus, by the change of coordinates principle (cf. \cite[Section 1.3]{farb}),
there is a homeomorphism $\phi$ from $\Sigma_{g,1}$ to $\Sigma_{g,1}$ sending $\{\beta, \beta', \beta_1 , \ldots , \beta_{g-1}\}$ to $\{\zeta, \zeta', \zeta_1 , \ldots , \zeta_{g-1}\}$ respectively.
By \cite[Lemma 2.9]{jes}, $\phi$ extends to a homeomorphism $\psi$ on $\mathcal{H}_g$ and therefore $\phi\in \mathcal{B}_{g,1}.$
\end{proof}

\begin{proposition}
\label{prop_CBP_1}
Every CBP-twist of genus $k$ is a product of $k$ CBP-twists of genus $1.$
\end{proposition}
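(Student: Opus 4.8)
The plan is to reduce a genus-$k$ CBP-twist to a concatenation of genus-$1$ CBP-twists by exhibiting a standard model and then transporting the general case to it via Proposition \ref{prop_trans_B}. First I would recall that by Proposition \ref{prop_trans_B} the group $\mathcal{B}_{g,1}$ acts transitively on CBP-twists of a given genus; hence it suffices to prove the statement for one convenient representative $T_\beta T_{\beta'}^{-1}$ of genus $k$, since conjugation by an element of $\mathcal{B}_{g,1}$ sends genus-$j$ CBP-twists to genus-$j$ CBP-twists and respects products. So I would fix the ``standard'' genus-$k$ CBP-twist associated to the subsurface on the first $k$ handles, as in the figure in Section 2, with $\beta$, $\beta'$ cobounding a genus-$k$ subsurface whose third boundary curve $\gamma$ is bounding.

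Next, the core geometric step: inside that genus-$k$ subsurface choose a chain of separating-type curves $\gamma_0 = \beta'$, $\gamma_1, \gamma_2, \ldots, \gamma_{k-1}, \gamma_k = \beta$, where $\gamma_j$ is a simple closed curve cutting off a genus-$j$ subsurface (containing the first $j$ handles) together with $\beta'$, and each consecutive pair $\gamma_{j-1}, \gamma_j$ cobounds a genus-$1$ subsurface with two boundary components. Each such pair $T_{\gamma_j} T_{\gamma_{j-1}}^{-1}$ is then a genus-$1$ bounding pair twist; one must also check it is a \emph{CBP}-twist, i.e. that each $\gamma_j$ bounds a properly embedded disk in $\mathcal{H}_g$ and is not null-homologous --- this holds because the $\gamma_j$ are built from the handle-cores $\beta_i$ and meridian-type curves exactly as in the curves $\beta_i, \gamma_{ij}$ of the figure preceding Proposition \ref{prop_surj_Luft}, which were already observed to be contractible in $\mathcal{H}_g$. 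The telescoping identity
\[
T_\beta T_{\beta'}^{-1} = \bigl(T_{\gamma_k} T_{\gamma_{k-1}}^{-1}\bigr)\bigl(T_{\gamma_{k-1}} T_{\gamma_{k-2}}^{-1}\bigr)\cdots \bigl(T_{\gamma_1} T_{\gamma_0}^{-1}\bigr)
\]
then expresses $T_\beta T_{\beta'}^{-1}$ as a product of $k$ genus-$1$ CBP-twists. The telescoping is literally valid only if the relevant Dehn twists commute, which they do here: the curves $\gamma_{j-1}$ and $\gamma_j$ cobound an annulus-free genus-$1$ subsurface, so $\gamma_j$ can be isotoped off $\gamma_{j-1}$ and all these twists pairwise commute (they are supported in disjoint or nested sub-annuli of the genus-$k$ piece), so the product really collapses.

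The main obstacle I anticipate is not the algebra of the telescoping but the careful construction of the chain $\gamma_0, \ldots, \gamma_k$ so that \emph{simultaneously}: (a) consecutive curves cobound a genus-$1$ two-holed subsurface, (b) every $\gamma_j$ is homologically nontrivial in $\Sigma_{g,1}$ (otherwise $T_{\gamma_j} T_{\gamma_{j-1}}^{-1}$ degenerates and is not a genuine CBP-twist), and (c) every $\gamma_j$ bounds a properly embedded disk in the inner handlebody $\mathcal{H}_g$. Condition (c) is the delicate one: I would verify it by exhibiting each $\gamma_j$ explicitly as the boundary of a disk obtained by banding together the meridian disks of the first $j$ handles, in the same spirit as the disks $D_{\beta_i}$ appearing in Lemma \ref{lem_CBP_disks} and in the proof of Proposition \ref{prop_surj_Luft}, and invoking that these bandings stay properly embedded. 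Once the chain is in place with properties (a)--(c), the genus-$1$ factors are honest CBP-twists and the proof concludes. Finally, transporting back by the element of $\mathcal{B}_{g,1}$ furnished by Proposition \ref{prop_trans_B} yields the statement for an arbitrary genus-$k$ CBP-twist.
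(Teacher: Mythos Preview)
Your proposal is correct and follows essentially the same route as the paper: pick a standard genus-$k$ CBP-twist, exhibit an explicit chain $\zeta=\zeta_0,\zeta_1,\dots,\zeta_k=\zeta'$ of disk-bounding curves so that consecutive pairs give genus-$1$ CBP-twists, telescope, and then use Proposition~\ref{prop_trans_B} to conjugate by some $h\in\mathcal{B}_{g,1}$ and carry the factorization to an arbitrary CBP-twist (noting that $h$ preserves the property of being a genus-$1$ CBP-twist).

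One small correction: your worry that ``the telescoping is literally valid only if the relevant Dehn twists commute'' is misplaced. The identity
\[
\bigl(T_{\gamma_k}T_{\gamma_{k-1}}^{-1}\bigr)\bigl(T_{\gamma_{k-1}}T_{\gamma_{k-2}}^{-1}\bigr)\cdots\bigl(T_{\gamma_1}T_{\gamma_0}^{-1}\bigr)=T_{\gamma_k}T_{\gamma_0}^{-1}
\]
is pure associative cancellation and needs no commutativity whatsoever. Disjointness of $\gamma_{j-1}$ and $\gamma_j$ is needed only so that each factor is an honest bounding-pair map, not for the product to collapse. The paper simply writes down the product and cancels, with no commutativity discussion.
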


\begin{proof}
Let $T_\beta T_{\beta'}^{-1}$ be a CBP-twist of genus $k.$ Consider the following simple closed curves in the standarly embedded surface $\Sigma_{g,1}:$

\begin{figure}[H]
\begin{center}
\begin{tikzpicture}[scale=.7]
\draw[very thick] (-12,-2) -- (3,-2);
\draw[very thick] (-12,2) -- (3,2);
\draw[thick, dotted] (-7.5,0) -- (-6.5,0);
\draw[very thick] (-12,2) arc [radius=2, start angle=90, end angle=270];

\draw[very thick] (-12,0) circle [radius=.4];
\draw[very thick] (-9.5,0) circle [radius=.4];
\draw[very thick] (-4.5,0) circle [radius=.4];
\draw[very thick] (-2,0) circle [radius=.4];
\draw[very thick] (1,0) circle [radius=.4];

\draw[thick,dashed] (-2,0.4) to [out=70,in=-70] (-2,2);
\draw[thick] (-2,0.4) to [out=110,in=-110] (-2,2);

\draw[thick,dashed] (-2,-0.4) to [out=-70,in=70] (-2,-2);
\draw[thick] (-2,-0.4) to [out=-110,in=110] (-2,-2);

\draw[thick] (-2.4,0) to [out=90,in=0] (-4.5,1.6) to [out=180,in=0] (-9.5,1.6) to [out=180,in=90] (-11,0) to [out=-90,in=110] (-10.8,-2);
\draw[thick, dashed] (-2.4,0) to [out=90,in=0] (-4.5,1.2) to [out=180,in=0] (-9.5,1.2) to [out=180,in=90] (-10.6,0) to [out=-90,in=70] (-10.8,-2);

\draw[thick] (-2.4,0) to [out=90,in=0] (-4.5,1.3) to [out=180,in=0] (-7,1.3) to [out=180,in=90] (-8.5,0) to [out=-90,in=110] (-8.3,-2);
\draw[thick, dashed] (-2.4,0) to [out=90,in=0] (-4.5,0.9) to [out=180,in=0] (-7,0.9) to [out=180,in=90] (-8.1,0) to [out=-90,in=70] (-8.3,-2);

\draw[thick] (-2.4,0) to [out=90,in=0] (-4.5,1) to [out=180,in=90] (-6,0) to [out=-90,in=110] (-5.8,-2);
\draw[thick, dashed] (-2.4,0) to [out=90,in=0] (-4.5,0.6) to [out=180,in=90] (-5.6,0) to [out=-90,in=70] (-5.8,-2);

\node [above] at (-2,2) {$\zeta$};
\node [below] at (-2,-2) {$\zeta'$};
\node [below] at (-10.8,-2) {$\zeta_1$};
\node [below] at (-8.3,-2) {$\zeta_2$};
\node [below] at (-5.8,-2) {$\zeta_{k-1}$};

\draw[thick, dotted] (-7.5,-2.6) -- (-7,-2.6);

\node at (-12,0) {\tiny{1}};
\node at (-9.5,0) {\tiny{2}};
\node at (-2,0) {\tiny{k}};
\node at (-4.5,0) {\tiny{k-1}};
\node at (1,0) {\tiny{g}};

\draw[thick, dotted] (-1,0) -- (0,0);

\draw[thick,pattern=north west lines] (3,-2) to [out=130,in=-130] (3,2) to [out=-50,in=50] (3,-2);
\end{tikzpicture}
\end{center}
\caption{CBP-twist of genus $k$ as a product of $k$ CBP-twist of genus $1$.}
\end{figure}
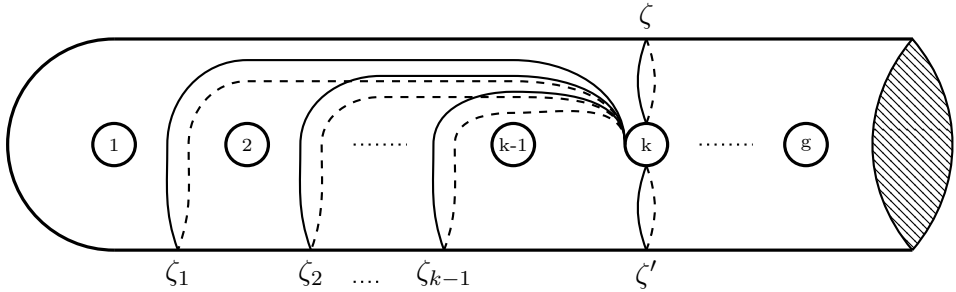

Observe that $T_\zeta T_{\zeta'}^{-1}$ is a CBP-twist of genus $k$ and that for $i=0,\ldots k-1,$ the maps $T_{\zeta_i} T_{\zeta_{i+1}}^{-1}$ are CBP-twists of genus $1,$  where $\zeta_0=\zeta,$ $\zeta_k=\zeta'.$
By Proposition~\ref{prop_trans_B},
there exists an element $h\in \mathcal{B}_{g,1}$ such that
$T_\beta T_{\beta'}^{-1}=hT_\zeta T_{\zeta'}^{-1}h^{-1}.$
Therefore,
\begin{align*}
T_\beta T_{\beta'}^{-1}=hT_\zeta T_{\zeta'}^{-1}h^{-1}= &
(hT_{\zeta_0} T_{\zeta_1}^{-1}h^{-1})(hT_{\zeta_1} T_{\zeta_2}^{-1}h^{-1})\cdots
(hT_{\zeta_{k-1}} T_{\zeta_k}^{-1}h^{-1})= \\
= &
(T_{h(\zeta_0)} T_{h(\zeta_1)}^{-1})(T_{h(\zeta_1)} T_{h(\zeta_2)}^{-1})\cdots
(T_{h(\zeta_{k-1})} T_{h(\zeta_k)}^{-1}).
\end{align*}
Since $T_{\zeta_i}T_{\zeta_{i+1}}^{-1}$ is a CBP-twists of genus $1$ for $i=0,\ldots k-1,$ and $h\in \mathcal{B}_{g,1},$ the element
$T_{h(\zeta_i)} T_{h(\zeta_{i+1})}^{-1}$ is also a CBP-twists of genus $1$ for $i=0,\ldots k-1.$
\end{proof}

\begin{remark}
A posteriori we found that Proposition \ref{prop_CBP_1} was obtained independently by Omori in \cite{omori}.
\end{remark}

\section{Invariants of Integral Homology $3$-Spheres}
Throughout this section we set $A$ an abelian group and $A_2$ the subgroup of $2$-torsion elements of $A.$
Consider an $A$-valued invariant of homology $3$-spheres $F: \mathcal{S}^3 \rightarrow A.$
By \cite[Theorem 1]{pitsch} there is a bijection
\begin{equation}
\label{bij_S3}
\lim_{g\to \infty}(\mathcal{TA}_{g,1}\backslash\mathcal{T}_{g,1}/\mathcal{TB}_{g,1})_{\mathcal{AB}_{g,1}} \xrightarrow{\sim}  \mathcal{S}^3.
\end{equation}
Precomposing an invariant $F$ with the canonical maps $\mathcal{T}_{g,1}\rightarrow \lim_{g\rightarrow \infty} \mathcal{T}_{g,1}/\sim \rightarrow  \mathcal{S}^3$ we get a family of maps $\{F_g\}_g$ with $F_g:\mathcal{T}_{g,1}\rightarrow A$ satisfying the following properties:
\begin{enumerate}[i)]
\item $F_{g+1}(x)=F_g(x) \quad \text{for every }x\in \mathcal{T}_{g,1},$
\item $F_g(\xi_a x\xi_b)=F_g(x) \quad \text{for every } x\in \mathcal{T}_{g,1},\;\xi_a\in \mathcal{TA}_{g,1},\;\xi_b\in \mathcal{TB}_{g,1},$
\item $F_g(\phi x \phi^{-1})=F_g(x)  \quad \text{for every }  x\in \mathcal{T}_{g,1}, \; \phi\in \mathcal{AB}_{g,1}.$
\end{enumerate}
Because of property i), without loss of generality we can assume $g \geq 4,$ this avoids
having to deal with some peculiarities in the homology of low genus mapping class groups.

Using this framework we prove that modulo a multiplicative constant $x\in A_2$ there is only one family of homomorphisms $\{F_g\}_g$ satisfying the aforementioned properties and that this family reassembles to the Rohlin invariant.
For such purpose we have to understand the following three groups:
$$H_1(\mathcal{T}_{g,1};\mathbb{Z})_{\mathcal{AB}_{g,1}}, \qquad H_1(\mathcal{TA}_{g,1};\mathbb{Z})_{\mathcal{AB}_{g,1}}, \qquad H_1(\mathcal{TB}_{g,1};\mathbb{Z})_{\mathcal{AB}_{g,1}}.$$

\begin{proposition}
\label{prop-Torelli-GL}
For a given integer $g\geq 4,$ the Birman Craggs Johnson homomorphism $\sigma: \mathcal{T}_{g,1}\rightarrow \mathfrak{B}_3$
composed with the projection $\mathfrak{B}_3\rightarrow\mathfrak{B}_0$ induces an isomorphism $$H_1(\mathcal{T}_{g,1};\mathbb{Z})_{\mathcal{AB}_{g,1}}\cong \mathbb{Z}/2.$$
\end{proposition}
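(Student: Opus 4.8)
The strategy is to exploit the $\mathcal{M}_{g,1}$-equivariance of the BCJ-homomorphism $\sigma$ together with the known structure of $H_1(\mathcal{T}_{g,1};\mathbb{Z})$. By Johnson's work, the abelianization $H_1(\mathcal{T}_{g,1};\mathbb{Z})$ fits into an exact sequence whose pieces are controlled by the BCJ-homomorphism (the $2$-torsion part) and by the Johnson homomorphism $\tau$ onto $\wedge^3 H/H$ (the free part); more precisely, for $g\geq 3$ one has $H_1(\mathcal{T}_{g,1};\mathbb{Z})\otimes \mathbb{Z}/2\cong \mathfrak{B}_3$ via $\sigma$, and the torsion-free quotient is $\wedge^3 H$. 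Since taking $\mathcal{AB}_{g,1}$-coinvariants is right exact, I would compute the coinvariants of each graded piece and assemble. Because $\mathcal{AB}_{g,1}$ acts through $GL_g(\mathbb{Z})$ on $H = A\oplus B$ (Lemma~3 of \cite{pitsch}, recalled in the excerpt), and $\sigma$ is equivariant with this action factoring through $Sp_{2g}(\mathbb{Z})$ and then $GL_g(\mathbb{Z})$ on $\mathfrak{B}_3$, the relevant computation for the $2$-torsion part is exactly Lemma~\ref{lem_B_3-inv}: $(\mathfrak{B}_3)_{GL_g(\mathbb{Z})}\cong \langle \overline{1}\rangle \cong \mathbb{Z}/2$ for $g\geq 4$.

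Concretely, the steps are: (1) recall Johnson's exact sequence presenting $H_1(\mathcal{T}_{g,1};\mathbb{Z})$, so that $\sigma$ and $\tau$ together detect everything; (2) take $\mathcal{AB}_{g,1}$-coinvariants of this sequence, using right-exactness of the coinvariants functor; (3) show the free part $(\wedge^3 H)_{GL_g(\mathbb{Z})}$ vanishes — here I would use that $GL_g(\mathbb{Z})$ acts on $\wedge^3(A\oplus B)$ and, by a computation in the spirit of Lemma~\ref{lem_B_3-inv} (applying elementary matrices $Id_g+E_{ij}$ to basis wedges $a_i\wedge a_j\wedge a_k$, $a_i\wedge a_j\wedge b_k$, etc., and using $g\geq 4$ to have enough free indices), conclude that every basis element is killed in the coinvariants; (4) apply Lemma~\ref{lem_B_3-inv} to identify the torsion contribution as $\mathbb{Z}/2$ generated by the image of $\overline{1}$; (5) conclude $H_1(\mathcal{T}_{g,1};\mathbb{Z})_{\mathcal{AB}_{g,1}}\cong \mathbb{Z}/2$, and track that the projection $\mathfrak{B}_3 \to \mathfrak{B}_0 = \langle \overline 1\rangle$ realizes this isomorphism since it is precisely the map to coinvariants composed with the identification of Lemma~\ref{lem_B_3-inv}.

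The main obstacle is step (3): handling the free part carefully. One must be sure that the full $GL_g(\mathbb{Z})$-module structure on $\wedge^3 H$ — not just the $\mathfrak{S}_g$-permutation action — is used, since only the elementary matrices $Id_g+E_{ij}$ provide the relations $a_i\wedge a_j\wedge a_k \sim a_i\wedge a_j\wedge a_k + (\text{terms with index }i\text{ replaced by }j)$ that force vanishing; and one must check that the mixed basis wedges involving both $a$'s and $b$'s (where the transpose-inverse ${}^tG^{-1}$ acts on the $b$-side) also die, which requires combining two or three such relations as in the degree-$3$ part of Lemma~\ref{lem_B_3-inv}. A secondary subtlety is bookkeeping the extension between torsion and free parts: a priori the coinvariants sequence only gives that $H_1(\mathcal{T}_{g,1})_{\mathcal{AB}_{g,1}}$ is a quotient of $(\mathfrak{B}_3)_{GL_g(\mathbb{Z})}\cong\mathbb{Z}/2$ glued to $(\wedge^3 H)_{GL_g(\mathbb{Z})} = 0$, so one needs the nontriviality direction — that $\overline 1$ survives — which follows because the Rohlin invariant (equivalently, the composite $\varepsilon\circ\pi_g\circ\sigma$) gives a nonzero $\mathcal{AB}_{g,1}$-invariant homomorphism $\mathcal{T}_{g,1}\to\mathbb{Z}/2$, detecting the generator.
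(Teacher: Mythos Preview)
Your overall strategy matches the paper's: use Johnson's extension
\[
0 \to \mathfrak{B}_2 \to H_1(\mathcal{T}_{g,1};\mathbb{Z}) \to \Lambda^3 H \to 0,
\]
take $\mathcal{AB}_{g,1}$-coinvariants (which factor through $GL_g(\mathbb{Z})$), and kill the right-hand term. But there is a genuine gap in your bookkeeping of the torsion side. The torsion subgroup in Johnson's sequence is $\mathfrak{B}_2$, not $\mathfrak{B}_3$, and Lemma~\ref{lem_B_3-inv} gives $(\mathfrak{B}_2)_{GL_g(\mathbb{Z})}\cong(\mathbb{Z}/2)^2$, generated by $\overline{1}$ and $\overline{a}_1\overline{b}_1$. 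So the right-exact coinvariant sequence only tells you that $H_1(\mathcal{T}_{g,1};\mathbb{Z})_{\mathcal{AB}_{g,1}}$ is a quotient of $(\mathbb{Z}/2)^2$, not of $\mathbb{Z}/2$ as you claim. Your lower-bound argument via $\pi_g\circ\sigma$ then leaves you stuck between $\mathbb{Z}/2$ and $(\mathbb{Z}/2)^2$.

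The paper closes this gap with one extra observation you are missing: once you know $H_1(\mathcal{T}_{g,1};\mathbb{Z})_{GL_g(\mathbb{Z})}$ is $2$-torsion (being a quotient of $(\mathfrak{B}_2)_{GL_g}$), you may replace it by $\bigl(H_1(\mathcal{T}_{g,1};\mathbb{Z})\otimes\mathbb{Z}/2\bigr)_{GL_g(\mathbb{Z})}$, and \emph{that} is $(\mathfrak{B}_3)_{GL_g(\mathbb{Z})}\cong\mathbb{Z}/2$ by Johnson and Lemma~\ref{lem_B_3-inv}. This simultaneously pins down the size and shows that $\sigma$ followed by the projection to $\mathfrak{B}_0$ realizes the isomorphism, so no separate nontriviality check is needed. (Incidentally, invoking the Rohlin invariant for nontriviality would be circular here, since the identification of $\pi_g\circ\sigma$ with the Rohlin invariant in Theorem~\ref{teo-roh-intro} relies on this proposition.) For step (3) the paper also takes a shortcut you might enjoy: since $-Id\in GL_g(\mathbb{Z})$ acts by $-1$ on $\Lambda^3 H$, the coinvariants are $2$-torsion, hence $(\Lambda^3 H)_{GL_g}\cong(\Lambda^3 H_2)_{GL_g}\cong(\mathfrak{B}_3/\mathfrak{B}_2)_{GL_g}=0$ directly from Lemma~\ref{lem_B_3-inv}, avoiding a fresh elementary-matrix computation.
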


\begin{proof}
By the fundamental result of Johnson~\cite{jon_3}, we have an extension:
	\[
	\xymatrix{
0 \ar[r] & \mathfrak{B}_2   \ar[r] & H_1(\mathcal{T}_{g,1};\mathbb{Z}) \ar[r] & \Lambda^3 H \ar[r] & 0.	
}
	\]
	
	All the maps appeared in the short exact sequence above are $\mathcal{M}_{g,1}$-equivariant, where the action of $\mathcal{M}_{g,1}$ on the above three groups is through the symplectic action on $H$. In particular, taking $\mathcal{AB}_{g,1}$-coinvariants we get an exact sequence:
	\begin{equation}
	\label{ses_BC}
	\xymatrix{
		 (\mathfrak{B}_2)_{GL_g(\mathbb{Z})} \ar[r] & H_1(\mathcal{T}_{g,1};\mathbb{Z})_{GL_g(\mathbb{Z})} \ar[r] & (\Lambda^3 H)_{GL_g(\mathbb{Z})} \ar[r] & 0.
		}	
	\end{equation}

	First, observe that $-Id \in GL_g(\mathbb{Z})$ acts by multiplication by $-1$ on $H$ and hence on $\Lambda^3 H$. Therefore, for any $w \in (\Lambda^3 H)_{GL_g(\mathbb{Z})}$, $-w = w$ and hence there are isomorphisms
	$$(\Lambda^3H) _{GL_g(\mathbb{Z})}\simeq (\Lambda^3H_2) _{GL_g(\mathbb{Z})}\simeq (\mathfrak{B}_3/\mathfrak{B}_2)_{GL_g(\mathbb{Z})}.$$
	By Lemma \ref{lem_B_3-inv} this last group is zero, and by the exact sequence \eqref{ses_BC} the $\mathbb{Z}/2$-vector space $(\mathfrak{B}_2)_{GL_g(\mathbb{Z})}$ surjects onto $H_1(\mathcal{T}_{g,1};\mathbb{Z})_{GL_g(\mathbb{Z})}$ getting that all elements of this last group have $2$-torsion.
Therefore by \cite[Theorem 1]{jon_3} the BCJ homomorphism $\sigma: \mathcal{T}_{g,1}\rightarrow \mathfrak{B}_3$ induces an isomorphism $H_1(\mathcal{T}_{g,1};\mathbb{Z})_{GL_g(\mathbb{Z})}\cong (\mathfrak{B}_3)_{GL_g(\mathbb{Z})}$ and we conclude by Lemma \ref{lem_B_3-inv}.
\end{proof}

\begin{lemma}
\label{lem_TB,TA}
Provided $g\geq 4,$ the groups 
$H_1 (\mathcal{TB}_{g,1};\mathbb{Z})_{\mathcal{AB}_{g,1}}$ and $H_1 (\mathcal{TA}_{g,1};\mathbb{Z})_{\mathcal{AB}_{g,1}}$
are zero.
\end{lemma}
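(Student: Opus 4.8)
The plan is to prove the statement for $\mathcal{TB}_{g,1}$ and to deduce it for $\mathcal{TA}_{g,1}$ from the symmetry exchanging the two handlebodies: there is a diffeomorphism of $\Sigma_{g,1}$ whose conjugation action carries $\mathcal{TA}_{g,1}$ isomorphically onto $\mathcal{TB}_{g,1}$ and normalizes $\mathcal{AB}_{g,1}$, hence induces an isomorphism $H_1(\mathcal{TA}_{g,1};\mathbb{Z})_{\mathcal{AB}_{g,1}}\cong H_1(\mathcal{TB}_{g,1};\mathbb{Z})_{\mathcal{AB}_{g,1}}$. For $\mathcal{TB}_{g,1}$ I would work with the short exact sequence
\[
1\longrightarrow \mathcal{LTB}_{g,1}\longrightarrow \mathcal{TB}_{g,1}\xrightarrow{\ \rho\ } IA_g\longrightarrow 1,
\]
where $IA_g=\ker\bigl(\mathrm{Aut}(\pi_1\mathcal{H}_g)\to GL_g(\mathbb{Z})\bigr)$ and $\rho$ is induced by $\mathcal{B}_{g,1}\twoheadrightarrow \mathrm{Aut}(\pi_1\mathcal{H}_g)$. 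Its kernel is $\mathcal{TB}_{g,1}\cap\mathcal{L}_{g,1}=\mathcal{LTB}_{g,1}$; its image lies in $IA_g$ because $\mathcal{TB}_{g,1}$ acts trivially on $H_1(\Sigma_{g,1};\mathbb{Z})$, hence on $H_1(\mathcal{H}_g;\mathbb{Z})$; and it is onto because any $\psi\in IA_g$ is realized by some $\phi_0\in\mathcal{B}_{g,1}$, for which $\Psi(\phi_0)\in S_g(\mathbb{Z})$ (as $\psi\in IA_g$), so Proposition~\ref{prop_surj_Luft} supplies $\ell\in\mathcal{L}_{g,1}$ with $\Psi(\ell)=\Psi(\phi_0)$ and then $\ell^{-1}\phi_0\in\mathcal{TB}_{g,1}$ still induces $\psi$. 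All three groups are normal in $\mathcal{B}_{g,1}$, so $\mathcal{AB}_{g,1}$ acts by conjugation on the whole five-term exact sequence of this extension; applying $(-)_{\mathcal{AB}_{g,1}}$ (right exactness) and using that the action on $H_1(IA_g;\mathbb{Z})$ factors through $\Psi(\mathcal{AB}_{g,1})\cong GL_g(\mathbb{Z})$, I obtain an exact sequence
\[
H_1(\mathcal{LTB}_{g,1};\mathbb{Z})_{\mathcal{TB}_{g,1}\mathcal{AB}_{g,1}}\longrightarrow H_1(\mathcal{TB}_{g,1};\mathbb{Z})_{\mathcal{AB}_{g,1}}\longrightarrow H_1(IA_g;\mathbb{Z})_{GL_g(\mathbb{Z})}\longrightarrow 0.
\]
It then suffices to show both outer groups vanish.

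For the rightmost term I would invoke the classical computation $H_1(IA_g;\mathbb{Z})\cong\mathrm{Hom}(H,\Lambda^2 H)$ of $GL_g(\mathbb{Z})$-modules, with $H=H_1(\mathcal{H}_g;\mathbb{Z})\cong\mathbb{Z}^g$. Since $-\mathrm{Id}$ acts by $-1$, the coinvariants are $2$-torsion, hence equal the $GL_g(\mathbb{Z})$-coinvariants of $\mathrm{Hom}(H_2,\Lambda^2 H_2)$; these vanish for $g\ge 4$ by a short computation in the spirit of Lemma~\ref{lem_B_3-inv} (equivalently: there is no nonzero $GL_g(\mathbb{F}_2)$-equivariant map $\Lambda^2 H_2\to H_2$ once $g\ge 4$).

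For the leftmost term, $\mathcal{LTB}_{g,1}$ is generated by CBP-twists and, by Proposition~\ref{prop_CBP_1}, even by CBP-twists of genus $1$; hence the image of $H_1(\mathcal{LTB}_{g,1};\mathbb{Z})$ in $H_1(\mathcal{TB}_{g,1};\mathbb{Z})_{\mathcal{AB}_{g,1}}$ is generated by classes of genus-$1$ CBP-twists, which by Proposition~\ref{prop_trans_B} are all $\mathcal{B}_{g,1}$-conjugate. Writing $\mathcal{B}_{g,1}=\mathcal{TB}_{g,1}\mathcal{AB}_{g,1}\mathcal{L}_{g,1}$ (Proposition~\ref{prop_prod_B}) and using that $\mathcal{TB}_{g,1}$ acts trivially on its own abelianization, these classes coincide in $H_1(\mathcal{TB}_{g,1};\mathbb{Z})_{\mathcal{AB}_{g,1}}$ up to the conjugation action of $\mathcal{L}_{g,1}$, and I would then manufacture relations in $\mathcal{TB}_{g,1}$ to kill this common class: for instance, splitting a genus-$2$ CBP-twist $T_\zeta T_{\zeta'}^{-1}=\tau_1\tau_2$ into two genus-$1$ CBP-twists as in Proposition~\ref{prop_CBP_1} and conjugating by an element of $\mathcal{AB}_{g,1}$ that exchanges $\zeta\leftrightarrow\zeta'$ together with the two genus-$1$ halves gives $[\tau_1]+[\tau_2]=0$, while the $\beta\leftrightarrow\beta'$ symmetry of a single genus-$1$ CBP-twist shows its class is $2$-torsion. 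Turning this into the actual vanishing of the class — that is, controlling the Luft-group conjugation on $H_1(\mathcal{LTB}_{g,1};\mathbb{Z})$ and producing the remaining relation from the geometry of CBP-twists developed in Section~\ref{section_luft} — is the step I expect to be the main obstacle; the identification of the quotient with $IA_g$, the five-term sequence, and the $GL_g$-coinvariant computation for $IA_g$ are all formal once Lemma~\ref{lem_B_3-inv} and Propositions~\ref{prop_surj_Luft}, \ref{prop_prod_B}, \ref{prop_trans_B}, \ref{prop_CBP_1} are in hand.
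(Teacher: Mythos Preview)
Your overall architecture is exactly that of the paper: the short exact sequence $1\to\mathcal{LTB}_{g,1}\to\mathcal{TB}_{g,1}\to IA\to 1$, the five-term sequence, and the reduction to the vanishing of $H_1(IA;\mathbb{Z})_{\mathcal{AB}_{g,1}}$ and of $H_1(\mathcal{LTB}_{g,1};\mathbb{Z})_{\mathcal{TB}_{g,1}\cdot\mathcal{AB}_{g,1}}$. The paper also treats $\mathcal{TA}_{g,1}$ by symmetry, so that part is fine.

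For $IA$ you take a different route than the paper. The paper does not use the identification $H_1(IA_g;\mathbb{Z})\cong \mathrm{Hom}(H,\Lambda^2 H)$; it argues directly from Magnus' theorem that $IA$ is normally generated in $\mathrm{Aut}(F_g)$ by the single element $K_{12}$, and then produces one explicit relation $fK_{13}f^{-1}=K_{12}K_{13}$ in $\mathrm{Aut}(F_g)$ showing $[K_{12}]=0$ in the $\mathrm{Aut}(F_g)$-coinvariants (hence in the $\mathcal{AB}_{g,1}$-coinvariants, since $\mathcal{AB}_{g,1}\twoheadrightarrow\mathrm{Aut}(F_g)$). Your approach via $\mathrm{Hom}(H,\Lambda^2H)$ can be made to work, but your parenthetical ``equivalently: there is no nonzero $GL_g(\mathbb{F}_2)$-equivariant map $\Lambda^2 H_2\to H_2$'' is not correct as stated: over $\mathbb{F}_2$ coinvariants and invariants of a $GL_g(\mathbb{F}_2)$-module need not coincide, so vanishing of $\mathrm{Hom}_{GL_g(\mathbb{F}_2)}(\Lambda^2H_2,H_2)$ does not by itself give vanishing of $(H_2^*\otimes\Lambda^2H_2)_{GL_g(\mathbb{F}_2)}$. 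You would have to do the coinvariant computation honestly, in the style of Lemma~\ref{lem_B_3-inv}. The paper's Magnus argument is shorter and already works for $g\ge 3$.

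For $\mathcal{LTB}_{g,1}$ you correctly locate the difficulty, but the two relations you propose both only yield $2[T_\beta T_{\beta'}^{-1}]=0$; neither your genus-$2$ splitting-and-swapping argument nor the $\beta\leftrightarrow\beta'$ symmetry gives anything beyond $2$-torsion. The paper closes the gap in two steps. Step~1 is precisely the ``controlling the Luft-group conjugation'' that you flag: using Propositions~\ref{prop_prod_B} and~\ref{prop_trans_B} and an explicit set-theoretic section $s:S_g(\mathbb{Z})\to\mathcal{L}_{g,1}$, the paper reduces the $\mathcal{L}_{g,1}$-conjugation to conjugation by the commuting twists $T_{\gamma_{1j}}$ ($j\ge 3$) and then, via several explicit $\mathcal{AB}_{g,1}$-conjugations (including a half-twist exchanging holes and an element $f=T_{\alpha_3}T_{\eta_{34}}^{-1}T_{\beta_4}$), proves that \emph{all} genus-$1$ CBP-twists are equal in the coinvariants; this is substantially more delicate than what you sketch. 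Step~2, which you did not anticipate, is a \emph{lantern relation}: there are two lantern configurations on $\Sigma_{g,1}$ sharing the same separating curve $\gamma$, one expressing $T_\gamma$ as a product of three genus-$1$ CBP-twists and the other as a product of three genus-$2$ CBP-twists. Since genus-$2$ CBP-twists are zero (by Step~1 plus $2$-torsion), equating the two lanterns gives $[T_{\zeta'_1}T_{\zeta_1}^{-1}]=0$. The lantern relation is the missing ingredient that takes you from $2c=0$ to $c=0$.
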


\begin{proof}
We only give the proof for $\mathcal{TB}_{g,1}$ the other case is similar.

Denote by $IA$ the kernel of the natural map $Aut(\pi_1(\mathcal{H}_g))\rightarrow GL_g(\mathbb{Z})$.
Consider the following short exact sequence:
\begin{equation}
\label{seq_LTB}
\xymatrix@C=7mm@R=7mm{ 1 \ar@{->}[r] & \mathcal{LTB}_{g,1} \ar@{->}[r] & \mathcal{TB}_{g,1} \ar@{->}[r]  & IA \ar@{->}[r] & 1 .}
\end{equation}
Taking $\mathcal{AB}_{g,1}$-coinvariants on the associated 5-term exact sequence, we get another exact sequence
\begin{equation*}
\xymatrix@C=7mm@R=7mm{ H_1(\mathcal{LTB}_{g,1};\mathbb{Z})_{\mathcal{TB}_{g,1}\cdot \mathcal{AB}_{g,1}} \ar@{->}[r] & H_1(\mathcal{TB}_{g,1};\mathbb{Z})_{\mathcal{AB}_{g,1}} \ar@{->}[r]  & H_1(IA;\mathbb{Z})_{\mathcal{AB}_{g,1}} \ar@{->}[r] & 0 ,}
\end{equation*}
and we conclude by Lemma \ref{lema-IA-zero} and Lemma \ref{lema-LTB-zero}.
\end{proof}

\begin{lemma}
\label{lema-IA-zero}
For a given integer $g\geq 3,$ the group $(H_1(IA;\mathbb{Z}))_{\mathcal{AB}_{g,1}}$ is zero.
\end{lemma}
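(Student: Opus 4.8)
The plan is to identify $IA$, the kernel of $\mathrm{Aut}(\pi_1(\mathcal{H}_g)) \to GL_g(\mathbb{Z})$, concretely as a group on which $GL_g(\mathbb{Z})$ acts, and then show its abelianization has vanishing $GL_g(\mathbb{Z})$-coinvariants (noting that the $\mathcal{AB}_{g,1}$-action factors through $GL_g(\mathbb{Z})$). Since $\pi_1(\mathcal{H}_g) = F_g$ is free of rank $g$, we have $\mathrm{Aut}(\pi_1(\mathcal{H}_g)) = \mathrm{Aut}(F_g)$ and $IA = IA_g$ is the classical group of IA-automorphisms. By the theorem of Magnus (see also Bachmuth, Andreadakis), $H_1(IA_g;\mathbb{Z})$ is a free abelian group, and as a $GL_g(\mathbb{Z})$-module it is isomorphic to $\mathrm{Hom}(H_1(F_g), \Lambda^2 H_1(F_g)) \cong H^* \otimes \Lambda^2 H$, where $H = H_1(F_g;\mathbb{Z}) \cong \mathbb{Z}^g$ and $H^*$ is its dual. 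The generators are the Magnus generators $K_{ij}$ and $K_{ijk}$, corresponding under this isomorphism to the elementary tensors.

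The key computational step is then to show $(H^* \otimes \Lambda^2 H)_{GL_g(\mathbb{Z})} = 0$. I would do this in the same elementary style as the proof of Lemma~\ref{lem_B_3-inv}: first observe that $-\mathrm{Id} \in GL_g(\mathbb{Z})$ acts on $H^* \otimes \Lambda^2 H$ by $(-1)\cdot(-1)^2 = -1$, so the coinvariants are already killed by $2$ and it suffices to work over $\mathbb{Z}/2$; then hit the standard basis elements $a_i^* \otimes (a_j \wedge a_k)$ (and the mixed variants) with transvections $\mathrm{Id} + E_{pq}$ to slide indices around and produce relations forcing every basis element into a single $GL_g(\mathbb{Z})$-orbit, and finally produce one more transvection relation that kills that last surviving class. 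A clean way to package this: the representation $H^* \otimes \Lambda^2 H$ over $\mathbb{Q}$ decomposes into irreducible $GL_g$-pieces none of which is trivial (for $g \geq 3$ the partition bookkeeping gives the pieces corresponding to $\Lambda^3$-type and a mixed hook, plus a copy of $H$ itself coming from the contraction $H^* \otimes \Lambda^2 H \to H$), and none of these is the trivial representation; combined with the $2$-torsion observation this forces the coinvariants to vanish. The contraction map $H^* \otimes \Lambda^2 H \to H$, $f \otimes (u \wedge v) \mapsto f(u)v - f(v)u$, is the thing to watch: its image $(H)_{GL_g(\mathbb{Z})}$ is zero again by the $-\mathrm{Id}$ argument (or directly by a transvection), and its kernel's coinvariants are handled by the explicit transvection computations.

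The hard part will be pinning down the $GL_g(\mathbb{Z})$-module structure of $H_1(IA_g;\mathbb{Z})$ precisely and citing it correctly — one must be careful about which side the dual sits on and about the low-genus hypothesis ($g \geq 3$ is exactly where the relevant irreducibles are genuinely nontrivial and the Magnus presentation is clean). Once that identification is in hand, the coinvariant vanishing is a routine transvection argument entirely parallel to Lemma~\ref{lem_B_3-inv}, so I would state the module identification with a precise reference (Magnus, or Formanek, or Day--Putman for the modern treatment) and then carry out the short bottom-up computation. I would also remark that since $H_1(IA_g;\mathbb{Z})$ is torsion-free, there is no subtlety about reducing mod $2$ other than the $-\mathrm{Id}$ trick, which makes the argument self-contained given the cited structure theorem.
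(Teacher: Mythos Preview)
Your approach is correct in outline, but it differs substantially from the paper's and imports more machinery than is needed. One caveat first: the sentence ``over $\mathbb{Q}$ none of the irreducible pieces is trivial, combined with the $2$-torsion observation this forces the coinvariants to vanish'' is not a valid inference---vanishing of rational coinvariants together with $2$-torsion still allows nonzero $\mathbb{Z}/2$-coinvariants. What actually proves the lemma in your scheme is the explicit transvection computation on $H^*\otimes\Lambda^2 H$ over $\mathbb{Z}/2$, which does go through for $g\geq 3$ (two $\mathfrak{S}_g$-orbit representatives, each killed by a single $I+E_{pq}$). So treat the rational decomposition as heuristic, not as the argument.

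The paper's proof is shorter and uses a weaker input. Rather than invoking the full $GL_g(\mathbb{Z})$-module structure $H_1(IA_g;\mathbb{Z})\cong H^*\otimes\Lambda^2 H$, it uses only Magnus's theorem that $IA$ is \emph{normally} generated inside $\mathrm{Aut}(F_g)$ by the single automorphism $K_{12}$, together with the fact (cited from \cite{pitsch}) that $\mathcal{AB}_{g,1}$ surjects onto $\mathrm{Aut}(F_g)$, so that $(H_1(IA))_{\mathcal{AB}_{g,1}}=(H_1(IA))_{\mathrm{Aut}(F_g)}$ is generated by the class of $K_{12}$. Then one explicit conjugation, by $f\in\mathrm{Aut}(F_g)$ with $f(\alpha_3)=\alpha_3\alpha_2$, gives $fK_{13}f^{-1}=K_{12}K_{13}$ and hence $[K_{12}]=0$ in the coinvariants. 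Your route computes the whole coinvariant module; the paper's route observes it is cyclic and kills the generator. Both reach the same conclusion, but the paper avoids citing the abelianization theorem and the associated transvection bookkeeping.
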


\begin{proof}
By \cite[Corollary 2.1]{luft} the action of $\mathcal{B}_{g,1}$ on the fundamental group of the inner handlebody $\mathcal{H}_g$ induces a surjective map $\mathcal{B}_{g,1} \twoheadrightarrow Aut (\pi_1(\mathcal{H}_g)).$ Indeed, the restriction of this map to $\mathcal{AB}_{g,1}$ also gives
a surjective map $\mathcal{AB}_{g,1} \twoheadrightarrow Aut (\pi_1(\mathcal{H}_g))$ (cf. the paragraph after Lemma 2 in \cite{pitsch}). Therefore we have an isomorphism
$$(H_1(IA;\mathbb{Z}))_{\mathcal{AB}_{g,1}}\cong (H_1(IA;\mathbb{Z}))_{Aut (\pi_1(\mathcal{H}_g))}.$$
According to Magnus \cite{magnus2}, for the given generators $\alpha_1, \cdots ,\alpha_g$ of $\pi_1(\mathcal{H}_g),$ the group $IA$ is normally generated as a subgroup of $Aut (\pi_1(\mathcal{H}_g))$ by the automorphism $K_{12}$ given by $K_{12}(\alpha_1)=\alpha_2\alpha_1\alpha_2^{-1}$ and $K_{12}(\alpha_i)=\alpha_i$ for $i\geq 2$. Then it is enough to show that $K_{12}$ is equivalent to zero.

Consider  $f\in Aut(\pi_1(\mathcal{H}_g))$ given by
$f(\alpha_3)=\alpha_3\alpha_2$ and $f(\alpha_i)=\alpha_i$ for $i\neq 3,$
with inverse $f^{-1}\in Aut(\pi_1(\mathcal{H}_g))$ given by
$f^{-1}(\alpha_3)=\alpha_3\alpha_2^{-1}$ and $f^{-1}(\alpha_i)=\alpha_i$ for $i\neq 3,$
and take the element $K_{13}\in IA$ given by
$K_{13}(\alpha_1)=\alpha_3\alpha_1\alpha_3^{-1}$ and $K_{13}(\alpha_i)=\alpha_i$ for $i\geq 2.$
Observe that
$$fK_{13}f^{-1}(\alpha_1)=\alpha_3\alpha_2\alpha_1\alpha_2^{-1}\alpha_3^{-1}\quad \text{and} \quad fK_{13}f^{-1}(\alpha_i)=\alpha_i\quad \text{for }i\geq 2.$$
Consequently, $fK_{13}f^{-1}=K_{12}K_{13}$
and the following equation holds
$$K_{13}=fK_{13}f^{-1}=K_{12}K_{13}=K_{12}+K_{13}.$$
Therefore $K_{12}$ is equivalent to zero.
\end{proof}

\begin{lemma}
\label{lema-LTB-zero}
For a given integer $g\geq 4,$ the group $(H_1(\mathcal{LTB}_{g,1};\mathbb{Z}))_{\mathcal{TB}_{g,1}\cdot \mathcal{AB}_{g,1}}$ is zero.
\end{lemma}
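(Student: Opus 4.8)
The plan is to use Pitsch's characterisation of $\mathcal{LTB}_{g,1}$ as the group generated by CBP-twists (recalled at the beginning of Section \ref{section_luft}) together with Propositions \ref{prop_surj_Luft}, \ref{prop_prod_B}, \ref{prop_trans_B} and \ref{prop_CBP_1}. First, since $\mathcal{LTB}_{g,1}$ is generated by CBP-twists, $H_1(\mathcal{LTB}_{g,1};\mathbb{Z})$ is generated, as an abelian group, by the classes $[T_\beta T_{\beta'}^{-1}]$ of CBP-twists; and because $H_1$ is abelian, Proposition \ref{prop_CBP_1} shows that the class of a CBP-twist of genus $k$ is the sum of $k$ classes of genus-$1$ CBP-twists, so $H_1(\mathcal{LTB}_{g,1};\mathbb{Z})$ is already generated by the classes of genus-$1$ CBP-twists. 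The group $\mathcal{LTB}_{g,1}=\mathcal{L}_{g,1}\cap\mathcal{TB}_{g,1}$ is the intersection of two normal subgroups of $\mathcal{B}_{g,1}$, hence is itself normal in $\mathcal{B}_{g,1}$, so $\mathcal{B}_{g,1}$ acts on $H_1(\mathcal{LTB}_{g,1};\mathbb{Z})$ by conjugation; by Proposition \ref{prop_trans_B} this action is transitive on the set of classes of genus-$1$ CBP-twists. Thus $H_1(\mathcal{LTB}_{g,1};\mathbb{Z})$ is a cyclic $\mathbb{Z}[\mathcal{B}_{g,1}]$-module generated by the class $[c_0]$ of any fixed genus-$1$ CBP-twist $c_0$.

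Next I would pass to the coinvariant group $M:=H_1(\mathcal{LTB}_{g,1};\mathbb{Z})_{\mathcal{TB}_{g,1}\cdot\mathcal{AB}_{g,1}}$. By Proposition \ref{prop_prod_B} one has $\mathcal{B}_{g,1}=\mathcal{TB}_{g,1}\cdot\mathcal{AB}_{g,1}\cdot\mathcal{L}_{g,1}$, and since $\mathcal{TB}_{g,1}$ and $\mathcal{AB}_{g,1}$ act trivially on $M$, the group $M$ is generated as an abelian group by the images of the classes $[l\,c_0\,l^{-1}]$ with $l\in\mathcal{L}_{g,1}$. I would then show these all coincide with the image of $[c_0]$: it is enough to verify that, for a conveniently chosen genus-$1$ CBP-twist $c_0$ and for each of the generators of $\mathcal{L}_{g,1}$ exhibited in the proof of Proposition \ref{prop_surj_Luft} — the twists $T_{\beta_k}$ and the elements $T_{\beta_i}^{-1}T_{\gamma_{ij}}T_{\beta_j}^{-1}$ — the conjugate $l\,c_0\,l^{-1}$ is a genus-$1$ CBP-twist that can be carried back to $c_0$ by an element of $\mathcal{TB}_{g,1}\cdot\mathcal{AB}_{g,1}$ (for instance by choosing the curves defining $c_0$ disjoint from $\{\beta_1,\dots,\beta_g\}$, so that $c_0$ commutes with all the $T_{\beta_k}$, and handling the remaining generators by hand). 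After this step $M$ is the cyclic group generated by the single class $[c_0]$. Since $c_0$ and $c_0^{-1}$ are both genus-$1$ CBP-twists, Proposition \ref{prop_trans_B} makes them $\mathcal{B}_{g,1}$-conjugate, so $[c_0]=-[c_0]$ in $M$ and hence $M$ is cyclic of order dividing $2$.

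It remains to show $[c_0]=0$. For this I would exhibit a relation among CBP-twists, $c_1\cdots c_n=1$, supported on a subsurface all of whose relevant curves bound properly embedded disks in $\mathcal{H}_g$ and modelled on the lantern relation, whose total genus $\sum_i\mathrm{genus}(c_i)$ is odd; translating through the identifications above (in $M$ every genus-$k$ CBP-class equals $k[c_0]$) this relation reads $\bigl(\sum_i\mathrm{genus}(c_i)\bigr)[c_0]=0$ in $M$, and combined with $2[c_0]=0$ it forces $[c_0]=0$, i.e. $M=0$. The main obstacle is precisely this last relation: a purely telescoping identity among pairwise disjoint CBP-twists, $\bigl(\prod_{i=1}^{n}T_{\delta_{i-1}}T_{\delta_i}^{-1}\bigr)\bigl(T_{\delta_0}T_{\delta_n}^{-1}\bigr)^{-1}=1$, always has even total genus, because genera add along a nested chain of curves; so one genuinely needs a more delicate, lantern-type relation, arranged so that all participating curves are disk-bounding in $\mathcal{H}_g$ while the genus count comes out odd. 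Producing and verifying that relation, together with the curve bookkeeping that makes the $\mathcal{L}_{g,1}$-conjugation harmless in the previous step, is where the real work lies; the generation and transitivity reductions are formal consequences of Section \ref{section_luft}.
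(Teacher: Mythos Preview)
Your overall architecture is exactly that of the paper: reduce to genus-$1$ CBP-twists via Proposition~\ref{prop_CBP_1}, use Propositions~\ref{prop_trans_B} and~\ref{prop_prod_B} to reduce to $\mathcal{L}_{g,1}$-conjugates of a fixed $c_0$, deduce $2$-torsion from $c_0\sim c_0^{-1}$, and finish with a lantern-type identity. The lantern step you sketch is essentially the paper's Step~2 (the paper uses two lanterns sharing a boundary curve $\gamma$, one with three genus-$1$ CBP-twists and one with three genus-$2$ CBP-twists, to obtain $[c_0]=2[c_0]+0=0$). So the final step is not the main obstacle.

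The genuine gap is the sentence ``it is enough to verify \dots\ for each of the generators of $\mathcal{L}_{g,1}$''. This reduction is not valid. The subgroup $G=\langle \mathcal{TB}_{g,1},\mathcal{AB}_{g,1}\rangle$ equals $\Psi^{-1}(GL_g(\mathbb{Z}))$, which is \emph{not} normal in $\mathcal{B}_{g,1}$ (since $GL_g(\mathbb{Z})$ is a section, not a normal subgroup, of $GL_g(\mathbb{Z})\ltimes S_g(\mathbb{Z})$). Consequently the $\mathcal{L}_{g,1}$-action on $H_1(\mathcal{LTB}_{g,1})$ does \emph{not} descend to the coinvariant module $M$, and knowing $[l_i\,c_0\,l_i^{-1}]=[c_0]$ for generators $l_i$ does not give $[l_il_j\,c_0\,(l_il_j)^{-1}]=[c_0]$: the inductive step would require $[l_i\,c'\,l_i^{-1}]=[c']$ for $c'=l_j\,c_0\,l_j^{-1}$, which is a different genus-$1$ CBP-twist you have not controlled. (If you instead proved the statement for \emph{every} genus-$1$ CBP-twist $c$ and every generator $l_i$, induction would work; but then you cannot use any ``convenient'' position of $c$ relative to the generating curves, and the verification becomes the full problem again.)

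The paper's Step~1 is accordingly much more than ``handling the remaining generators by hand''. It passes to an arbitrary $l\in\mathcal{L}_{g,1}$ via the set-theoretic section $s:S_g(\mathbb{Z})\to\mathcal{L}_{g,1}$, exploits that almost all of the section's building blocks commute with $T_\beta T_{\beta'}^{-1}$ so that only a \emph{commuting} family $\{T_{\gamma_{1j}}\}_{j\ge3}$ survives, derives the explicit expansion
\[
[\,s(\Psi(l))\,c_0\,s(\Psi(l))^{-1}\,]=[c_0]+\sum_{i=3}^{g}x_i\bigl([c_0]-[\,T_{\gamma_{1i}}^{-1}\cdot c_0\,]\bigr)
\]
in $M$, and then kills each correction term by further conjugations by explicit elements of $\mathcal{AB}_{g,1}$ (half-twists $f_k$ and $T_{\alpha_3}T_{\eta_{34}}^{-1}T_{\beta_4}$). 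This is where the bulk of the work lies, and your plan underestimates it while overestimating the difficulty of the lantern step.
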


\begin{proof}
By Proposition \ref{prop_CBP_1}, the group
$\mathcal{LTB}_{g,1}$ is generated by CBP-twists of genus $1.$
Then it is enough to show that all CBP-twists of genus $1$ are equivalent to zero.
Next we divide the proof in two steps. In the first step we show that all CBP-twists of genus $1$ are equivalent, and in the second step we show that all CBP-twists of genus $1$ are equivalent to zero.

\;

\textbf{Step 1.} We show that all CBP-twists of genus $1$ are equivalent.
Consider the CBP-twist $T_\beta T^{-1}_{\beta'}\in \mathcal{LTB}_{g,1}$ of genus $1$ depicted in the following figure:
\begin{figure}[H]
\begin{center}
\begin{tikzpicture}[scale=.5]
\draw[very thick] (-7,-2) -- (3,-2);
\draw[very thick] (-7,2) -- (3,2);
\draw[very thick] (-7,2) arc [radius=2, start angle=90, end angle=270];
\draw[very thick] (-7,0) circle [radius=.4];
\draw[very thick] (-4.5,0) circle [radius=.4];
\draw[very thick] (-2,0) circle [radius=.4];
\draw[very thick] (1,0) circle [radius=.4];
\draw[thick, dotted] (-1,0) -- (0,0);

\draw[thick,dashed] (-4.5,0.4) to [out=70,in=-70] (-4.5,2);
\draw[thick] (-4.5,0.4) to [out=110,in=-110] (-4.5,2);

\draw[thick,dashed] (-4.5,-0.4) to [out=-70,in=70] (-4.5,-2);
\draw[thick] (-4.5,-0.4) to [out=-110,in=110] (-4.5,-2);

\node [above] at (-4.5,2) {$\beta$};
\node [below] at (-4.5,-2) {$\beta'$};

\draw[thick,pattern=north west lines] (3,-2) to [out=130,in=-130] (3,2) to [out=-50,in=50] (3,-2);
\node at (-7,0) {\tiny{1}};
\node at (-4.5,0) {\tiny{2}};
\node at (-2,0) {\tiny{3}};
\node at (1,0) {\tiny{g}};
\end{tikzpicture}
\end{center}
\caption{A contractible bounding pair of genus $1$ in $\Sigma_{g,1}$.}
\end{figure}
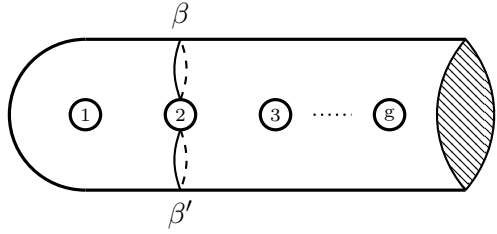

By Proposition \ref{prop_trans_B}, for every CBP-twist of genus $1,$ $T_\nu T_{\nu'}^{-1},$ on $ \Sigma_{g,1}$ there exists an element $h\in\mathcal{B}_{g,1}$ such that
$T_\nu T_{\nu'}^{-1}=hT_{\beta} T_{\beta'}^{-1}h^{-1},$
and by Proposition \ref{prop_prod_B}, there exist elements $l\in \mathcal{L}_{g,1},$ $f\in \mathcal{AB}_{g,1}$ and $\xi_b\in \mathcal{TB}_{g,1}$ such that
$h=\xi_b f l.$

Therefore in the coinvariant module we get that
\begin{align}
\label{eq_CBP_l}
T_\nu T_{\nu'}^{-1}=hT_{\beta} T_{\beta'}^{-1}h^{-1}=\xi_b f lT_{\beta} T_{\beta'}^{-1}l^{-1}f^{-1}\xi_b^{-1}= lT_{\beta} T_{\beta'}^{-1}l^{-1}.
\end{align}
Take the set-theoretic cross-section $s:\; S_g(\mathbb{Z})\rightarrow \mathcal{L}_{g,1}$ of $\Psi: \mathcal{L}_{g,1}\rightarrow S_g(\mathbb{Z})$, i.e. a function $s:\; S_g(\mathbb{Z})\rightarrow \mathcal{L}_{g,1}$ such that $\Psi\circ s=id$ not necessarily being an isomorphism, given by
$$
s(E_{ii})= T_{\beta_i}^{-1}, \qquad
s(SE_{ij})= \left\{\begin{array}{cc}
T_{\beta_1}^{-1}T_{\gamma_{1j}}T_{\beta_j}^{-1} & \text{for}\quad i=1 \\
T_{\beta_i}^{-1}T_{\gamma'_{ij}}T_{\beta_j}^{-1} & \text{for}\quad i\geq 2,
\end{array} \right. $$
where the curves $\beta_i,$ $\gamma_{ij},$ $\gamma'_{ij}$ are given in the following picture:
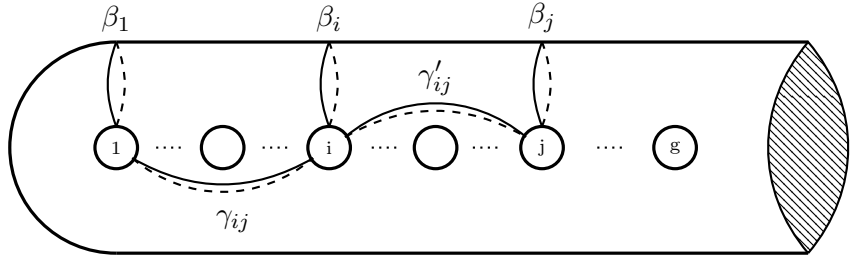
\begin{figure}[H]
\begin{center}
\begin{tikzpicture}[scale=.7]
\draw[very thick] (-6,-2) -- (7,-2);
\draw[very thick] (-6,2) -- (7,2);
\draw[very thick] (-6,2) arc [radius=2, start angle=90, end angle=270];
\draw[very thick] (-6,0) circle [radius=.4];
\draw[thick, dotted] (-5.25,0) -- (-4.75,0);
\draw[very thick] (-4,0) circle [radius=.4];
\draw[thick, dotted] (-3.25,0) -- (-2.75,0);
\draw[very thick] (-2,0) circle [radius=.4];
\draw[thick, dotted] (-1.2,0) -- (-0.7,0);
\draw[thick, dotted] (0.7,0) -- (1.2,0);
\draw[very thick] (2,0) circle [radius=.4];
\draw[thick, dotted] (3.5,0) -- (3,0);
\draw[very thick] (4.5,0) circle [radius=.4];
\draw[very thick] (0,0) circle [radius=.4];

\draw[thick] (-5.7,-0.2) to [out=-30,in=210] (-2.3,-0.2);
\draw[thick, dashed] (-5.7,-0.2) to [out=-40,in=220] (-2.3,-0.2);
\node [below] at (-3.8,-1) {$\gamma_{ij}$};

\draw[thick, dashed] (-1.7,0.2) to [out=30,in=-210] (1.7,0.2);
\draw[thick] (-1.7,0.2) to [out=40,in=-220] (1.7,0.2);
\node [above] at (0,0.8) {$\gamma'_{ij}$};

\draw[thick,dashed] (-6,0.4) to [out=70,in=-70] (-6,2);
\draw[thick] (-6,0.4) to [out=110,in=-110] (-6,2);

\draw[thick,dashed] (-2,0.4) to [out=70,in=-70] (-2,2);
\draw[thick] (-2,0.4) to [out=110,in=-110] (-2,2);

\draw[thick,dashed] (2,0.4) to [out=70,in=-70] (2,2);
\draw[thick] (2,0.4) to [out=110,in=-110] (2,2);

\node[above] at (-6,2) {$\beta_1$};
\node[above] at (-2,2) {$\beta_i$};
\node[above] at (2,2) {$\beta_j$};

\node at (-6,0) {\tiny{1}};
\node at (-2,0) {\tiny{i}};
\node at (2,0) {\tiny{j}};
\node at (4.5,0) {\tiny{g}};

\draw[thick,pattern=north west lines] (7,-2) to [out=130,in=-130] (7,2) to [out=-50,in=50] (7,-2);

\end{tikzpicture}
\end{center}
\caption{Curves involved in the set-theoretic cross-section $s$.}
\end{figure}

By the short exact sequence \eqref{ses_L}, given an element $l\in \mathcal{L}_{g,1},$ there exists an element $\xi_b\in \mathcal{LTB}_{g,1}$ such that $l=\xi_b s(\Psi(l)).$ Then, by Eq. \eqref{eq_CBP_l}, in the coinvariant module, we have that
\begin{equation}
\label{eq_CBP_s}
\begin{aligned}[b]
T_\nu T_{\nu'}^{-1}= \; &  lT_{\beta} T_{\beta'}^{-1}l^{-1}=\xi_b s(\Psi(l))T_{\beta} T_{\beta'}^{-1}s(\Psi(l))^{-1}\xi_b^{-1} \\
= \; & s(\Psi(l))T_{\beta} T_{\beta'}^{-1}s(\Psi(l))^{-1}.
\end{aligned}
\end{equation}
Now observe that $s(\Psi(l))$ is a product of the following elements:
$$\{T_{\gamma_{1i}}\mid \; i\geq 2\},\quad \{T_{\gamma'_{ij}}\mid \; 2\leq i<j \},\quad \{T_{\beta_i}\mid 1\leq i \leq g\}.$$
Since the curves
\begin{equation}
\label{died_curves}
\gamma_{12},\quad \{\gamma'_{ij}\mid \; 2\leq i<j \},\quad \{\beta_i\mid 1\leq i \leq g\},
\end{equation}
are disjoint with the curves
\begin{equation}
\label{survived_curves}
\beta,\quad \beta', \quad\{\gamma_{1j}\mid \; j\geq 3\},
\end{equation}
the geometric intersection number between a curve of the family \eqref{died_curves} and a curve of the family \eqref{survived_curves}
is zero. Therefore, the elements of the family of Dehn twists
\begin{equation}
\label{died_twists}
T_{\gamma_{12}},\quad \{T_{\gamma'_{ij}}\mid \; 2\leq i<j \},\quad \{T_{\beta_i}\mid 1\leq i \leq g\},
\end{equation}
commutes with the elements of the family of Dehn twists
\begin{equation}
\label{survived_twists}
T_{\beta},\quad T_{\beta'}, \quad\{T_{\gamma_{1j}}\mid \; j\geq 3\}.
\end{equation}
Furthermore, the elements of the family $\{T_{\gamma_{1j}}\mid \; j\geq 3\}$ commute between them because the curves of the family
$\{\gamma_{1j}\mid \; j\geq 3\}$ are pairwise disjoint.
Therefore,
\begin{equation}
\label{eq_CBP_expand}
s(\Psi(l))T_{\beta}  T_{\beta'}^{-1}s(\Psi(l))^{-1}
=(T_{\gamma_{13}}^{x_3}\cdots T_{\gamma_{1g}}^{x_g})T_{\beta}  T_{\beta'}^{-1}(T_{\gamma_{13}}^{x_3}\cdots T_{\gamma_{1g}}^{x_g})^{-1},
\end{equation}
for some $x_3, \ldots , x_g\in \mathbb{Z}.$
And as a consequence of the Eqs. \eqref{eq_CBP_s} and \eqref{eq_CBP_expand} we get the following equation:
\begin{equation}
\label{eq_CBP_product}
T_\nu T_{\nu'}^{-1}=(T_{\gamma_{13}}^{x_3}\cdots T_{\gamma_{1g}}^{x_g})T_{\beta} T_{\beta'}^{-1}(T_{\gamma_{13}}^{x_3}\cdots T_{\gamma_{1g}}^{x_g})^{-1}=T_{\beta} T_{(T_{\gamma_{13}}^{x_3}\cdots T_{\gamma_{1g}}^{x_g})(\beta')}^{-1}.
\end{equation}

Next, we prove that in the coinvariant module,
$$T_{\beta} T_{(T_{\gamma_{13}}^{x_3}\cdots T_{\gamma_{1g}}^{x_g})(\beta')}^{-1}= T_{\beta} T_{\beta'}^{-1}+\sum^g_{i=3}x_i\big(T_{\beta} T_{\beta'}^{-1}-T_{\beta} T_{T^{-1}_{\gamma_{1i}}(\beta')}^{-1}\big).$$
Consider the curves $\{\gamma_{1j}, \gamma'_{1j} \mid \; 3\leq j\geq g\}$ given in the following picture:

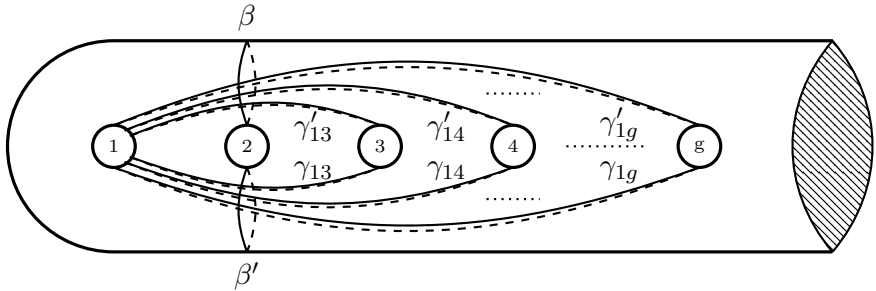
\begin{figure}[H]
\begin{center}
\begin{tikzpicture}[scale=.7]
\draw[very thick] (-2.5,-2) -- (11,-2);
\draw[very thick] (-2.5,2) -- (11,2);
\draw[very thick] (-2.5,2) arc [radius=2, start angle=90, end angle=270];

\draw[very thick] (0,0) circle [radius=.4];
\draw[very thick] (-2.5,0) circle [radius=.4];
\draw[very thick] (2.5,0) circle [radius=.4];
\draw[very thick] (5,0) circle [radius=.4];
\draw[very thick] (8.5,0) circle [radius=.4];

\draw[thick, dotted] (6,0) -- (7.5,0);
\draw[thick, dotted] (4.5,-1) -- (5.5,-1);
\draw[thick, dotted] (4.5,1) -- (5.5,1);

\draw[thick,dashed] (0,0.4) to [out=70,in=-70] (0,2);
\draw[thick] (0,0.4) to [out=110,in=-110] (0,2);

\draw[thick,dashed] (0,-0.4) to [out=-70,in=70] (0,-2);
\draw[thick] (0,-0.4) to [out=-110,in=110] (0,-2);

\draw[thick] (-2.2,-0.2) to [out=-20,in=200] (2.5,-0.4);
\draw[thick, dashed] (-2.2,-0.2) to [out=-22,in=202] (2.5,-0.4);

\draw[thick] (-2.3,-0.3) to [out=-20,in=200] (5,-0.4);
\draw[thick, dashed] (-2.3,-0.3) to [out=-22,in=202] (5,-0.4);

\draw[thick] (-2.5,-0.4) to [out=-20,in=200] (8.5,-0.4);
\draw[thick, dashed] (-2.5,-0.4) to [out=-22,in=202] (8.5,-0.4);

\draw[thick, dashed] (-2.2,0.2) to [out=20,in=-200] (2.5,0.4);
\draw[thick] (-2.2,0.2) to [out=22,in=-202] (2.5,0.4);

\draw[thick, dashed] (-2.3,0.3) to [out=20,in=-200] (5,0.4);
\draw[thick] (-2.3,0.3) to [out=22,in=-202] (5,0.4);

\draw[thick, dashed] (-2.5,0.4) to [out=20,in=-200] (8.5,0.4);
\draw[thick] (-2.5,0.4) to [out=22,in=-202] (8.5,0.4);

\node at (-2.5,0) {\tiny{1}};
\node at (0,0) {\tiny{2}};
\node at (2.5,0) {\tiny{3}};
\node at (5,0) {\tiny{4}};
\node at (8.5,0) {\tiny{g}};

\node[above] at (1.25,-0.1) {$\gamma'_{13}$};
\node[above] at (3.75,-0.1) {$\gamma'_{14}$};
\node[above] at (7,-0.1) {$\gamma'_{1g}$};
\node[below] at (1.25,-0.1) {$\gamma_{13}$};
\node[below] at (3.75,-0.1) {$\gamma_{14}$};
\node[below] at (7,-0.1) {$\gamma_{1g}$};
\node [above] at (0,2) {$\beta$};
\node [below] at (0,-2) {$\beta'$};

\draw[thick,pattern=north west lines] (11,-2) to [out=130,in=-130] (11,2) to [out=-50,in=50] (11,-2);
\end{tikzpicture}
\end{center}
\caption{Curves involved in the definition of elements of $\mathcal{AB}_{g,1}$.}
\end{figure}
Fix an integer $j$ with $3\leq j\leq g.$ Consider $\beta''_j=(T_{\gamma_{1(j+1)}}^{x_{(j+1)}}\cdots T_{\gamma_{1g}}^{x_g})(\beta')$ for $3\leq j\leq g-1$ and $\beta''_g=\beta'.$ For $k\geq 1$ we have that
$$
T_\beta T_{T^k_{\gamma_{1j}}(\beta''_j)}^{-1}=  T_\beta T_{\beta'}^{-1}+T_{\beta'}T_{T^{-1}_{\gamma_{1j}'}(\beta)}^{-1}+ T_{T^{-1}_{\gamma_{1j}'}(\beta)}T_{T^k_{\gamma_{1j}}(\beta''_j)}^{-1}.$$
Since $T_{\gamma'_{1j}}T_{\gamma_{1j}}^{-1}\in \mathcal{AB}_{g,1}$ for $3\leq j\leq g,$ conjugating by $T_{\gamma'_{1j}}T_{\gamma_{1j}}^{-1}$ the last two terms of the above equation, in the coinvariant module, we get that 
\begin{equation}
\label{eq_red_twists}
T_\beta T_{T^k_{\gamma_{1j}}(\beta''_j)}^{-1}= T_\beta T_{\beta'}^{-1}+T_{T^{-1}_{\gamma_{1j}}(\beta')}T_{\beta}^{-1}+ T_{\beta}T_{T^{k-1}_{\gamma_{1j}}(\beta''_j)}^{-1}.
\end{equation}
Applying Eq. \eqref{eq_red_twists} from $k=x_j$ to $k=1,$ we obtain that
\begin{equation}
\label{eq_dif_CBP}
\begin{aligned}[b]
 T_\beta T_{T^{x_j}_{\gamma_{1j}}(\beta''_j)}^{-1}= & x_j T_\beta T_{\beta'}^{-1}+x_j T_{T^{-1}_{\gamma_{1j}}(\beta')}T_{\beta}^{-1}+ T_\beta T_{\beta''_j}^{-1} \\
= & x_j\big(T_\beta T_{\beta'}^{-1}-T_{\beta}T^{-1}_{T^{-1}_{\gamma_{1j}}(\beta')}\big)+ T_\beta T_{\beta''_j}^{-1}.
\end{aligned}
\end{equation}
Applying recursively Eq. \eqref{eq_dif_CBP} from $j=3$ to $j=g$, we get the following formula:
\begin{equation}
\label{eq_dif_CBP_general}
T_{\beta} T_{(T_{\gamma_{13}}^{x_3}\cdots T_{\gamma_{1g}}^{x_g})(\beta')}^{-1}= T_{\beta} T_{\beta'}^{-1}+\sum^g_{i=3}x_i\big(T_{\beta} T_{\beta'}^{-1}-T_{\beta} T_{T^{-1}_{\gamma_{1i}}(\beta')}^{-1}\big).
\end{equation}

In sequel we prove that for $3\leq k\leq g,$ in the coinvariant module,
$$T_\beta T_{\beta'}^{-1}=T_{\beta}T_{T^{-1}_{\gamma_{1k}}(\beta')}^{-1}.$$
Consider the element $f_k\in \mathcal{AB}_{g,1}$ given by the half twist of the shaded ball depicted in the following figure, that exchanges the holes $3$ and $k.$

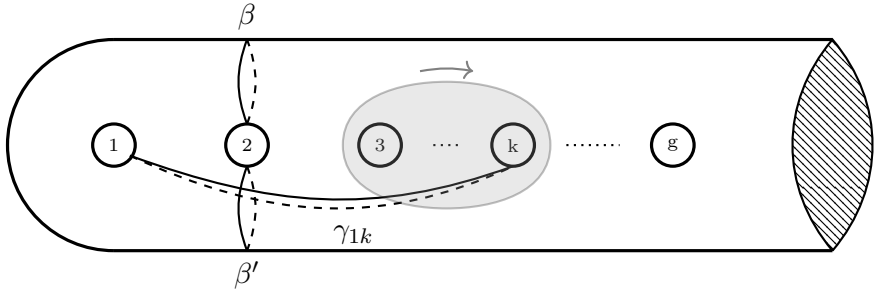
\begin{figure}[H]
\begin{center}
\begin{tikzpicture}[scale=.7]
\draw[very thick] (-2.5,-2) -- (11,-2);
\draw[very thick] (-2.5,2) -- (11,2);
\draw[very thick] (-2.5,2) arc [radius=2, start angle=90, end angle=270];

\draw[very thick] (0,0) circle [radius=.4];
\draw[very thick] (-2.5,0) circle [radius=.4];
\draw[very thick] (2.5,0) circle [radius=.4];
\draw[very thick] (5,0) circle [radius=.4];
\draw[very thick] (8,0) circle [radius=.4];

\draw[thick,dashed] (0,0.4) to [out=70,in=-70] (0,2);
\draw[thick] (0,0.4) to [out=110,in=-110] (0,2);

\draw[thick,dashed] (0,-0.4) to [out=-70,in=70] (0,-2);
\draw[thick] (0,-0.4) to [out=-110,in=110] (0,-2);

\draw[thick] (-2.2,-0.2) to [out=-20,in=200] (5,-0.4);
\draw[thick, dashed] (-2.2,-0.2) to [out=-25,in=205] (5,-0.4);

\draw[thick, dotted] (6,0) -- (7,0);
\draw[thick, dotted] (3.5,0) -- (4,0);

\node at (-2.5,0) {\tiny{1}};
\node at (0,0) {\tiny{2}};
\node at (2.5,0) {\tiny{3}};
\node at (5,0) {\tiny{k}};
\node at (8,0) {\tiny{g}};

\node [above] at (0,2) {$\beta$};
\node [below] at (0,-2) {$\beta'$};

\draw[thick, fill=gray!70, nearly transparent] (1.8,0) to [out=90,in=180] (3.75,1.2) to [out=0,in=90] (5.7,0) to [out=-90,in=0] (3.75,-1.2) to [out=180,in=-90] (1.8,0);

\draw[thick, gray, ->] (3.25,1.4) to [out=10,in=170] (4.25,1.4);

\node at (2,-1.7) {$\gamma_{1k}$};

\draw[thick,pattern=north west lines] (11,-2) to [out=130,in=-130] (11,2) to [out=-50,in=50] (11,-2);

\end{tikzpicture}
\end{center}
\caption{Half twist $f_k$ exchanging holes $3$ and $k$.}
\end{figure}

Since $f_k$ leaves $\beta,$ $\beta'$ invariant and sends $\gamma_{1k}$ to
$\gamma_{13},$ 
for $3\leq k\leq g,$ in the coinvariant module, we have that
\begin{equation}
\label{eq_CBP_igual_3}
\begin{aligned}[b]
T_{\beta}T_{T^{-1}_{\gamma_{1k}}(\beta')}^{-1}= &
T^{-1}_{\gamma_{1k}}T_{\beta}T_{\beta'}^{-1}T_{\gamma_{1k}}=
f_kT^{-1}_{\gamma_{1k}}f_k^{-1}T_{\beta}T_{\beta'}^{-1}f_kT_{\gamma_{1k}}f_k^{-1}= \\
= & T^{-1}_{\gamma_{13}}T_{\beta}T_{\beta'}^{-1}T_{\gamma_{13}}= T_{\beta}T_{T^{-1}_{\gamma_{13}}(\beta')}^{-1}.
\end{aligned}
\end{equation}
Therefore it is enough to show that in the coinvariant module,
$$T_\beta T_{\beta'}^{-1}=T_{\beta}T_{T^{-1}_{\gamma_{13}}(\beta')}^{-1}.$$
Since $\beta_1,$ $\beta_3$ are disjoint with $\beta,$ $\beta',$ $\gamma_{13},$ we have that 
\begin{equation*}
T_{\beta}T^{-1}_{T_{\gamma_{13}}(\beta')}=T_{\gamma_{13}}T_{\beta}T_{\beta'}^{-1}T_{\gamma_{13}}^{-1}=(T_{\beta_1}^{-1}T_{\gamma_{13}}T_{\beta_3}^{-1})T_{\beta}T_{\beta'}^{-1}(T_{\beta_1}^{-1}T_{\gamma_{13}}T_{\beta_3}^{-1})^{-1}.
\end{equation*}
Now take $f\in \mathcal{AB}_{g,1}$ given by $f=T_{\alpha_3}T_{\eta_{34}}^{-1}T_{\beta_{4}},$
where $\alpha_3,$ $\eta_{34},$ $\beta_{4}$ are the curves on $\Sigma_{g,1}$ given in the following picture:
\begin{figure}[H]
\begin{center}
\begin{tikzpicture}[scale=.7]
\draw[very thick] (-2.5,-2) -- (11,-2);
\draw[very thick] (-2.5,2) -- (11,2);
\draw[very thick] (-2.5,2) arc [radius=2, start angle=90, end angle=270];

\draw[very thick] (0,0) circle [radius=.4];
\draw[very thick] (-2.5,0) circle [radius=.4];
\draw[very thick] (2.5,0) circle [radius=.4];
\draw[very thick] (5,0) circle [radius=.4];
\draw[very thick] (8,0) circle [radius=.4];

\draw[thick, dotted] (6,0) -- (7,0);

\draw[thick] (2.5,0) circle [radius=.8];
\draw[thick] (2.5,1.2) arc [radius=1.2, start angle=90, end angle=360];
\draw[thick] (3.7,0) to [out=90,in=180] (4.3,0.8);

\draw[thick] (2.5,1.2) -- (4.3,1.2);

\draw[thick] (4.3,0.8) to [out=0,in=150] (4.8,0.3);
\draw[thick] (4.3,1.2) to [out=0,in=210] (4.8,2);
\draw[thick,dashed] (4.8,0.3) to [out=70,in=-70] (4.8,2);

\draw[thick,dashed] (5.2,0.3) to [out=70,in=-70] (5.2,2);
\draw[thick] (5.2,0.3) to [out=110,in=-110] (5.2,2);

\draw[thick,dashed] (0,0.4) to [out=70,in=-70] (0,2);
\draw[thick] (0,0.4) to [out=110,in=-110] (0,2);

\draw[thick,dashed] (0,-0.4) to [out=-70,in=70] (0,-2);
\draw[thick] (0,-0.4) to [out=-110,in=110] (0,-2);

\node [above] at (0,2) {$\beta$};
\node [below] at (0,-2) {$\beta'$};

\node at (3,1.5) {$\eta_{34}$};
\node at (3.4,0.7) {$\alpha_3$};
\node at (5.8,1) {$\beta_{4}$};

\node at (-2.5,0) {\tiny{1}};
\node at (0,0) {\tiny{2}};
\node at (2.5,0) {\tiny{3}};
\node at (5,0) {\tiny{4}};
\node at (8,0) {\tiny{g}};

\draw[thick,pattern=north west lines] (11,-2) to [out=130,in=-130] (11,2) to [out=-50,in=50] (11,-2);

\end{tikzpicture}
\end{center}
\caption{Curves involved in the definition of $f\in \mathcal{AB}_{g,1}$.}
\end{figure}
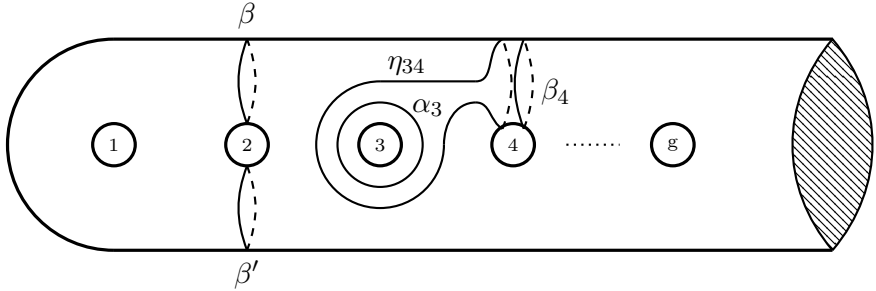
Since $\alpha_3,$ $\eta_{34},$ $\beta_4$ do not intersect either of $\beta,$ $\beta',$ the element $f$ commutes with $T_\beta T^{-1}_{\beta'}$ and in the coinvariant module we have that
\begin{equation}
\label{com-beta13f}
T_{\beta}T^{-1}_{T_{\gamma_{13}}(\beta')}=(f(T_{\beta_1}^{-1}T_{\gamma_{13}}T_{\beta_3}^{-1})f^{-1})T_{\beta}T_{\beta'}^{-1}(f(T_{\beta_1}^{-1}T_{\gamma_{13}}T_{\beta_3}^{-1})f^{-1})^{-1}.
\end{equation}
Observe that
\begin{align*}
 \Psi(f(T_{\beta_1}^{-1}T_{\gamma_{13}}T_{\beta_3}^{-1})f^{-1})= & \Psi(f)\Psi(T_{\beta_1}^{-1}T_{\gamma_{13}}T_{\beta_3}^{-1})\Psi(f^{-1}),\\
\Psi((T_{\beta_1}^{-1}T_{\gamma_{13}}T_{\beta_3}^{-1})(T_{\beta_1}^{-1}T_{\gamma_{14}}T_{\beta_4}^{-1}))= &\Psi(T_{\beta_1}^{-1}T_{\gamma_{13}}T_{\beta_3}^{-1})\Psi(T_{\beta_1}^{-1}T_{\gamma_{14}}T_{\beta_4}^{-1}),
\end{align*}
where
$$\Psi(T_{\beta_1}^{-1}T_{\gamma_{13}}T_{\beta_3}^{-1})=\left(\begin{matrix}
Id & 0 \\
SE_{13} & Id
\end{matrix}\right), \qquad \Psi(T_{\beta_1}^{-1}T_{\gamma_{14}}T_{\beta_4}^{-1})=\left(\begin{matrix}
Id & 0 \\
SE_{14} & Id
\end{matrix}\right),$$
$$\Psi(f)=\Psi(T_{\alpha_3}T_{\eta_{34}}^{-1}T_{\beta_{4}})=\left(\begin{matrix}
Id-E_{34} & 0 \\
0 & Id+E_{43}
\end{matrix}\right),$$
$$\Psi(f^{-1})=\Psi(f)^{-1}=\left(\begin{matrix}
Id+E_{34} & 0 \\
0 & Id-E_{43}
\end{matrix}\right).$$
A direct computation shows that
$$\Psi(f(T_{\beta_1}^{-1}T_{\gamma_{13}}T_{\beta_3}^{-1})f^{-1})= \Psi((T_{\beta_1}^{-1}T_{\gamma_{13}}T_{\beta_3}^{-1})(T_{\beta_1}^{-1}T_{\gamma_{14}}T_{\beta_4}^{-1})).$$
Then, by the short exact sequence \eqref{ses_L}, there is an element $\xi_b\in \mathcal{LTB}_{g,1}$ such that
\begin{equation}
\label{eq_gamma_13,14}
f(T_{\beta_1}^{-1}T_{\gamma_{13}}T_{\beta_3}^{-1})f^{-1}=\xi_b (T_{\beta_1}^{-1}T_{\gamma_{13}}T_{\beta_3}^{-1})(T_{\beta_1}^{-1}T_{\gamma_{14}}T_{\beta_4}^{-1}).
\end{equation}
Since $T_{\beta_1},$ $T_{\beta_3},$ $T_{\beta_4}$ commute with $T_{\gamma_{13}},$ $T_{\gamma_{14}},$ $T_\beta,$ $T_{\beta'},$ and $f$ commutes with $T_\beta,$ $T_{\beta'},$ by the Eqs. \eqref{com-beta13f} and \eqref{eq_gamma_13,14}, in the coinvariant module, we get that 
\begin{equation}
\label{eq_CBP_13_to_1314}
T_{\beta}T^{-1}_{T_{\gamma_{13}}(\beta')}
= (T_{\gamma_{13}}T_{\gamma_{14}})T_{\beta}T_{\beta'}^{-1}(T_{\gamma_{13}}T_{\gamma_{14}})^{-1}=  T_{\beta}T_{T_{\gamma_{13}}T_{\gamma_{14}}(\beta')}^{-1}.
\end{equation}
Notice that we have the following equation:
$$T_{\beta}T_{T_{\gamma_{13}}T_{\gamma_{14}}(\beta')}^{-1}= T_{\beta}T_{\beta'}^{-1}+T_{\beta'} T^{-1}_{T_{\gamma'_{13}}^{-1}(\beta)}+T_{T_{\gamma'_{13}}^{-1}(\beta)}T_{T_{\gamma_{13}}T_{\gamma_{14}}(\beta')}^{-1}.$$
Conjugating the last two terms by $T_{\gamma_{13}}T_{\gamma'_{13}}^{-1}\in \mathcal{AB}_{g,1},$ in the coinvariant module,
$$T_{\beta}T_{T_{\gamma_{13}}T_{\gamma_{14}}(\beta')}^{-1}= T_{\beta}T_{\beta'}^{-1}+T_{T_{\gamma_{13}}^{-1}(\beta')} T^{-1}_{\beta} + T_{\beta} T^{-1}_{T_{\gamma_{14}}(\beta')},$$
and conjugating the last term by $f_4\in\mathcal{AB}_{g,1},$ in the coinvariant module,
\begin{equation}
\label{eq_CBP_1314_beta}
T_{\beta}T_{T_{\gamma_{13}}T_{\gamma_{14}}(\beta')}^{-1}=T_{\beta}T_{\beta'}^{-1}- T_{\beta}T_{T_{\gamma_{13}}^{-1}(\beta')}^{-1} + T_{\beta} T^{-1}_{T_{\gamma_{13}}(\beta')}.
\end{equation}
Hence, by the Eqs. \eqref{eq_CBP_13_to_1314} and
\eqref{eq_CBP_1314_beta},
\begin{equation*}
T_{\beta}T^{-1}_{T_{\gamma_{13}}(\beta')}=T_{\beta}T_{\beta'}^{-1}- T_{\beta}T_{T_{\gamma_{13}}^{-1}(\beta')}^{-1} + T_{\beta} T^{-1}_{T_{\gamma_{13}}(\beta')}.
\end{equation*}
Then in the coinvariant module $T_{\beta}T_{\beta'}^{-1}= T_{\beta}T_{T_{\gamma_{13}}^{-1}(\beta')}^{-1}$ and by the Eqs. \eqref{eq_CBP_igual_3}, \eqref{eq_dif_CBP_general} and \eqref{eq_CBP_product},
$T_\nu T_{\nu'}^{-1}$ is equivalent to $T_{\beta} T_{\beta'}^{-1},$
and hence all CBP-twists of genus one are equivalent.

\;

\textbf{Step 2.} We show that all CBP-twists of genus $1$ are equivalent to zero using Step 1 and the lantern relation.

First of all notice that all CBP-twists of genus one have $2$-torsion since by Step 1, $T_{\nu}T_{\nu'}^{-1}=T_{\nu'}T_{\nu}^{-1}.$ Then, any CBP-twists of genus two is equivalent to zero since by Proposition \ref{prop_CBP_1}, any CBP-twists of genus two is a product of two CBP-twists of genus one.

Consider the following curves in the standardly embedded surface $\Sigma_{g,1}:$

\begin{figure}[H]
\begin{center}
\begin{tikzpicture}[scale=.7]
\draw[very thick] (-4.5,-2) -- (10,-2);
\draw[very thick] (-4.5,2) -- (10,2);
\draw[very thick] (-4.5,2) arc [radius=2, start angle=90, end angle=270];
\draw[very thick] (-4.5,0) circle [radius=.4];
\draw[very thick] (-2,0) circle [radius=.4];

\draw[very thick] (4.5,0) circle [radius=.4];
\draw[very thick] (7,0) circle [radius=.4];

\draw[thick, dashed] (-2,0.4) to [out=0,in=0] (-2,2);
\draw[thick, dashed] (-2,-0.4) to [out=0,in=0] (-2,-2);
\draw[thick, dashed] (-4.5,-0.4) to [out=0,in=0] (-4.5,-2);

\draw[thick] (-4.5,-0.4) to [out=180,in=90] (-5,-1.2);
\draw[<-,thick](-5,-1.2) to [out=-90,in=180] (-4.5,-2);
\draw[thick] (-2,-0.4) to [out=180,in=90] (-2.5,-1.2);
\draw[<-,thick](-2.5,-1.2) to [out=-90,in=180] (-2,-2);
\draw[thick] (-2,0.4) to [out=180,in=-90] (-2.5,1.2);
\draw[<-,thick](-2.5,1.2) to [out=90,in=180] (-2,2);

\draw[thick, dashed] (-4.5,0.4) to [out=30,in=180] (-2,1);
\draw[thick, dashed] (-2,1) to [out=0,in=90] (-0.7,0);
\draw[thick, dashed] (-0.7,0) to [out=-90,in=50] (-1,-2);

\draw[thick] (-4.5,0.4) to [out=0,in=180] (-2,0.7);
\draw[thick] (-2,0.7) to [out=0,in=90] (-1.3,0);
\draw[thick] (-1.3,0) to [out=-90,in=130] (-1,-2);

\draw[thick] (-4.15,-0.2) to [out=-20,in=200] (-2.35,-0.2);
\draw[thick, dashed] (-4.15,-0.2) to [out=20,in=160] (-2.35,-0.2);

\draw[thick, dashed] (-4.15,0.2) to [out=20,in=160] (-2.35,0.2);

\draw[thick] (-4.15,0.2) to [out=-20,in=-90] (-3.5,0.5);
\draw[thick] (-3.5,0.5) to [out=90,in=180] (-2,1.5);
\draw[thick] (-2,1.5) to [out=0,in=180] (-0.5,1.5);
\draw[thick] (-0.5,1.5) to [out=0,in=220] (0,2);
\draw[dashed,thick] (0,2) to [out=-50,in=90] (0.5,0) to [out=-90,in=50] (0,-2);
\draw[thick] (-2.35,0.2) to [out=200,in=-90] (-3,0.5);
\draw[thick] (-3,0.5) to [out=90,in=180] (-2,1.1);
\draw[thick] (-2,1.1) to [out=0,in=180] (-1,1.1);
\draw[thick] (-1,1.1) to [out=0,in=140] (0,-2);

\node [above] at (-2,2) {$\zeta'_1$};
\node [below] at (-2,-2) {$\zeta_1$};
\node [below] at (-1,-2) {$\zeta'_2$};
\node [below] at (-4.5,-2) {$\zeta_2$};
\node [above] at (0,2) {$\zeta'_3$};
\node [below] at (-3.25,-0.3) {$\zeta_3$};

\draw[dashed,thick] (1.5,2) to [out=-50,in=90] (2,0) to [out=-90,in=50] (1.5,-2);
\draw[thick] (1.5,-2) to [out=130,in=-90] (1,0);
\draw[->,thick] (1.5,2) to [out=230,in=90] (1,0);

\node [above] at (1.5,2) {$\gamma$};

\draw[thick] (7,0.4) to [out=180,in=-90] (6.5,1.2);
\draw[<-,thick](6.5,1.2) to [out=90,in=180] (7,2);
\draw[thick] (4.5,-0.4) to [out=180,in=90] (4,-1.2);
\draw[<-,thick](4,-1.2) to [out=-90,in=180] (4.5,-2);
\draw[thick] (4.5,0.4) to [out=180,in=-90] (4,1.2);
\draw[<-,thick](4,1.2) to [out=90,in=180] (4.5,2);

\draw[thick, dashed] (4.5,0.4) to [out=0,in=0] (4.5,2);
\draw[thick, dashed] (4.5,-0.4) to [out=0,in=0] (4.5,-2);
\draw[thick, dashed] (7,0.4) to [out=0,in=0] (7,2);

\draw[thick] (4.85,-0.2) to [out=-20,in=200] (6.65,-0.2);
\draw[thick, dashed] (4.85,-0.2) to [out=20,in=160] (6.65,-0.2);

\draw[thick, dashed] (4.85,0.2) to [out=20,in=160] (6.65,0.2);
\draw[thick] (4.85,0.2) to [out=-20,in=-90] (5.5,0.5);
\draw[thick] (5.5,0.5) to [out=90,in=0] (4.5,1.1);
\draw[thick] (4.5,1.1) to [out=180,in=0] (3,1.1);
\draw[thick] (3,1.1) to [out=180,in=130] (2.8,-2);

\draw[thick] (6.65,0.2) to [out=200,in=-90] (6,0.5);
\draw[thick] (6,0.5) to [out=90,in=0] (4.5,1.5);
\draw[thick] (4.5,1.5) to [out=180,in=0] (2.5,1.5);
\draw[thick] (2.5,1.5) to [out=180,in=230] (2.8,2);
\draw[dashed,thick] (2.8,2) to [out=-50,in=90] (3.3,0) to [out=-90,in=50] (2.8,-2);

\draw[thick] (7,0.4) to [out=150,in=0] (4.5,1);
\draw[thick] (4.5,1) to [out=180,in=90] (3,0);
\draw[thick] (3,0) to [out=-90,in=130] (3.5,-2);

\draw[thick, dashed] (7,0.4) to [out=180,in=0] (4.5,0.7);
\draw[thick, dashed] (4.5,0.7) to [out=180,in=90] (3.8,0);
\draw[thick, dashed] (3.8,0) to [out=-90,in=50] (3.5,-2);

\node [below] at (4.5,-2) {$\xi_1$};
\node [above] at (7,2) {$\xi_2$};
\node [above] at (3,2) {$\xi'_3$};
\node [above] at (4.5,2) {$\xi'_1$};
\node [below] at (3.5,-2) {$\xi'_2$};
\node [below] at (5.75,-0.3) {$\xi_3$};

\draw[thick,pattern=north west lines] (10,-2) to [out=130,in=-130] (10,2) to [out=-50,in=50] (10,-2);

\end{tikzpicture}
\end{center}
\caption{Two lantern configurations embedded in $\Sigma_{g,1}$.}
\end{figure}
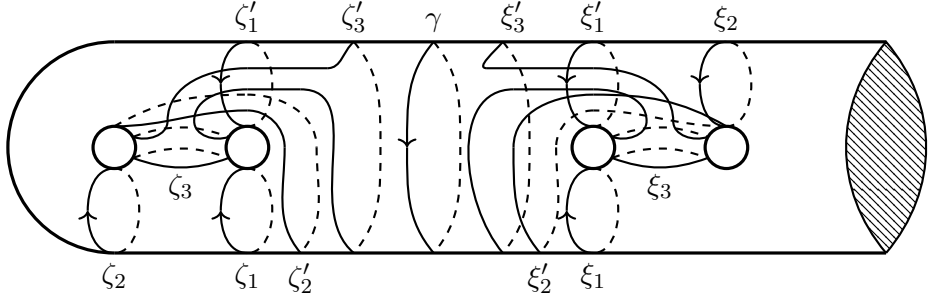

Observe that for $i=1,2,3,$ $T_{\zeta_i}T^{-1}_{\zeta'_i}$ are CBP-twists of genus $1,$ $T_{\xi_i}T^{-1}_{\xi'_i}$ are CBP-twists of genus $2.$
Consider the following lantern relations:
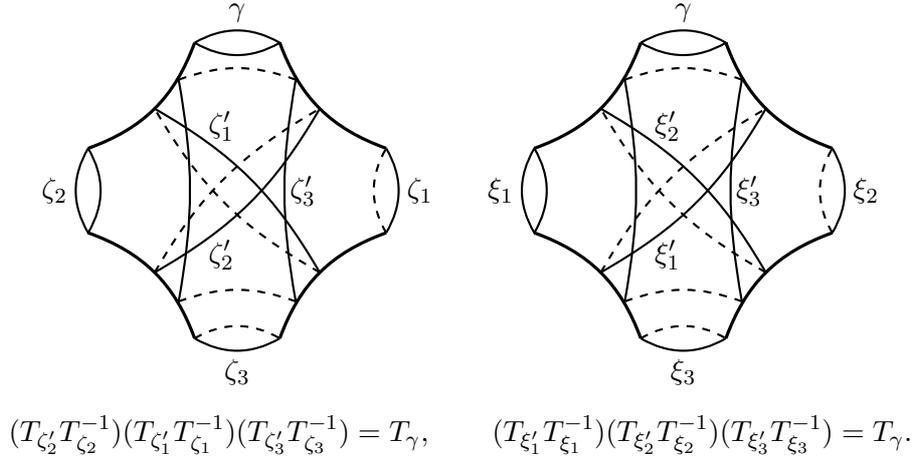
\begin{figure}[H]
\begin{center}
\begin{tikzpicture}[scale=.7]
\draw[very thick] (-2.8,0.8) to [out=20,in=-110] (-0.8,2.8);
\draw[very thick] (2.8,0.8) to [out=160,in=-70] (0.8,2.8);
\draw[very thick] (-2.8,-0.8) to [out=-20,in=110] (-0.8,-2.8);
\draw[very thick] (2.8,-0.8) to [out=-160,in=70] (0.8,-2.8);

\draw[thick] (-2.8,0.8) to [out=-60,in=60] (-2.8,-0.8) to [out=120,in=-120] (-2.8,0.8);
\draw[thick] (-0.8,2.8) to [out=30,in=150] (0.8,2.8) to [out=-150,in=-30] (-0.8,2.8);
\draw[thick] (2.8,0.8) to [out=-60,in=60] (2.8,-0.8);
\draw[thick, dashed] (2.8,-0.8) to [out=120,in=-120] (2.8,0.8);
\draw[thick, dashed] (-0.8,-2.8) to [out=30,in=150] (0.8,-2.8);
\draw[thick] (0.8,-2.8) to [out=-150,in=-30] (-0.8,-2.8);

\draw[thick, dashed] (-1.55,-1.55) to [out=60,in=210] (1.55,1.55);
\draw[thick] (-1.55,-1.55) to [out=30,in=240] (1.55,1.55);

\draw[thick] (1.55,-1.55) to [out=120,in=-30] (-1.55,1.55);
\draw[thick, dashed] (1.55,-1.55) to [out=150,in=-60] (-1.55,1.55);

\draw[thick] (-1.1,2.1) to [out=-80,in=80] (-1.1,-2.1);
\draw[thick] (1.1,2.1) to [out=-100,in=100] (1.1,-2.1);

\draw[thick, dashed] (-1.1,2.1) to [out=20,in=160] (1.1,2.1);
\draw[thick, dashed] (-1.1,-2.1) to [out=20,in=160] (1.1,-2.1);

\node [left] at (-3,0) {$\zeta_2$};
\node [right] at (3,0) {$\zeta_1$};
\node [above] at (0,3) {$\gamma$};
\node [below] at (0,-3) {$\zeta_3$};

\node [right] at (0.8,0) {$\zeta'_3$};
\node [below] at (-0.3,1.7) {$\zeta'_1$};
\node [above] at (-0.3,-1.7) {$\zeta'_2$};

\end{tikzpicture}
\;\;
\begin{tikzpicture}[scale=.7]
\draw[very thick] (-2.8,0.8) to [out=20,in=-110] (-0.8,2.8);
\draw[very thick] (2.8,0.8) to [out=160,in=-70] (0.8,2.8);
\draw[very thick] (-2.8,-0.8) to [out=-20,in=110] (-0.8,-2.8);
\draw[very thick] (2.8,-0.8) to [out=-160,in=70] (0.8,-2.8);

\draw[thick] (-2.8,0.8) to [out=-60,in=60] (-2.8,-0.8) to [out=120,in=-120] (-2.8,0.8);
\draw[thick] (-0.8,2.8) to [out=30,in=150] (0.8,2.8) to [out=-150,in=-30] (-0.8,2.8);
\draw[thick] (2.8,0.8) to [out=-60,in=60] (2.8,-0.8);
\draw[thick, dashed] (2.8,-0.8) to [out=120,in=-120] (2.8,0.8);
\draw[thick, dashed] (-0.8,-2.8) to [out=30,in=150] (0.8,-2.8);
\draw[thick] (0.8,-2.8) to [out=-150,in=-30] (-0.8,-2.8);

\draw[thick, dashed] (-1.55,-1.55) to [out=60,in=210] (1.55,1.55);
\draw[thick] (-1.55,-1.55) to [out=30,in=240] (1.55,1.55);

\draw[thick] (1.55,-1.55) to [out=120,in=-30] (-1.55,1.55);
\draw[thick, dashed] (1.55,-1.55) to [out=150,in=-60] (-1.55,1.55);

\draw[thick] (-1.1,2.1) to [out=-80,in=80] (-1.1,-2.1);
\draw[thick] (1.1,2.1) to [out=-100,in=100] (1.1,-2.1);

\draw[thick, dashed] (-1.1,2.1) to [out=20,in=160] (1.1,2.1);
\draw[thick, dashed] (-1.1,-2.1) to [out=20,in=160] (1.1,-2.1);

\node [left] at (-3,0) {$\xi_1$};
\node [right] at (3,0) {$\xi_2$};
\node [above] at (0,3) {$\gamma$};
\node [below] at (0,-3) {$\xi_3$};

\node [right] at (0.8,0) {$\xi'_3$};
\node [below] at (-0.3,1.7) {$\xi'_2$};
\node [above] at (-0.3,-1.7) {$\xi'_1$};

\end{tikzpicture}
$$(T_{\zeta'_2}T^{-1}_{\zeta_2})(T_{\zeta'_1}T^{-1}_{\zeta_1})(T_{\zeta'_3}T^{-1}_{\zeta_3})=T_\gamma,
\qquad
(T_{\xi'_1}T^{-1}_{\xi_1})(T_{\xi'_2}T^{-1}_{\xi_2})(T_{\xi'_3}T^{-1}_{\xi_3})=T_\gamma.$$
\end{center}
\caption{Lantern configurations with the induced lantern relations.}
\end{figure}
Putting these relations together we get that
$$(T_{\zeta'_1}T^{-1}_{\zeta_1})=(T_{\zeta_2}T^{-1}_{\zeta'_2})(T_{\zeta_3}T^{-1}_{\zeta'_3})(T_{\xi'_1}T^{-1}_{\xi_1})(T_{\xi'_2}T^{-1}_{\xi_2})(T_{\xi'_3}T^{-1}_{\xi_3}).$$
Since all CBP-twists of genus one are equivalent and all CBP-twists of genus two are equivalent to zero, $T_{\zeta'_1}T^{-1}_{\zeta_1}$ is equivalent to zero, and we conclude by Step 1.
\end{proof}

\subsubsection*{Proof of Theorem \ref{teo-roh-intro}}
Let $A$ be an abelian group, recall that we denote by $A_2$ the subgroup of $2$-torsion elements of $A$. For a given integer $g\geq 4,$ consider the BCJ-homomorphism $\sigma: \mathcal{T}_{g,1}\rightarrow \mathfrak{B}_3,$ the projection $\pi_g:\mathfrak{B}_3\rightarrow\mathfrak{B}_0\cong \mathbb{Z}/2$ and the injection $\varepsilon^x:\mathfrak{B}_0\rightarrow A_2$ defined by sending $\overline{1}$ to $x\in A_2.$ By Proposition \ref{prop-Torelli-GL}, composing the pull-back of the aforementioned homomorphisms we get the following sequence of isomorphisms:
$$A_2\xrightarrow[\sim]{(\epsilon^x)^*}Hom (\mathbb{Z}/2,A)\xrightarrow[\sim]{\sigma^*\circ \; \pi^*} Hom(H_1(\mathcal{T}_{g,1};\mathbb{Z})_{\mathcal{AB}_{g,1}},A)=Hom(\mathcal{T}_{g,1},A)^{\mathcal{AB}_{g,1}}.$$
Therefore we get an isomorphism
\begin{align*}
\Lambda: A_2 & \xrightarrow{\;\sim\;} Hom(\mathcal{T}_{g,1},A)^{\mathcal{AB}_{g,1}} \\
x & \longmapsto \mu^x_g= \varepsilon^{x}\circ \pi_g \circ\sigma  .
\end{align*}

We show that the family of homomorphisms $\{\mu_g^x\}_g$ reassemble into the Rohklin invariant.
By the constructions of $\sigma,$ $\pi_g$ and $\varepsilon^x,$ these maps are compatible with the stabilization map and then the compositions of these maps $\{\mu_g^x\}_g$ are also compatible with the stabilization map.
By Lemma \ref{lem_TB,TA}, the $\mathcal{AB}_{g,1}$-invariant homomorphisms $\{\mu_g^x\}_g$ are zero on $\mathcal{TA}_{g,1},$ $\mathcal{TB}_{g,1}.$
Then, by the bijection \eqref{bij_S3}, the family of homomorphism $\{\mu^x_g\}_g$ reassemble into an invariant of integral homology $3$-spheres.
Besides, precomposing the Rohlin invariant $R:\mathcal{S}^3\rightarrow \mathbb{Z}/2$ with the bijection \eqref{bij_S3} we get a family of homomorphisms $\{R_g\}_g$ with $R_g\in Hom(\mathcal{T}_{g,1};\mathbb{Z}/2)^{\mathcal{AB}_{g,1}}.$
Since there is only one non-zero element in $Hom(\mathcal{T}_{g,1};\mathbb{Z}/2)^{\mathcal{AB}_{g,1}},$ by Proposition \ref{prop-Torelli-GL}, $\pi_g\circ \sigma$ and $R_g$ must coincide. Therefore, $\mu_g^x$ and $\varepsilon^x \circ R_g$ must coincide too.

\section{Application}

As we learnt from \cite{pitsch}, for a given invariant of integral homology $3$-spheres $F:\mathcal{S}^3\rightarrow A$ there is an associated family of trivial $2$-cocycles $\{C_g\}_g$ on the Torelli group which measure the failure of the maps $\{F_g\}_g$ to be homomorphisms of groups,
\begin{align*}
C_g: \mathcal{T}_{g,1}\times \mathcal{T}_{g,1} & \longrightarrow A \\
 (\phi,\psi) & \longmapsto F_g(\phi)+F_g(\psi)-F_g(\phi\psi).
\end{align*}
Since $F$ is an invariant, this family of $2$-cocycles inherits the following properties:
\begin{enumerate}[(1)]
\item The $2$-cocycles $\{C_g\}_g$ are compatible with the stabilization map,
\item The $2$-cocycles $\{C_g\}_g$ are invariant under conjugation by elements in $\mathcal{AB}_{g,1},$
\item If $\phi\in \mathcal{TA}_{g,1}$ or $\psi \in \mathcal{TB}_{g,1}$ then $C_g(\phi, \psi)=0.$
\end{enumerate}
In the sequel, we show that for a given family of trivial 2-cocicles satisfying the conditions (1)-(3) and with a $\mathcal{AB}_{g,1}$-invariant trivialization, this family induces $A_2$-valued invariants of integral homology $3$-spheres.

First we show two other ways of expressing the condition about the existence of a $\mathcal{AB}_{g,1}$-invariant trivialization.
Consider $\mathcal{Q}_{C_g}$ the set of all trivializations of the 2-cocycle $C_g:$
$$\mathcal{Q}_{C_g}=\{q:\mathcal{T}_{g,1}\rightarrow A\mid q(\phi)+q(\psi)-q(\phi\psi)=C_g(\phi,\psi)\}.$$
Recall that any two trivializations of a given $2$-cocycle differ by an element of $Hom(\mathcal{T}_{g,1},A).$
As the cocycle $C_g$ is invariant under conjugations by $\mathcal{AB}_{g,1},$ this group acts on $\mathcal{Q}_{C_g}$ via its conjugation action on the Torelli group and it confers the set $\mathcal{Q}_{C_g}$ the structure of an affine set over the abelian group $Hom(\mathcal{T}_{g,1},A).$ Then the existence of an $\mathcal{AB}_{g,1}$-invariant trivialization is equivalent to the existence of a fixed point for the action of $\mathcal{AB}_{g,1} \in \mathcal{Q}_{C_g}.$ This condition can be also seen as a cohomological condition:
choose an arbitrary element $q\in \mathcal{Q}_{C_g}$ and define a map as follows
\begin{align*}
\rho_q:\mathcal{AB}_{g,1} & \longrightarrow Hom(\mathcal{T}_{g,1},A)\\ \phi & \longmapsto \phi \cdot q-q.
\end{align*}
A direct computation shows that $\rho_q$ is a derivation and the difference $\rho_q-\rho_{q'}$ for two elements in $\mathcal{Q}_{C_g}$ is a principal derivation. Therefore there is a well-defined cohomology class
$$\rho(C_g)\in H^1(\mathcal{AB}_{g,1};Hom(\mathcal{T}_{g,1},A))$$
called the torsor of the cocycle $C_g.$

\begin{proposition}
The natural action of $\mathcal{AB}_{g,1}$ on $\mathcal{Q}_{C_g}$ admits a fixed point if and only if the associated torsor $\rho(C_g)$ is trivial.
\end{proposition}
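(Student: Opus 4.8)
The plan is to invoke the standard dictionary between fixed points of an affine action and the vanishing of the associated class in $H^1$. First I would record the two structural facts already set up above: the set $\mathcal{Q}_{C_g}$ is non-empty because $C_g$ is by hypothesis a trivial $2$-cocycle, and it carries the structure of an affine set over $M:=Hom(\mathcal{T}_{g,1},A)$ on which $\mathcal{AB}_{g,1}$ acts compatibly with its linear action on $M$, in the sense that $\phi\cdot(q+m)=\phi\cdot q+\phi\cdot m$ for all $q\in\mathcal{Q}_{C_g}$, $m\in M$ and $\phi\in\mathcal{AB}_{g,1}$ (this uses that $\phi\cdot q$ is again a trivialization of $\phi\cdot C_g=C_g$, and that two trivializations of the same cocycle differ by an element of $M$). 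I fix once and for all a base point $q\in\mathcal{Q}_{C_g}$, so that the derivation $\rho_q(\phi)=\phi\cdot q-q$ represents $\rho(C_g)$.

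For the forward direction, suppose the action of $\mathcal{AB}_{g,1}$ on $\mathcal{Q}_{C_g}$ has a fixed point $p\in\mathcal{Q}_{C_g}$. Then $\rho_p\equiv 0$, and since $\rho(C_g)$ does not depend on the chosen base point (as recalled just before the statement), $\rho(C_g)=[\rho_p]=0$.

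For the converse, assume $\rho(C_g)=0$. Then the derivation $\rho_q$ is principal, i.e.\ there exists $m\in M$ with $\rho_q(\phi)=\phi\cdot m-m$ for every $\phi\in\mathcal{AB}_{g,1}$. Put $p:=q-m$; this again lies in $\mathcal{Q}_{C_g}$ since $\mathcal{Q}_{C_g}$ is affine over $M$. Using compatibility of the action,
\[
\phi\cdot p-p=(\phi\cdot q-\phi\cdot m)-(q-m)=\rho_q(\phi)-(\phi\cdot m-m)=0,
\]
so $p$ is a fixed point, as required.

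I do not expect any real obstacle here: this is the usual torsor/$H^1$ correspondence, and the only points needing care are the non-emptiness of $\mathcal{Q}_{C_g}$ (guaranteed by triviality of $C_g$), the fact that $\mathcal{AB}_{g,1}$ acts on $\mathcal{Q}_{C_g}$ by affine transformations lying over its linear action on $M$, and the base-point independence of $\rho(C_g)$ — all of which are already established above.
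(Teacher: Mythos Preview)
Your proposal is correct and is exactly the standard torsor/$H^1$ argument one would expect here. The paper in fact states this proposition without proof, treating it as a routine consequence of the definitions given just before, so your write-up simply makes explicit what the paper leaves to the reader.
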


Finally, we finish this section with the proof of the main theorem of this paper.

\subsubsection*{Proof of Theorem \ref{teo_gen_tool-intro}}
Suppose that for every $g\geq 4$ there is a fixed point $q_g$ of $\mathcal{Q}_{C_g}$ for the action of $\mathcal{AB}_{g,1}$ on $\mathcal{Q}_{C_g}.$
Since every pair of $\mathcal{AB}_{g,1}$-invariant trivializations differ by an $\mathcal{AB}_{g,1}$-invariant homomorphism, by Theorem \ref{teo-roh-intro} the fixed points are $q_g+\mu_g^x$ with $x\in A_2.$

By construction, the family $\{\mu^x_g\}_g$ is compatible with the stabilization map. Then, given two different fixed points $q_g,$ $q'_g$ of $\mathcal{Q}_{C_g}$ for the action of $\mathcal{AB}_{g,1},$ we have that the following equation holds:
$${q_g|}_{\mathcal{T}_{g-1,1}}-{q'_g|}_{\mathcal{T}_{g-1,1}}=(q_g-q'_g)|_{\mathcal{T}_{g-1,1}} ={\mu_g^x|}_{\mathcal{T}_{g-1,1}}=\mu_{g-1}^x.$$
Therefore the restriction of the trivializations of $\mathcal{Q}_{C_g}$ to $\mathcal{T}_{g-1,1}$ give us a bijection between the fixed points of $\mathcal{Q}_{C_g}$ for the action of $\mathcal{AB}_{g,1}$ and the fixed points of $\mathcal{Q}_{C_{g-1}}$ for the action of $\mathcal{AB}_{g-1,1}.$ Consequently, for a given $\mathcal{AB}_{g,1}$-invariant trivialization $q_g$ and each $x\in A_2,$ we get a well-defined map
$$
q+\mu^x= \lim_{g\to \infty}q_g+\mu^x_g: \lim_{g\to \infty}\mathcal{T}_{g,1}\longrightarrow A.
$$
These are the only candidates to be $A$-valued invariants of integral homology $3$-spheres with associated family of $2$-cocycles $\{C_g\}_g.$ For these maps to be invariants, since they are already $\mathcal{AB}_{g,1}$-invariant, we only have to prove that they are well-defined on the double cosets $\mathcal{TA}_{g,1}\backslash \mathcal{T}_{g,1}/\mathcal{TB}_{g,1}.$
From property (3) of our cocycle we have that $\forall \phi\in \mathcal{T}_{g,1},$
$\forall \psi_a\in \mathcal{TA}_{g,1}$ and $\forall \psi_b\in \mathcal{TB}_{g,1},$
\begin{align}
(q_g+\mu^x_g)(\phi)-(q_g+\mu^x_g)(\phi \psi_b)= & -(q_g+\mu^x_g)(\psi_b) , \label{eq_TB_constant} \\
(q_g+\mu^x_g)(\phi)-(q_g+\mu^x_g)(\psi_a\phi )= & -(q_g+\mu^x_g)(\psi_a). \label{eq_TA_constant}
\end{align}
In particular, taking $\phi\in \mathcal{TB}_{g,1}$ in \eqref{eq_TB_constant} and $\phi\in \mathcal{TA}_{g,1}$ in \eqref{eq_TA_constant}, we get that $q_g+\mu^x_g$ are $\mathcal{AB}_{g,1}$-invariant homomorphisms on $\mathcal{TA}_{g,1}$ and $\mathcal{TB}_{g,1}$ and by Lemma \ref{lem_TB,TA}, the maps $q_g+\mu^x_g$ are zero on these last two groups.
Therefore the maps $q_g+\mu_g^x$ are well-defined on the double coset $\mathcal{TA}_{g,1}\backslash \mathcal{T}_{g,1}/\mathcal{TB}_{g,1}.$ 

\section*{Acknowledgments}
The author would like to express his gratitude to Prof. Wolfgang Pitsch from Universitat Autònoma de Barcelona for his encouragement and helpful advices throughout all this work.
This work was partially supported by MEC grant MTM2016-80439-P.

\pagebreak

\bibliography{biblio}{}
\bibliographystyle{abbrv}

\end{document}